\documentclass[reqno,11pt]{amsart}

\usepackage[latin1]{inputenc} 
\usepackage{listings, tcolorbox}
\textheight 615pt
\textwidth 360pt

\usepackage{amssymb, amsmath, amsthm}
\usepackage[colorlinks=true,linkcolor=blue,citecolor=red]{hyperref}
\usepackage{color}
\usepackage{epsfig}
\usepackage{amsmath}
\usepackage{amssymb}
\usepackage{amscd}
\usepackage{graphicx}
\usepackage{mathrsfs}
\usepackage{enumerate}
\usepackage{tikz}
\usepackage{epstopdf}
\usepackage{subfigure}
\usepackage[T1]{fontenc}
\numberwithin{equation}{section}

   \newtheorem{thm}{Theorem}[section]
   
   \newtheorem{prop}{Proposition}[section]
\newtheorem{rem}[thm]{Remark}

\newtheorem{RHP}[thm]{Riemann-Hilbert Problem}
\newtheorem{lem}[thm]{Lemma}
\newtheorem{cor}[thm]{Corollary}

\usepackage{epsfig}

\setlength{\topmargin}{0.24in} \setlength{\textheight}{7.6in}
\setlength{\textwidth}{6.6in} \setlength{\oddsidemargin}{-0.20in}
\setlength{\evensidemargin}{-0.20in} \setlength{\unitlength}{0.8cm}

\numberwithin{equation}{section}
\numberwithin{prop}{section}
\numberwithin{lemma}{section}
\numberwithin{re}{section}
\numberwithin{coro}{section}

\subjclass[2000]{35Q15, 35Q51, 35C20}

\keywords{Integrable system, The Camassa-Holm equation, Riemann-Hilbert problem, $\bar{\partial}$-steepest descent method, Long-time asymptotic }

\thanks{ Email: sftian@cumt.edu.cn, shoufu2006@126.com (S. F. Tian). }

\begin{document}

\title[On the long-time asymptotic of the CH equation]{On the long-time asymptotic behavior  of the Camassa-Holm equation in space-time solitonic regions}


\author[Li]{Zhi-Qiang Li}

\author[Tian]{Shou-Fu Tian$^{*}$}
\address{Zhi-Qiang Li, Shou-Fu Tian (Corresponding author) and Jin-Jie Yang \newline
School of Mathematics, China University of Mining and Technology, Xuzhou 221116, People's Republic of China}
\thanks{$^{*}$Corresponding author(sftian@cumt.edu.cn, shoufu2006@126.com).
}
\email{sftian@cumt.edu.cn, shoufu2006@126.com (S.F. Tian)}

\author[Yang]{Jin-Jie Yang}

\begin{abstract}
{In this work, we are devoted to study
the Cauchy problem of the Camassa-Holm (CH) equation with  weighted Sobolev initial data in space-time solitonic regions
\begin{align*}
m_t+2\kappa q_x+3qq_x=2q_xq_{xx}+qq_{xx},~~m=q-q_{xx}+\kappa,\\
q(x,0)=q_0(x)\in H^{4,2}(\mathbb R),~~x\in\mathbb R, ~~t>0,
\end{align*}
where $\kappa$ is a positive constant.
Based on the Lax spectrum problem, a Riemann-Hilbert problem corresponding to the original problem is constructed to give  the solution of the CH equation with the initial boundary value condition.
Furthermore, by developing  the $\bar{\partial}$-generalization of Deift-Zhou nonlinear steepest descent method, different long-time asymptotic expansions of the solution $q(x,t)$ are derived. Four asymptotic regions are divided in this work: For $\xi\in\left(-\infty,-\frac{1}{4}\right)\cup(2,\infty)$, the phase function $\theta(z)$ has no stationary point on the jump contour, and the asymptotic approximations can be characterized with the soliton term confirmed by $N(j_0)$-soliton on discrete spectrum  with residual error up to $O(t^{-1+2\tau})$; For $\xi\in\left(-\frac{1}{4},0\right)$ and $\xi\in\left(0,2\right)$,  the phase function $\theta(z)$ has four and two stationary points on the jump contour, and the asymptotic approximations can be characterized with the soliton term confirmed by $N(j_0)$-soliton on discrete spectrum and the $t^{-\frac{1}{2}}$ order term on continuous spectrum  with residual error up to $O(t^{-1})$. Our results also confirm the soliton resolution  conjecture for the CH equation with weighted Sobolev initial data in space-time solitonic regions.
}
\end{abstract}

\maketitle

\tableofcontents

\section{Introduction}

In this work, we study the long-time asymptotic behavior for the initial value problem for the Camassa-Holm (CH) equation:
\begin{align}\label{CH-equation}
m_t+2\kappa q_x+3qq_x=2q_xq_{xx}+qq_{xx},~~m=q-q_{xx}+\kappa,\\
q(x,0)=q_0(x)\in H^{4,2}(\mathbb R),~~x\in\mathbb R, ~~t>0,
\end{align}
where $\kappa$ is a positive constant related to the critical shallow water wave speed, and   $q=q(x,t)$  is the fluid velocity in the $x$ direction.
The CH equation \eqref{CH-equation} is different from the nonlinear Schr\"{o}dinger type equation which have   been extensively researched and has a range of excellent work \cite{Miller-1,Fokas-1,Fokas-2,Tian-PAMS,Tian-JDE,Tian-PA,Wangds-2019-JDE,Yang-Hirota}.
At first, the Camassa and Holm in \cite{CH-Lax} and Camassa, Holm and Hyman in \cite{CH94-AAM} proposed that the equation \eqref{CH-equation} can be applied to describe shallow water waves. It should be pointed out that the CH equation \eqref{CH-equation} has interesting physical meaning. Johson in \cite{Johnson-Proc-London} and Constantin and Lannes in \cite{CH-Constantin-ARMA} studied the Hydrodynamical relevance of the equation \eqref{CH-equation}.
Meanwhile, it has  rich mathematical structure, for example, it is bi-Hamiltonian and completely integrable \cite{CH-Lax,CH-Qiao-CMP}. Therefore, various researches on CH equation \eqref{CH-equation} are reported, including local and global well-posedness of its Cauchy problem, blow-up solutions, soliton-type solutions and other works \cite{CH-Constantin-ARMA-2,CH-Constantin-CPAM,CH-Constantin-AIF}. Moreover, Boutet de Monvel et al. have done many meaningful work on the the equation \eqref{CH-equation}, including the Riemann-Hilbert (RH) problem, long-time asymptotic behavior and Painlev\'{e}-type asymptotic  for the CH equation \cite{Boutet-1,Boutet-2,CH-Longtime,CH-Painleve}.

The aim of  this work is to employ $\bar{\partial}$-steepest descent method to investigate the soliton resolution conjecture of the CH equation \eqref{CH-equation} with the initial value condition
\begin{align*}
  q(x,0)=q_0(x)\in H^{4,2}(\mathbb R),~~x\in\mathbb R, ~~t>0,
\end{align*}
where $H^{k,2}(\mathbb{R})$ are defined in Proposition \ref{H-r-space}.

Inspired by earlier work of  Manakov, who have done significant contribution on the research of the long time asymptotic behavior of nonlinear evolution equations by using using the inverse scattering method \cite{Manakov-1974}, many researchers continuously follow his step  and have done a series of excellent work \cite{Grava-Siam,GM-2020,Kitaev-IP,K2,JDE-CH}. Then,  Zakharov and Shabat \cite{Zakharov-Shabat-1974} did a series work to develop the RH method by combining  inverse scattering method, and the RH method have been widely applied \cite{Boutet-y-1}.
In 1976, Zakharov and Manakov derived the long time asymptotic solutions of NLS equation with decaying initial value \cite{Zakharov-1976}. In 1993, nonlinear steepest descent method  was proposed  by Defit and Zhou   
to systematically study the long time asymptotic behavior of nonlinear evolution equations \cite{Deift-1993}. Over the years, the the nonlinear steepest descent method has been improved and developed . 
A notable  example is that as the initial value is smooth and decays fast enough, the error term is $O(\frac{\ln t}{t})$ shown in \cite{Deift-1994-1, Deift-1994-2}. Furthermore, in 2003, Deift and Zhou \cite{Deift-2003} have shown  that the error term is $O(t^{-(\frac{1}{2}+\iota)})$ for any $0<\iota<\frac{1}{4}$ when the initial value belongs to the weighted Sobolev space.

Recently, combining steepest descent with $\bar{\partial}$-problem, McLaughlin and Miller \cite{McLaughlin-1, McLaughlin-2} came up with a new idea, i.e., $\bar{\partial}$-steepest descent method, to investigate the asymptotic of orthogonal polynomials. Then, this method was successfully used to investigate defocusing NLS equation with finite mass initial data \cite{Dieng-2008} and with finite density initial data \cite{Cuccagna-2016}. 
Notably, compared with the nonlinear steepest descent method, there are some advantages, for example, the delicate estimates involving $L^{p}$ estimates of Cauchy projection operators can be avoided by using $\bar{\partial}$-steepest descent method. Also, the work in \cite{Dieng-2008} shows that the error term is $O(t^{-\frac{3}{4}})$ when the initial value belongs to the weighted Sobolev space. Therefore, a series of great work has been done by applying $\bar{\partial}$-steepest descent method \cite{AIHP,Faneg-2,Miller-2,Jenkins,Jenkins2,Li-CSP,Li-WKI,Fan-mKdV-Dbar-1,Fan-SP-Dbar}.

In this work, our main purpose is to study
the Cauchy problem of the CH equation \eqref{CH-equation} with  weighted Sobolev initial data $q_0(x)\in H^{4,2}(\mathbb R)$  in space-time solitonic regions.
Using a  simple  transformation
\begin{align}
		x=\tilde{x}, \ \
		t=\frac{1}{\kappa} \tilde{t},  \ \
		q(x, t)=\kappa \tilde{q}(\tilde{x}, \tilde{t}),
\end{align}
the CH equation \eqref{CH-equation} becomes
\begin{equation}
\tilde{m}_{\tilde{t}}+2 \tilde{q}_{\tilde{x}} +3\tilde{q}\tilde{q}_{\tilde{x}}= 2\tilde{q}_{\tilde{x}}\tilde{q}_{\tilde{x}\tilde{x}}+\tilde{q}\tilde{q}_{ \tilde{x}\tilde{x}\tilde{x}},~~ \tilde{m}=\tilde{q}-\tilde{q}_{\tilde{x}\tilde{x}}+1.
\end{equation}
So without loss of generally, we fix $\kappa=1$.

In our results, with the weighted Sobolev initial data $q_0(x)\in H^{4,2}(\mathbb R)$, we derive the leading order asymptotic approximation for the CH equation \eqref{CH-equation} (see Theorem \ref{Thm-1} in Section \ref{Asy-appro-CH}):
\begin{itemize}
  \item For $\xi\in\left(-\infty,-\frac{1}{4}\right)\cup(2,\infty)$,
  \begin{align*}
    q(x,t)=q(x,t|D_{j_0})+O(t^{-1+2\tau});
  \end{align*}
  \item For $\xi\in\left(-\frac{1}{4},2\right)$
  \begin{align*}
    q(x,t)=q(x,t|D_{j_0})+f_1t^{-1/2}+O(t^{-1}).
  \end{align*}
\end{itemize}
Compared with the results in \cite{Monvel-CH-Long-time}, our results are different.

%

\noindent \textbf{Organization of the rest of the work}

In Section 2,  based on the Lax pair, the RH problem for $M(z)$ is constructed for the CH equation \eqref{CH-equation}  with initial problem..

In Section 3, in order to obtain a a standard RH problem for $M^{(1)}(z)$ with its jump matrix able to be 
decomposed into two triangle matrices,  according to the oscillation term $\exp(2izy-\frac{t}{4z})$ in RH problem \ref{RH-2} and applying its sign distribution diagram shown in Fig. \ref{fig-1}, we introduce the matrix function $T(z)$.

In Section 4, by using the constructed matrix function $T(z)$, we define  a new   RH problem for $M^{(1)}(z)$  which admits a regular discrete spectrum and two triangular decompositions of the jump matrix.

In Section 5, we make the continuous extension of the jump matrix off the real axis by introducing a matrix function $R^{(2)}(z)$ and get a mixed $\bar{\partial}$-Riemann-Hilbert(RH) problem for $M^{(2)}(z)$.

In Section 6, we decompose the mixed $\bar{\partial}$-RH problem into two parts that are a model RH problem with $\bar{\partial}R^{(2)}=0$ and a pure $\bar{\partial}$-RH problem with $\bar{\partial}R^{(2)}\neq0$,  i.e., $M^{(2)}_{RHP}$ and $M^{(3)}$.

In Sections 7 and 8,  we solve the model RH problem $M^{(2)}_{RHP}$  via an outer model $M^{R}(z)$ for the soliton part and inner model $M^{(loc)}$ near the phase point $\xi_k$ which can be solved by matching  a model problem on a cross when $\xi\in\left(-\frac{1}{4},2\right)$. While, for $\xi\in\left(-\infty,-\frac{1}{4}\right)\cup(2,\infty)$, $M^{(2)}_{RHP}(z)=E(z)M^{R}(z)$.  Also, the error function $E(z)$ with a small-norm RH problem is computed.

In Section 9, the pure $\bar{\partial}$-RH problem for $M^{(3)}$ is studied.

Finally,  we obtain the soliton resolution conjecture and long time asymptotic behavior of the CH equation \eqref{CH-equation}.


\section{The Riemann-Hilbrt problem of CH equation}

This section will  give  the established Riemann-Hilbert problem corresponding the initial value problem for the  CH  equation \eqref{CH-equation} reported by Boutet
 de Monvel, Its and Shepelsky \cite{CH-Longtime,CH-Painleve}.

The Lax representation of the CH equation \eqref{CH-equation} has the following form in \cite{CH-Lax}
\begin{align}\label{Lax-O}
\begin{split}
\psi_{xx}=\frac{1}{4}\psi-\left(z^2+\frac{1}{4}\right)m\psi,\\
\psi_t=\left(-\frac{2}{1+4z^2}-q\right)\psi_x+\frac{1}{2}q_x\psi,
\end{split}
\end{align}
where $\psi:=\psi(x,t,z)$ and $z$ is the spectrum paremeter.

Then, the RH problem corresponding to the initial value problem for the CH equation \eqref{CH-equation} is given as follows.

\begin{prop}\label{Define-previous}
(see \cite{CH-Longtime,CH-Painleve})
Let $r(z)$ and $\mathcal{Z}=\{z_j, \gamma_j\}_{j=1}^{N}(Re z_j=0,~0<Im z_j<\frac{1}{2},~\gamma_j>0)$ be the reflection coefficient and the discrete spectrum with the norming constants $\gamma_j$, respectively, corresponding to the initial data $q_0(x)$ via the scattering problem for the spectrum problem \eqref{Lax-O}.\\
Let
\begin{align*}
  M(y,t;z)=( M_1(y,t;z) M_2(y,t;z))
\end{align*}
be the solution of the following RH problem in the complex $z$-plane, $y\in\mathbb R$ and $t>0$ being parameters.
\begin{RHP}\label{RH-1}
Find an analytic function $M(y,t;z)$ with the following properties:
\begin{itemize}
  \item $M(y,t;z)$ is meromorphic in $\mathbb{C}\setminus\mathbb{R}$ and continuous to real axis;
  \item $M(y,t;-z)=M(y,t;z)\sigma_{1}$ where $\sigma_{1}$ is the standard Pauli matrix which can be expressed as
      \begin{align*}
 \sigma_{1}=\left(
                            \begin{array}{cc}
                              0 & 1 \\
                              1 & 0 \\
                            \end{array}
                          \right),~~\sigma_{2}=\left(
                            \begin{array}{cc}
                              0 & -i \\
                              i & 0 \\
                            \end{array}
                          \right),~~\sigma_{3}=\left(
                            \begin{array}{cc}
                              1 & 0 \\
                              0 & -1 \\
                            \end{array}
                          \right);
\end{align*}
  \item $M_{+}(y,t;z)=M_{-}(y,t;z)V(y,t;z)$,~~~$z\in\mathbb{R}$,
  where \begin{align}\label{J-Matrix}
V(y,t;z)=e^{-iz\left(y-\frac{2t}{1+4z^2}\right)\hat{\sigma}_{3}}\left(\begin{array}{cc}
                   1-|r(z)|^{2} & -\overline{r(z)} \\
                   r(z)& 1
                 \end{array}\right),
\end{align}
where $e^{\hat{\sigma}_{3}}A$ means $e^{\sigma_{3}}Ae^{-\sigma_{3}}$;
  \item $M(y,t;z)\to(1~~1)$ as $z\rightarrow\infty$;
  \item $M(y,t;z)$ possesses simple poles at each points in $\mathcal{Z}\cup\bar{\mathcal{Z}}$ with:
      \begin{align}\label{res-M}
      \begin{split}
\mathop{Res}_{z=z_{j}}M(y,t;z)&=\lim_{z\rightarrow z_{j}}M(y,t;z)\left(\begin{array}{cc}
                   0 & 0 \\
                   i\gamma_{j}e^{2it\theta(z_j)} & 0
                 \end{array}\right),\\
\mathop{Res}_{z=\bar{z}_{j}}M(y,t;z)&=\lim_{z\rightarrow \bar{z}_{j}}M(y,t;z)\left(\begin{array}{cc}
                   0 & -i\gamma_{j}e^{-2it\theta(\bar{z}_j)} \\
                   0 & 0
                 \end{array}\right).
\end{split}
\end{align}
where $\theta(y,t,z)=z\left(\frac{y}{t}-\frac{2}{1+4z^2}\right)$.
\end{itemize}

\end{RHP}

Then, according to the RH problem \ref{RH-1}, the solution $q(x,t)$ of the initial value problem \eqref{CH-equation} can be expressed in a parametric form
\begin{align}
  x(y,t)&=y+\ln\frac{M_1(y,t;\frac{i}{2})}{M_2(y,t;\frac{i}{2})},\label{solution-1}\\
  q(y,t)&=\frac{1}{2i}\lim_{z\to\frac{i}{2}}\left(\frac{M_1(y,t;z)M_2(y,t;z)} {M_1(y,t;\frac{i}{2})M_2(y,t;\frac{i}{2})}-1\right)\frac{1}{z-\frac{i}{2}}. \label{solution-2}
\end{align}
Alternatively,
$$q(y,t)=\frac{\partial}{\partial t}\ln\frac{M_1(y,t;\frac{i}{2})}{M_2(y,t;\frac{i}{2})}.$$
\end{prop}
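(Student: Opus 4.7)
The plan is to follow the classical inverse scattering / Riemann-Hilbert construction for the Camassa-Holm equation, as carried out by Boutet de Monvel, Its and Shepelsky, so the task reduces to verifying each bullet of Proposition \ref{Define-previous} in turn. First I would build Jost solutions of the spatial Lax equation $\psi_{xx}=\tfrac{1}{4}\psi-(z^{2}+\tfrac{1}{4})m\psi$, suitably normalized as $x\to\pm\infty$; since the background form of the equation is $\psi_{xx}=\tfrac{1}{4}\psi$ (because $m\to 1$ under the normalization $\kappa=1$), I would peel off the plane-wave factors $e^{\pm x/2}$ and multiply in the correct time-phase coming from the second Lax equation in \eqref{Lax-O}. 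The crucial device at this point is the passage to the new spatial variable $y$, defined implicitly so that the oscillation in the jump becomes linear in $y$; this is exactly what turns the naive phase into $\theta(y,t,z)=z\bigl(\tfrac{y}{t}-\tfrac{2}{1+4z^{2}}\bigr)$.

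Next I would assemble the $2\times 2$ matrix $M(y,t;z)$ out of the appropriate ratios of Jost solutions dressed by the plane-wave factor $e^{-iz(y-\frac{2t}{1+4z^{2}})\sigma_{3}/2}$ (the familiar dressing that moves all $(y,t)$-dependence into the jump). Meromorphy in $\mathbb{C}\setminus\mathbb{R}$ and continuity up to the real axis follow from analyticity of the Jost functions in the upper and lower half-planes; the jump relation \eqref{J-Matrix} is an algebraic consequence of the scattering relation between the right and left Jost solutions, in which $r(z)$ appears by definition. The symmetry $M(y,t;-z)=M(y,t;z)\sigma_{1}$ would be verified from the $z\mapsto -z$ symmetry built into the spatial Lax equation (it depends only on $z^{2}$), which simply swaps the two Jost columns. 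The normalization $M\to(1~1)$ as $z\to\infty$ follows from the WKB asymptotics of the Jost functions, and the residue conditions \eqref{res-M} are the standard consequence of proportionality between the decaying upper and lower Jost solutions at a bound state $z_{j}$, with proportionality constant $\gamma_{j}$ up to the exponential phase $e^{2it\theta(z_{j})}$; the lower-half-plane residues are obtained by applying the $\sigma_{1}$ symmetry.

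Finally, to recover $q(x,t)$ from $M$, I would expand $M(y,t;z)$ near the distinguished point $z=\tfrac{i}{2}$, where $e^{\pm x/2}$ collides with $e^{\pm iz}$ in the Jost normalization; reading off the leading term of $\ln(M_{1}/M_{2})$ at $z=\tfrac{i}{2}$ produces the change of variables $x=x(y,t)$ in \eqref{solution-1}, while the next-order correction yields the parametric formula \eqref{solution-2} for $q$. The equivalent identity $q=\partial_{t}\ln(M_{1}/M_{2})|_{z=i/2}$ then follows by differentiating the reconstruction formula in $t$ and using the $t$-part of the Lax pair to replace the $y$-derivative of the log by a $t$-derivative.

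The main obstacle, as always for CH, is the coordinate change $x\leftrightarrow y$: one must check that the map $y\mapsto x(y,t)$ defined by \eqref{solution-1} is a genuine diffeomorphism (which ultimately relies on the positivity condition $m>0$ implicit in the direct scattering step), and that the limit in \eqref{solution-2} is finite, i.e., that the apparent singularity at $z=\tfrac{i}{2}$ is removable because of the Wronskian identity relating $M_{1}(\tfrac{i}{2})$ and $M_{2}(\tfrac{i}{2})$. All remaining items in Proposition \ref{Define-previous} are direct verifications once the Jost functions and the scattering data $\{r(z),(z_{j},\gamma_{j})\}$ are in hand.
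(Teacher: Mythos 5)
The paper offers no proof of Proposition \ref{Define-previous} at all: it is imported verbatim from \cite{CH-Longtime,CH-Painleve}, so there is no internal argument to compare against. Your outline follows exactly the construction of those references (Jost solutions of \eqref{Lax-O}, the Liouville-type change of variable $x\mapsto y$ that linearizes the phase into $\theta(y,t,z)$, the scattering relation producing the jump \eqref{J-Matrix}, the $z\mapsto-z$ symmetry from the evenness of the spectral equation in $z$, proportionality of decaying Jost solutions at bound states for \eqref{res-M}, and evaluation at $z=\tfrac{i}{2}$ for \eqref{solution-1}--\eqref{solution-2}), so the route is the right one and is the one the paper implicitly relies on.

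One step of your sketch is, however, stated incorrectly and would derail the construction if taken literally. With $m\to 1$ the spatial equation $\psi_{xx}=\tfrac14\psi-(z^2+\tfrac14)m\psi$ does \emph{not} reduce to $\psi_{xx}=\tfrac14\psi$; it reduces to $\psi_{xx}=-z^2\psi$, so the Jost solutions are normalized by $e^{\pm izx}$ (more precisely $e^{\pm izy(x)}$ after the Liouville substitution $\tilde\psi=m^{1/4}\psi$, $dy=\sqrt{m}\,dx$), not by $e^{\pm x/2}$. The exponentials $e^{\pm x/2}$ enter for an entirely different reason: at the distinguished value $z=\tfrac{i}{2}$ the coefficient $z^2+\tfrac14$ vanishes, so the spectral equation degenerates to $\psi_{xx}=\tfrac14\psi$ \emph{exactly}, independently of $m$, and its explicit solutions $e^{\pm x/2}$ can be compared with the Jost normalization $e^{\pm y/2}$ at that point. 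It is precisely this comparison that produces the shift $x-y=\ln\bigl(M_1(y,t;\tfrac{i}{2})/M_2(y,t;\tfrac{i}{2})\bigr)$ in \eqref{solution-1} and, upon expanding to the next order in $z-\tfrac{i}{2}$, the formula \eqref{solution-2}. You should also note that the finiteness of the limit in \eqref{solution-2} needs only analyticity of $M$ at $z=\tfrac{i}{2}$ (guaranteed since $\mathrm{Im}\,z_j<\tfrac12$ strictly) together with $M_1(\tfrac{i}{2})M_2(\tfrac{i}{2})\neq 0$, rather than a Wronskian identity; your remaining caveat about $y\mapsto x(y,t)$ being a diffeomorphism (via $m>0$) is the genuine one and is handled in the cited references.
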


\begin{rem}
On the basis of the Zhou's vanishing lemma \cite{vanishing-lemma}, the existence of the solutions of RHP \ref{RH-1} for $x\in\mathbb{R},t>0$ is guaranteed. According to the results of Liouville's theorem, we know that if the solution of RH problem \ref{RH-1} exists, it is unique.
\end{rem}

\begin{prop}\label{H-r-space}
If the initial data $q_{0}(x)\in H^{4,2}(\mathbb R)$, then $r(z)\in H^{1,1}(\mathbb{R})$. The space $H^{4,2}(\mathbb R)$ is defined as
\begin{align*}
  H^{k,2}(\mathbb{R})=\left\{f(x)\in L^{p}(\mathbb{R}):x^{2}\partial^{j}f(x)\in L^{p}(\mathbb{R}), j=1,2,\ldots,k\right\}.
\end{align*}
\end{prop}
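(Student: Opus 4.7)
The plan is to examine the direct scattering transform associated with the $x$-part of the Lax pair \eqref{Lax-O} and to trace how the regularity and decay of $q_0(x)$ are transferred, through the Jost solutions, to the reflection coefficient $r(z)$. Setting $m_0(x) = q_0(x) - q_{0,xx}(x) + 1$, the hypothesis $q_0 \in H^{4,2}(\R)$ implies in particular that $m_0-1$ inherits two weighted $L^2$ derivatives. First I would recast the scalar spectral equation
\begin{equation*}
\psi_{xx} = \tfrac{1}{4}\psi - \bigl(z^2+\tfrac{1}{4}\bigr) m_0 \psi
\end{equation*}
as a first-order AKNS-type system $\Phi_x = \bigl(iz\sigma_3 + Q_0(x)\bigr)\Phi$, using a Liouville-type rescaling of the independent variable ($y = \int^x \sqrt{m_0}\, ds$) together with a diagonalizing gauge, so that $r(z)$ is realized as an entry of the scattering matrix of a standard system whose off-diagonal potential $Q_0$ inherits weighted Sobolev bounds from $m_0-1$.

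Next I would construct the normalized Jost solutions $\Phi_\pm(x,z)$ as the unique fixed points of the associated Volterra integral equations, obtaining uniform estimates in $x$ for $z\in\R$ by a standard contraction argument. From the Wronskian identities on the real axis, $r(z)$ admits an integral representation of the form
\begin{equation*}
r(z) = \frac{1}{a(z)} \int_{\R} e^{-2izy}\bigl[ Q_0(y)\Phi_+(y,z)\bigr]_{12}\, dy ,
\end{equation*}
modulo a gauge-dependent prefactor, so that to leading order in $z$ the reflection coefficient is essentially the Fourier transform of the potential $Q_0$. The Plancherel-type correspondence then suggests $r\in H^{1,1}(\R)$ as soon as $Q_0$ lies in a weighted Sobolev space with one derivative and one weight, which the hypothesis easily ensures.

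To promote this heuristic to a rigorous bound, I would split $\Phi_+$ as $e^{izy\sigma_3}$ plus a Volterra remainder whose size is controlled by $\|Q_0\|_{L^1}$, and estimate the two contributions to $r(z)$ separately. The factor of $z$ in $zr(z)\in L^2$ is absorbed by an integration by parts against $e^{-2izy}$, which spends one derivative of $Q_0$; the derivative $r'(z)$ in turn produces a factor of $y$ inside the integrand, which is absorbed by the weight carried by $Q_0$. Finally, one must verify that the scattering coefficient $a(z)$ is bounded away from zero on $\R$, which follows from the standard absence of spectral singularities under our decay assumptions on $q_0$, so that division by $a(z)$ does not destroy the $H^{1,1}$ estimate.

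The main obstacle is the derivative accounting dictated by the specific form of the CH Lax pair. The gauge/Liouville change of variables does not directly conjugate multiplication by $m_0$ to an AKNS potential; rather, $Q_0$ is an expression built from antiderivatives of $m_0-1$ and from $q_0 q_{0,xx}$, so extracting the claimed regularity requires chasing each such term and showing that the apparently generous exponents $(k,l)=(4,2)$ are consumed by (i) the two derivatives hidden in $m_0$, (ii) the one derivative traded for the $z$-factor, and (iii) the one weight traded for the $z$-derivative, with just enough regularity left over to control $1/a(z)$ uniformly on $\R$. This bookkeeping, rather than any deep new estimate, is the genuinely technical part.
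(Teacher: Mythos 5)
The paper does not actually prove this proposition: it is stated without argument, and the only justification offered anywhere in the text is the later remark that the condition $q_{0}\in H^{4,2}(\mathbb R)$ is imposed ``so that the inverse scattering transform possesses well mapping properties,'' with a citation to Zhou's $L^{2}$-Sobolev bijectivity paper. Your strategy --- Liouville/gauge transformation of the CH spectral problem to an AKNS-type system, Jost solutions via Volterra equations, a Fourier-type representation of $r(z)$, and the standard trade of one derivative of the potential for the weight $z\,r(z)\in L^{2}$ and one weight of the potential for $r'(z)\in L^{2}$ --- is exactly the machinery that underlies the cited result, so in spirit you are reconstructing the proof the authors chose to outsource.

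That said, as written your proposal is a roadmap rather than a proof, and two of the steps you wave through are genuinely not automatic. First, the change of variables $y=\int^{x}\sqrt{m_{0}}\,ds$ requires $m_{0}=q_{0}-q_{0,xx}+1>0$, which does not follow from $q_{0}\in H^{4,2}(\mathbb R)$ alone; this positivity is a separate standing hypothesis of the CH inverse scattering theory (it is what rules out wave breaking) and must be stated, since without it the entire AKNS reduction collapses. Second, your claim that $a(z)$ is bounded away from zero on $\mathbb R$ ``by the standard absence of spectral singularities'' is the weakest link: for the CH spectral problem $a$ can vanish at the edge of the continuous spectrum for non-generic data, and decay of $q_{0}$ does not by itself exclude this, so either a genericity assumption or an argument specific to $m_{0}>0$ is needed before dividing by $a(z)$. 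Finally, the derivative bookkeeping you defer is where the stated exponents $(4,2)\mapsto(1,1)$ must actually be verified --- in particular the Liouville change of variables and the $(z^{2}+\tfrac14)$ dependence of the potential both consume regularity in ways your three-item accounting does not yet track --- so the proposal should not be regarded as complete until those estimates are carried out.
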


\section{Conjugation}\label{Conjugation}

According to the jump matrix \eqref{J-Matrix}, we know that  the oscillation term  is
\begin{align}\label{4.1}
e^{2iz\left(y-\frac{2t}{1+4z^2}\right)}=e^{2it\theta(z)},~~ \theta(z)=z\left(\frac{y}{t}-\frac{2}{1+4z^2}\right).
\end{align}

It is obvious to observe that the growth and decay of the exponential function $e^{2it\theta(z)}$ have great effect on the long-time asymptotic of RH problem \ref{RH-1}. Therefore, we introduce a transformation $M(z)\rightarrow M^{(1)}(z)$ to make the $M^{(1)}(z)$ is well behaved as $|t|\rightarrow \infty$ along any characteristic line.

Let $\xi=\frac{y}{t}$, the stationary phase points, i.e., $\theta'(z)=0$, are given by $\pm z_0$ and $\pm z_1$ where
\begin{align*}
  z_0=\frac{1}{2}\sqrt{-\frac{\xi+1-\sqrt{1+4\xi}}{\xi}},\\
  z_1=\frac{1}{2}\sqrt{-\frac{\xi+1+\sqrt{1+4\xi}}{\xi}}.
\end{align*}
In order to study the asymptotic behavior of $e^{2it\theta(z)}$ as $t\to\infty$, we evaluate the real part of $2it\theta(z)$:
\begin{align}\label{real-theta}  Rez(i\theta)=-Imz\xi-\frac{16Rez^2Imz-2Imz(1+4Rez^2-4Imz^2)}{(1+4Rez^2-4Imz^2)^2 +(8RezImz)^2}.
\end{align}
The properties of $Re(i\theta)$ are shown in Fig. \ref{fig-1}

\begin{figure}
 \centering
  \includegraphics[width=6.6cm,height=6.2cm,angle=0]{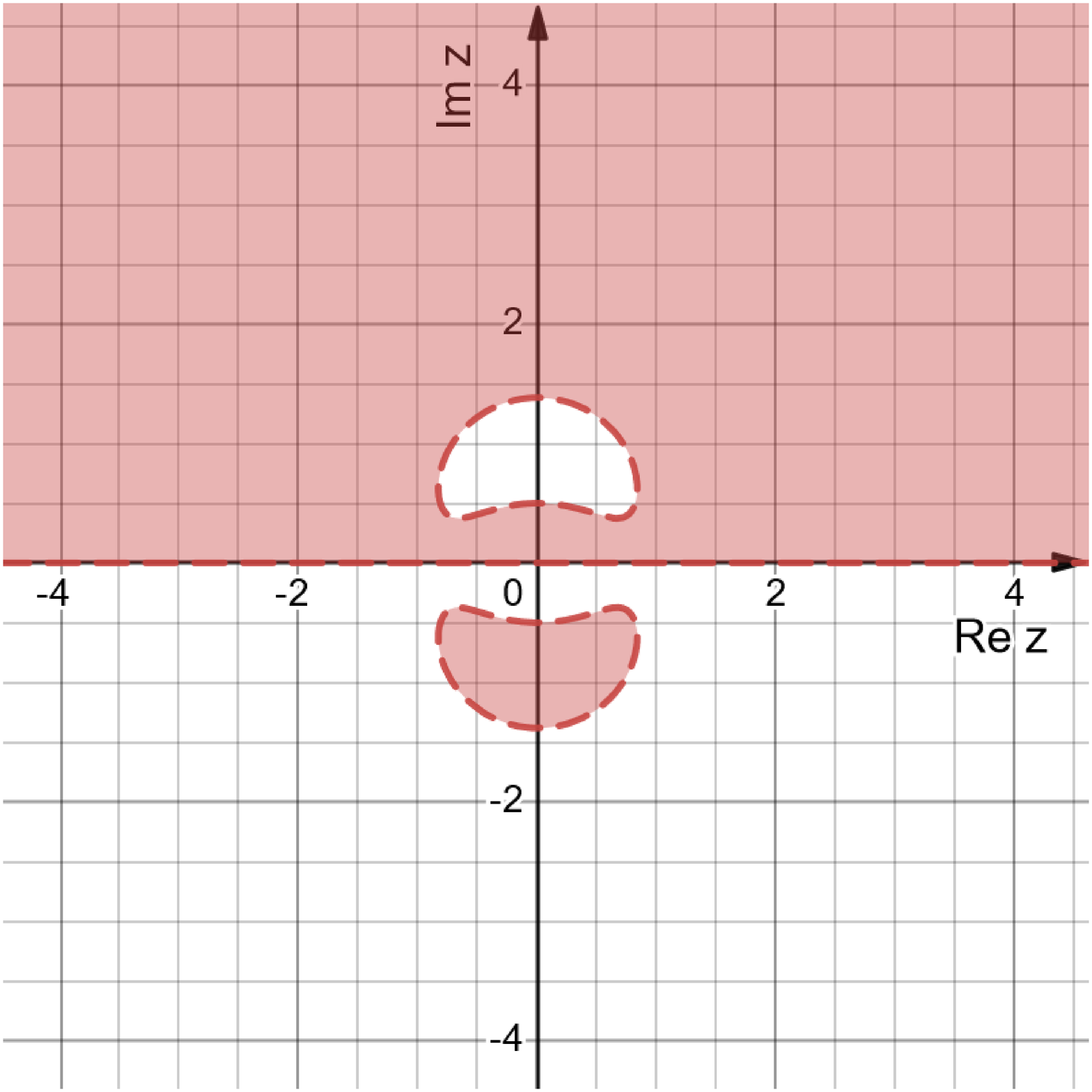}
  \includegraphics[width=6.6cm,height=6.2cm,angle=0]{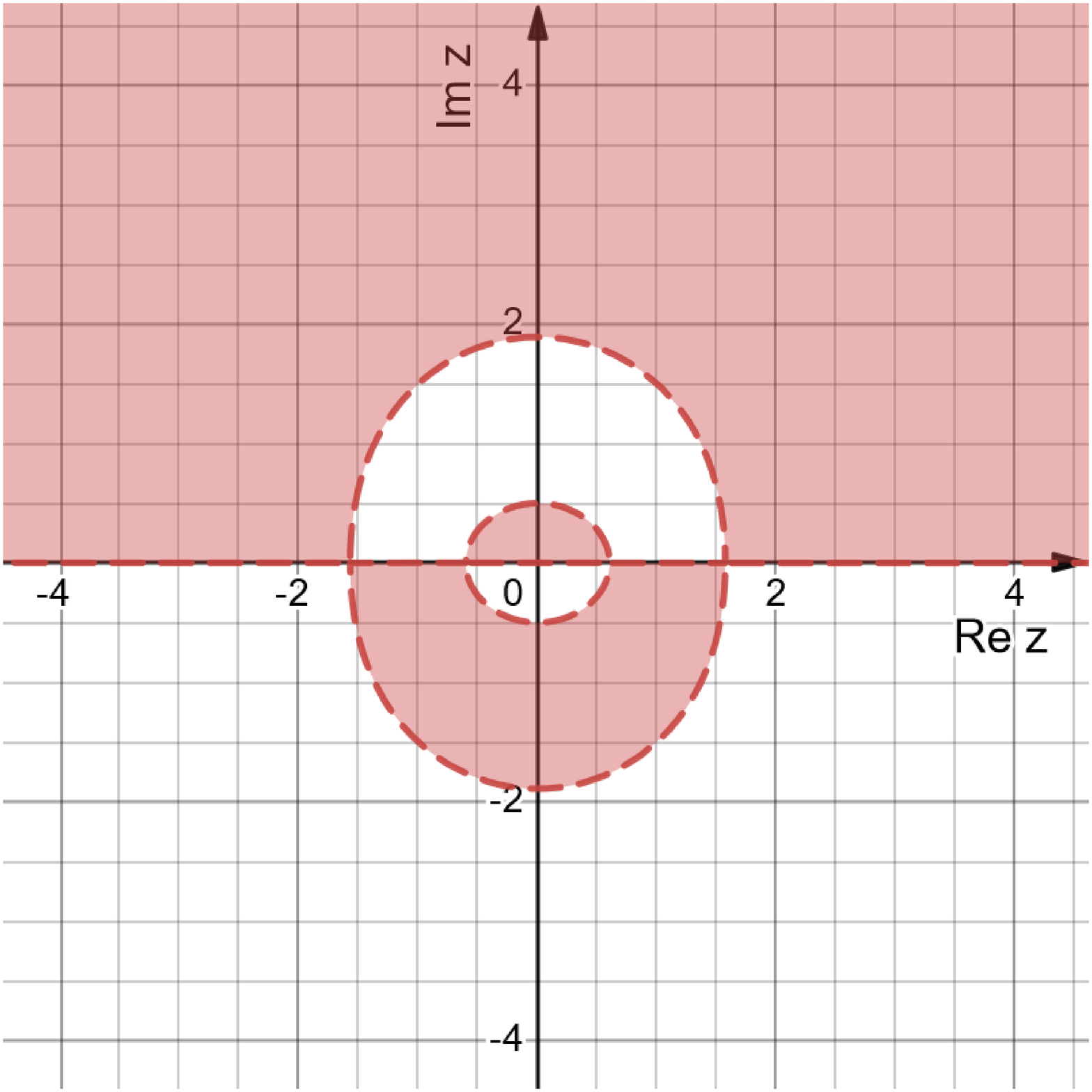}

  (\textbf{$a$})\qquad  \qquad\qquad\qquad\qquad\qquad\qquad\qquad\qquad~~~(\textbf{$b$})\\

  \includegraphics[width=6.6cm,height=6.2cm,angle=0]{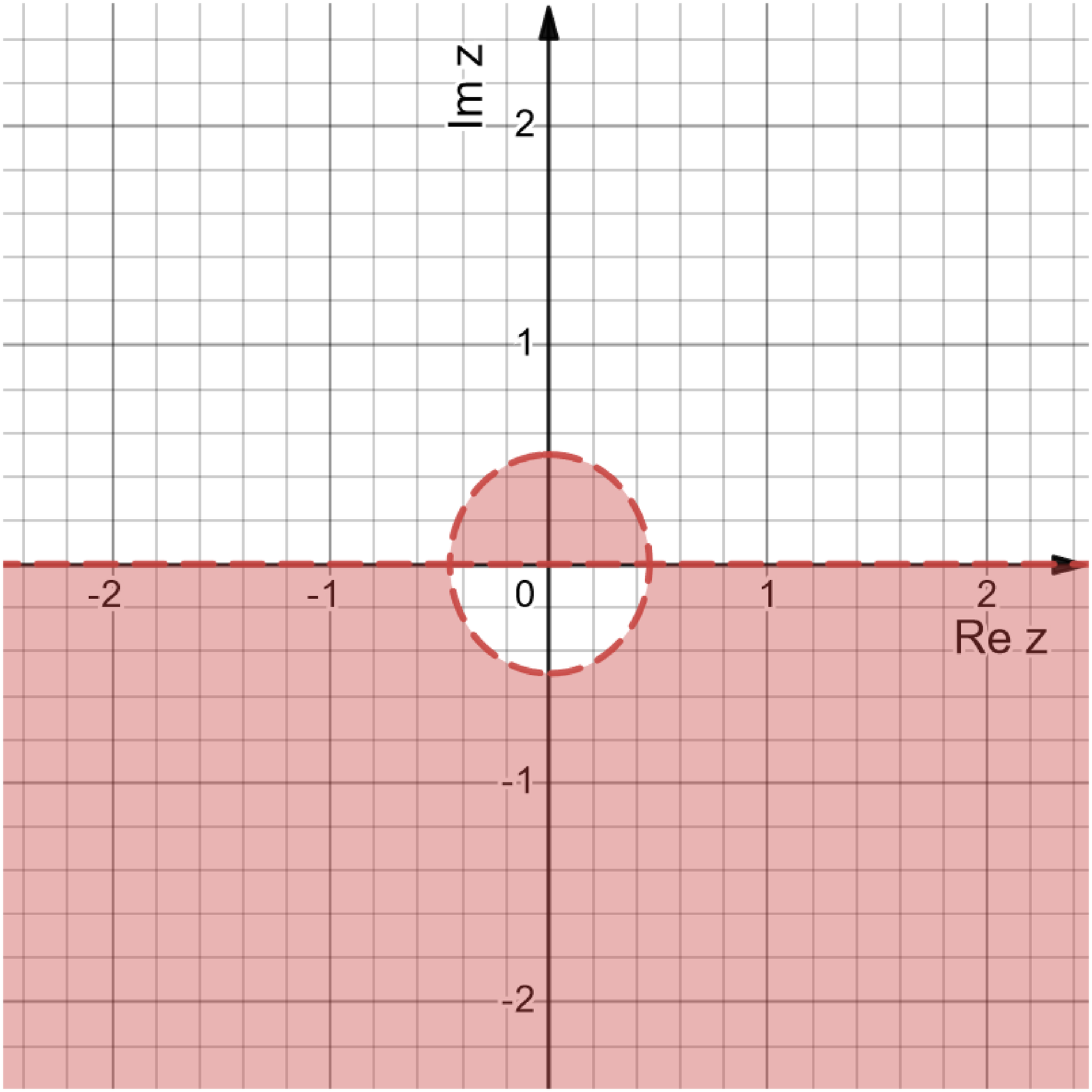}
  \includegraphics[width=6.6cm,height=6.2cm,angle=0]{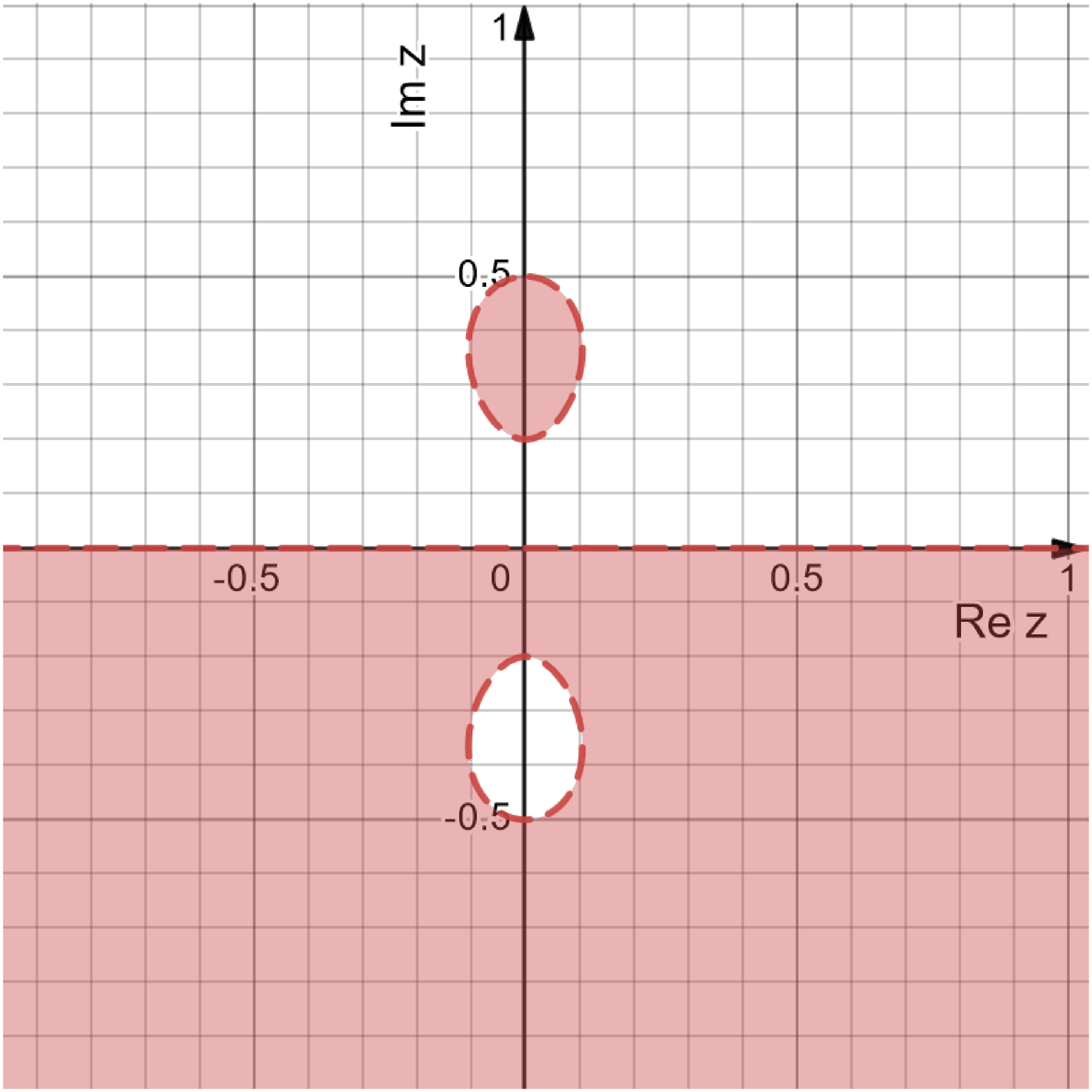}

  (\textbf{$c$})\qquad  \qquad\qquad\qquad\qquad\qquad\qquad\qquad\qquad~~~(\textbf{$d$})

  \caption{\small (Color online) The classification of sign $Re(2i\theta(z))$.  In the pink regions, $Re(2i\theta(z))>0$, which
implies that $e^{-2it\theta(z)}\to0$ as $t\to\infty$. While in the white regions, $Re(2i\theta(z))<0$, which implies $e^{2it\theta(z)}\to0$ as $t\to\infty$. The red curves $Re(2i\theta(z))=0$ are critical lines between decay and growth regions.}\label{fig-1}
\end{figure}

Therefore, there are four cases are distinguished as
\begin{enumerate}[(i)]
  \item For $\xi>2$, $z_0,z_1\notin\mathbb R$ corresponding to Fig. \ref{fig-1}($d$);
  \item For $0<\xi<2$, $z_0\in \mathbb R, z_1\notin\mathbb R$ corresponding to Fig. \ref{fig-1}($c$);
  \item For $-\frac{1}{4}<\xi<0$, $z_0,z_1\in\mathbb R$ corresponding to Fig. \ref{fig-1}($b$);
  \item For $\xi<-\frac{1}{4}$, $z_0,z_1\notin\mathbb R$ corresponding to Fig. \ref{fig-1}($a$).
\end{enumerate}

Next, for convenience, we introduce some notations.
Let $\xi_1=z_1$, $\xi_2=z_0$, $\xi_3=-z_0$, $\xi_4=-z_1$.
Define
\begin{align*}
 I(\xi)=\left\{\begin{aligned}
&\mathbb R, &\xi\in(-\infty,-\frac{1}{4}),\\
&(-\infty,\xi_4)\cup(\xi_3,\xi_2)\cup(\xi_4,\infty),&\xi\in(-\frac{1}{4},0),\\
&(\xi_3,\xi_2),&\xi\in(0,2),\\
&\emptyset,&\xi\in( 2,\infty).
\end{aligned}\right.
\end{align*}
Denote
\begin{align*}
 n(\xi)=\left\{\begin{aligned}
&0, &\xi\in(-\infty,-\frac{1}{4})\cup( 2,\infty),\\
&2,&\xi\in(-\frac{1}{4},0),\\
&4,&\xi\in(0,2).
\end{aligned}\right.
\end{align*}
Define
\begin{align}
 \rho=\frac{1}{3}\min_{z_j\in\mathcal{Z},j\neq k}\{z_j-z_k\},\label{rho}\\
 \mathcal{N}=\{1,2,\ldots,N\},\label{N-1-N}
\end{align}
\begin{align}\label{j-0}
j_0=j_0(\xi)=\left\{\begin{aligned}
&j,~~ if ~|\frac{1}{2}-Imz_j|<\rho,~~or~~|\kappa_0-Imz_j|<\rho, ~j\in\{1,2,\ldots,N\},~~for~~ \xi\in(2,\infty),\\
&j,~~ if ~|\frac{1}{2}-Imz_j|<\rho, ~j\in\{1,2,\ldots,N\},~~for~~ \xi\in(-\infty, 2),\\
&-1,~~ otherwise.
\end{aligned}\right.
\end{align}

It is noted that the discrete spectrum points $z_j(j=1,2,\ldots,N)$ are distribute in $i(0,\frac{1}{2})$, which means the discrete spectrum points $z_j(j=1,2,\ldots,N)$ are distribute in the positive region $Re(2i\theta(z))>0$ for $\xi<2$. For $\xi>2$, a part of discrete spectrum points is distribute in the positive region $Re(2i\theta(z))>0$ and another part is distribute in the negative region $Re(2i\theta(z))<0$.

Therefore, we further  define functions
\begin{align}\label{delta-v-define}
\delta(z):&=\delta(z,\xi)=\exp\left(i\int_{I(\xi)}\frac{\nu(s)}{s-z}\,ds\right), ~~\nu(s)=-\frac{1}{2\pi}\log(1+|r(s)|^{2}),
\end{align}
\begin{align}\label{T-define}
T(z):&=T(z,\xi)=
\left\{\begin{aligned}
&\prod_{k=1}^{N}\frac{(z-\bar{z}_{k})} {(z-z_{k})}\delta(z),&\xi<2,\\
&\prod_{\kappa_0<Imz_k<\frac{1}{2}}\frac{(z-\bar{z}_{k})} {(z-z_{k})}\delta(z), &\xi>2.
\end{aligned}\right.
\end{align}
where $\kappa_0=\frac{1}{2}\sqrt{1-\frac{2}{\xi}}$.

Then, we show   the properties of the function $T(z)$.

\begin{prop}\label{T-property}
The functions $T(z)$ satisfies that\\
($a$) $T(z)$  is meromorphic in $C\setminus\mathbb{R}$; \\
($b$) For $z\in C\setminus\mathbb{R}$, $\bar{T}(\bar{z})=T(-z)$;\\
($c$) For $z\in C\setminus I(\xi)$, $T^{-1}(z)=T(-z)$;\\
($d$) For $z\in I(\xi)$, the boundary values $T_{\pm}$  satisfies that
\begin{align}\label{T-jump}
\frac{T_{+}(z)}{T_{-}(z)}=1-|r(z)|^{2}, z\in I(\xi);
\end{align}
($e$) $T(\infty):=\lim\limits_{z\rightarrow\infty}T(z)=1$;\\
($f$) As $z\rightarrow\frac{i}{2}$, $T(z)$ can be  expressed as
\begin{align}\label{4.10}
T(z)=T(\frac{i}{2})(\mathbb I+(z-\frac{i}{2})T_{1})+O((z-\frac{i}{2})^{2}),
\end{align}
where $T_{1}=2i\sum\frac{2iIm~z_{j}}{\frac{1}{4}-Imz^2_{j}} -\frac{1}{2\pi i}\int_{I(\xi)}\frac{\log(1-|r(s)|^2}{(s-z)^{2}}\,ds$;\\
($g$) As $z\rightarrow \xi_j(j=1,2,3,4)$ along any ray $\xi_j+e^{i\phi}R_{+}$ with $|\phi|\leq c<\pi$,
\begin{align}\label{T-estimate}
  \left|T(z,\xi)-T_j(\xi)(z-\xi_j)^{i\nu(\xi_j)}\right| \lesssim||r||_{H^{1,1}(\mathbb R)}|z-\xi_j|^{\frac{1}{2}},
\end{align}
where
\begin{align}
T_j(\xi_j)=\prod\frac{\xi_j+iImz_j}{\xi_j-iImz_j}e^{i\beta(\xi_j,\xi_j)},
\end{align}
with
\begin{align}\label{3-6}
\beta(z,\xi_j)=\nu(\xi_j)\log(z-\xi_j+1)+\int_{I(\xi)}\frac{\nu(s)}{s-z}\,ds.
\end{align}
\end{prop}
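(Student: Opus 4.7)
The plan is to verify parts (a)--(g) mostly by direct manipulation of the explicit formulas (\ref{delta-v-define})--(\ref{T-define}), with (g) as the only substantive step. For (a), the finite Blaschke-type factor in $T$ is meromorphic on all of $\mathbb{C}$ with simple poles at $z_k$ and simple zeros at $\bar z_k$ (both off $\mathbb{R}$ since $0<\mathrm{Im}\,z_k<1/2$), and $\delta(z)$, being the exponential of a Cauchy-type integral over $I(\xi)\subset \mathbb{R}$, is analytic on $\mathbb{C}\setminus I(\xi)\supset\mathbb{C}\setminus\mathbb{R}$. For (b), I would use that $\mathrm{Re}\,z_k=0$ gives $\bar z_k=-z_k$, which makes the rational factor invariant under $z\mapsto-\bar z$ followed by complex conjugation; for $\delta$, the symmetry $I(\xi)=-I(\xi)$ together with the evenness of $\nu$ (inherited from the symmetry $|r(-s)|=|r(s)|$ of the reflection coefficient) yields $\overline{\delta(\bar z)}=\delta(-z)$. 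For (c), the product $T(z)T(-z)$ collapses each pair of rational factors to $(z^2-\bar z_k^2)/(z^2-z_k^2)=1$ using $\bar z_k^2=z_k^2$, while the combined $\delta$ exponent reduces to $i\int_{I(\xi)}2s\nu(s)/(s^2-z^2)\,ds$, which vanishes by the odd symmetry of its integrand. Part (d) is the Plemelj--Sokhotski jump formula applied to the Cauchy integral defining $\log\delta$, and (e) is the obvious limit $z\to\infty$ in each factor.

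For (f), I would take the logarithmic derivative
\[
\frac{T'(z)}{T(z)}=\sum_k\left(\frac{1}{z-\bar z_k}-\frac{1}{z-z_k}\right)+i\int_{I(\xi)}\frac{\nu(s)}{(s-z)^2}\,ds,
\]
evaluate at $z=i/2$ using $z_k=i\,\mathrm{Im}\,z_k$ and $\bar z_k=-i\,\mathrm{Im}\,z_k$ to combine each rational pair into $2i\,\mathrm{Im}\,z_j/(1/4-\mathrm{Im}\,z_j^2)$, and substitute the explicit $\nu$ to match the integral appearing in (\ref{4.10}). The stated $O((z-i/2)^2)$ error is then Taylor's theorem applied to $\log T$ near $z=i/2$.

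The substantive step is (g). At a real stationary point $\xi_j$ I would split the $\delta$ exponent as
\[
i\int_{I(\xi)}\frac{\nu(s)}{s-z}\,ds = i\nu(\xi_j)\int_{I(\xi)}\frac{ds}{s-z}+i\int_{I(\xi)}\frac{\nu(s)-\nu(\xi_j)}{s-z}\,ds.
\]
The first integral is elementary: it is a sum of logarithms of the endpoints of $I(\xi)$, and exponentiation produces the singular factor $(z-\xi_j)^{i\nu(\xi_j)}$ together with a smooth non-vanishing piece whose value at $\xi_j$ realizes the $e^{i\beta(\xi_j,\xi_j)}$ factor from (\ref{3-6}). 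The rational factor of $T$ is analytic and non-vanishing at the real point $\xi_j$ (its poles and zeros are purely imaginary) with value $\prod(\xi_j+i\,\mathrm{Im}\,z_j)/(\xi_j-i\,\mathrm{Im}\,z_j)$, which together with the above identifies $T_j(\xi)$. The second integral is the one that must be controlled: one must show it defines a function that is H\"older-$1/2$ at $\xi_j$. Since $r\in H^{1,1}(\mathbb{R})$ by Proposition \ref{H-r-space}, the chain rule gives $\nu\in H^1(\mathbb{R})$ and hence $\nu\in C^{1/2}(\mathbb{R})$ by Sobolev embedding; standard Cauchy-integral estimates then produce the $\|r\|_{H^{1,1}}|z-\xi_j|^{1/2}$ bound required by (\ref{T-estimate}).

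The main obstacle is keeping the branch of $(z-\xi_j)^{i\nu(\xi_j)}$ consistent uniformly on the sector $|\arg(z-\xi_j)|\le c<\pi$ and correctly tracking the orientation of $I(\xi)$ near each $\xi_j$: the stationary points are endpoints of distinct components of $I(\xi)$ for $\xi\in(-1/4,0)$ but bracket the single interval $(\xi_3,\xi_2)$ for $\xi\in(0,2)$, and the branch cut of the logarithm must be chosen consistently with these orientations so that the constants multiplying $T_j(\xi)$ emerge with the correct signs. The analogous bookkeeping for $\xi>2$ is already absorbed into the restricted Blaschke product in the second line of (\ref{T-define}); since (g) concerns real stationary points, only $\xi\in(-1/4,2)$ is relevant and the other regimes require no modification.
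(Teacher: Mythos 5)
Your proposal is correct and follows essentially the same route as the paper, which offers no argument of its own beyond ``direct calculation'' with a pointer to \cite{AIHP,Li-cgNLS}: those references establish the analogous proposition exactly as you do, via the Blaschke-factor/Cauchy-integral structure, Plemelj for the jump, the logarithmic derivative for the expansion at $z=\tfrac{i}{2}$, and the split of the exponent at $\xi_j$ combined with the H\"older-$\tfrac12$ regularity of $\nu$ inherited from $r\in H^{1,1}(\mathbb R)$ for the estimate \eqref{T-estimate}. Your closing caution about subtracting $\nu(\xi_j)$ only on the component of $I(\xi)$ adjacent to $\xi_j$ (so the remainder integral converges on the unbounded components) and about branch/orientation bookkeeping is precisely the detail the cited proofs handle.
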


\begin{proof}
The above properties of $T(z)$ can be proved by a direct calculation, for details, see \cite{AIHP,Li-cgNLS}.
\end{proof}

\section{Deformation of Riemann-Hilbert problem}\label{Deformation of RH problem}

In order to trade the poles for jumps on small contours encircling the pole $z_j(j\neq j_0)$ (see Fig. \ref{Fig-V-1}) and deform   the Riemann-Hilbert problem \ref{RH-1}, through employing the function $T(z)$, we introduce the following transformation: for $\xi\in(-\infty,2)$ and $j\neq j_0$,
\begin{align}\label{Trans-1}
M^{(1)}(y,t,z)=\left\{\begin{aligned}
      &M(y,t,z)\left(
        \begin{array}{cc}
      1 & -\frac{z-iImz_{j}}{i\gamma_{j}e^{2it\theta(iImz_{j})}} \\
      0 & 1 \\
        \end{array}
      \right)T(z)^{-\sigma_{3}}, ~~&|z-iImz_{j}|<\rho,\\
   &M(y,t,z)\left(
        \begin{array}{cc}
      1 & 0 \\
      \frac{z+iImz_{j}}{i\gamma_{j}e^{-2it\theta(-iImz_{j})}} & 1 \\
        \end{array}
      \right)T(z)^{-\sigma_{3}}, ~~&|z+iImz_{j}|<\rho,\\
   &M(y,t,z)T(z)^{-\sigma_{3}}, \qquad &elsewhere,
   \end{aligned}\right.
\end{align}
for $\xi\in(2,\infty)$  and $j\neq j_0$,
\begin{align}\label{Trans-1+1}
M^{(1)}(y,t,z)=\left\{\begin{aligned}
      &M(y,t,z)\left(
        \begin{array}{cc}
      1 & -\frac{z-iImz_{j}}{i\gamma_{j}e^{2it\theta(iImz_{j})}} \\
      0 & 1 \\
        \end{array}
      \right)T(z)^{-\sigma_{3}}, ~~&|z-iImz_{j}|<\rho,~~\kappa_0<Imz_j<\frac{1}{2},\\
   &M(y,t,z)\left(
        \begin{array}{cc}
      1 & 0 \\
      -\frac{-i\gamma_{j}e^{2it\theta(iImz_{j})}}{z-iImz_{j}} & 1 \\
        \end{array}
      \right)T(z)^{-\sigma_{3}}, ~~&|z-iImz_{j}|<\rho,~~0<Imz_j<\kappa_0,\\
   &M(y,t,z)\left(
        \begin{array}{cc}
      1 & 0 \\
      \frac{z+iImz_{j}}{i\gamma_{j}e^{-2it\theta(-iImz_{j})}} & 1 \\
        \end{array}
      \right)T(z)^{-\sigma_{3}}, ~~&|z+iImz_{j}|<\rho,~~\kappa_0<Imz_j<\frac{1}{2},\\
      &M(y,t,z)\left(
        \begin{array}{cc}
      1 & \frac{i\gamma_{j}e^{-2it\theta(-iImz_{j})}}{z+iImz_{j}} \\
      0 & 1 \\
        \end{array}
      \right)T(z)^{-\sigma_{3}}, ~~&|z+iImz_{j}|<\rho,~~0<Imz_j<\kappa_0,\\
   &M(y,t,z)T(z)^{-\sigma_{3}}, \qquad &elsewhere.
   \end{aligned}\right.
\end{align}

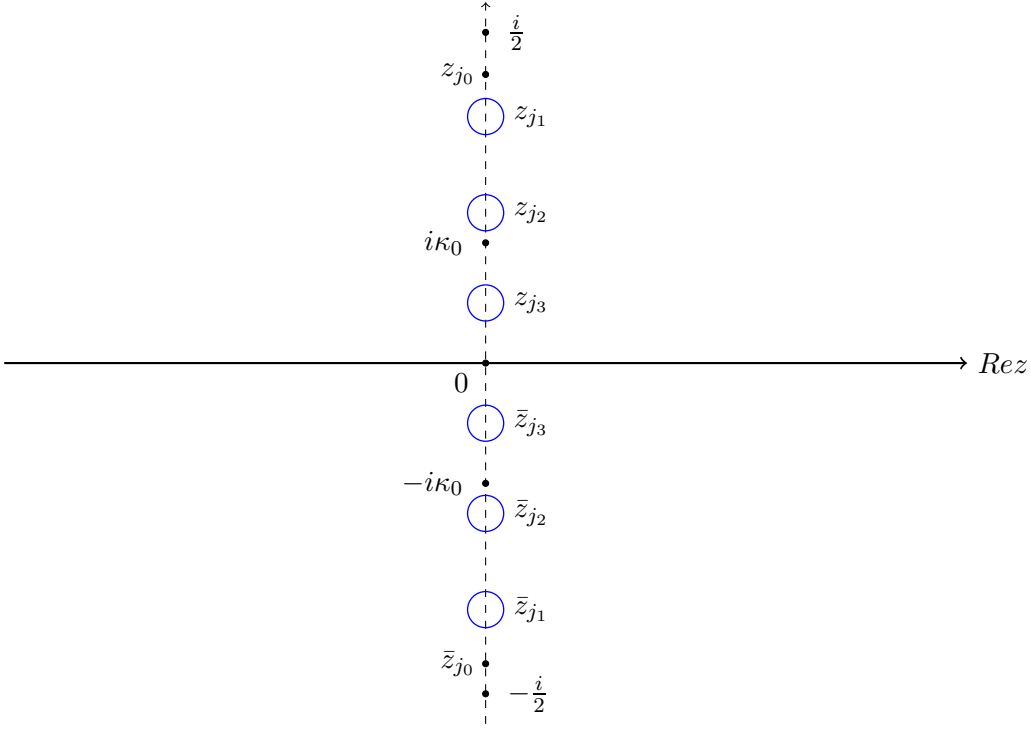
\begin{figure}
\centerline{\begin{tikzpicture}[scale=0.8]
\draw[->][thick](-8,0)--(8,0)node[right]{$Rez$};
\draw[fill] (0,0) circle [radius=0.05];
\draw[fill] (-0.4,0)node[below]{$0$} ;
\draw[->][dashed](0,-6)--(0,6);
\draw[fill] (0,5.5) circle [radius=0.05];
\draw[fill] (0.2,5.5)node[right]{$\frac{i}{2}$};
\draw[fill] (0,2) circle [radius=0.05];
\draw[fill] (-0.2,2)node[left]{$i\kappa_0$};
\draw[fill] (0,-5.5) circle [radius=0.05];
\draw[fill] (0.2,-5.5)node[right]{$-\frac{i}{2}$};
\draw[fill] (0,-2) circle [radius=0.05];
\draw[fill] (-0.2,-2)node[left]{$-i\kappa_0$};
\draw[fill] (0,4.8) circle [radius=0.05]node[left]{$z_{j_0}$};
\draw(0,4.1) [blue, line width=0.5] circle(0.3);
\draw[fill] (0.3,4.1)node[right]{$z_{j_1}$};
\draw(0,2.5) [blue, line width=0.5] circle(0.3);
\draw[fill] (0.3,2.5)node[right]{$z_{j_2}$};
\draw(0,1) [blue, line width=0.5] circle(0.3);
\draw[fill] (0.3,1)node[right]{$z_{j_3}$};
\draw[fill] (0,-5) circle [radius=0.05]node[left]{$\bar{z}_{j_0}$};
\draw(0,-4.1) [blue, line width=0.5] circle(0.3);
\draw[fill] (0.3,-4.1)node[right]{$\bar{z}_{j_1}$};
\draw(0,-2.5) [blue, line width=0.5] circle(0.3);
\draw[fill] (0.3,-2.5)node[right]{$\bar{z}_{j_2}$};
\draw(0,-1) [blue, line width=0.5] circle(0.3);
\draw[fill] (0.3,-1)node[right]{$\bar{z}_{j_3}$};
\end{tikzpicture}}
\caption{\small (Color online) The contour of the regular RH problem.}\label{Fig-V-1}
\end{figure}

Then, we obtain a matrix RH problem for $M^{(1)}(z)$.
\begin{RHP}\label{RH-2}
Find a matrix function $M^{(1)}(y,t,z)$ with the following properties:
\begin{itemize}
  \item $M^{(1)}(y,t,z)$ is meromorphic in $\mathbb{C}\setminus \Sigma^{(\xi,1)}$,
  where\begin{align*}
  \Sigma^{(\xi,1)}=\mathbb R\cup\left(\bigcup_{j\in\{1,2,\ldots,N\},j\neq j_0} \{z\in\mathbb C:|z-iImz_j|=\rho~or~|z+iImz_j|=\rho\}\right);
       \end{align*}
  \item $M^{(1)}(y,t;-z)=M^{(1)}(y,t;z)\sigma_{1}$;
  \item $M^{(1)}(y,t;z)\to(1~~1)$ as $z\rightarrow \infty$;
  \item For $z\in \Sigma^{(\xi,1)} $, the boundary values $M^{(1)}_{\pm}(y,t,z)$ satisfy the jump relationship $M^{(1)}_{+}(y,t,z)=M^{(1)}_{-}(y,t,z)V^{(1)}(z)$, where for $\xi\in(-\infty,2)$,
      \begin{align}\label{RH-2-V1}
       V^{(1)}(z)=\left\{\begin{aligned}
       &\left(
    \begin{array}{cc}
    1 &  -\overline{r(z)}T(z)^{2}e^{-2it\theta(z)} \\
    0 & 1 \\
     \end{array}
   \right)\left(
    \begin{array}{cc}
    1 & 0 \\
    r(z)T(z)^{-2}e^{2it\theta(z)} & 1 \\
   \end{array}
  \right),~~z\in\mathbb R \setminus I(\xi)\\
   &\left(
    \begin{array}{cc}
    1 & 0 \\
    \frac{r(z)T_{-}(z)^{-2}}{1-|r(z)|^2}e^{2it\theta(z)} & 1 \\
     \end{array}
   \right)\left(
    \begin{array}{cc}
    1 & \frac{-\overline{r(z)}T_{+}(z)^{2}}{1-|r(z)|^2}e^{-2it\theta(z)} \\
    0 & 1 \\
   \end{array}
  \right),~~z\in I(\xi),\\
   &\left(
        \begin{array}{cc}
      1 & -\frac{z-iImz_{j}}{i\gamma_{j}e^{2it\theta(iImz_{j})}}T(z)^{2} \\
      0 & 1 \\
        \end{array}
      \right), ~~|z-iImz_{j}|= \rho,~ j\neq j_0,\\
   &\left(
        \begin{array}{cc}
      1 & 0 \\
      -\frac{z+iImz_{j}}{i\gamma_{j}e^{-2it\theta(-iImz_{j})}}T(z)^{-2} & 1 \\
        \end{array}
      \right),~~|z+iImz_{j}|= \rho,~ j\neq j_0,
   \end{aligned}\right.
  \end{align}
  for $\xi\in(2,\infty)$,
  \begin{align}\label{RH-2-V1+1}
       V^{(1)}(z)=\left\{\begin{aligned}
       &\left(
    \begin{array}{cc}
    1 &  -\overline{r(z)}T(z)^{2}e^{-2it\theta(z)} \\
    0 & 1 \\
     \end{array}
   \right)\left(
    \begin{array}{cc}
    1 & 0 \\
    r(z)T(z)^{-2}e^{2it\theta(z)} & 1 \\
   \end{array}
  \right),~~z\in\mathbb R\\
   &\left(
        \begin{array}{cc}
      1 & -\frac{z-iImz_{j}}{i\gamma_{j}e^{2it\theta(iImz_{j})}}T(z)^{2} \\
      0 & 1 \\
        \end{array}
      \right), ~~|z-iImz_{j}|= \rho,~~\kappa_0<Imz_j<\frac{1}{2},~ j\neq j_0,\\
    &\left(
        \begin{array}{cc}
      1 & 0 \\
      -\frac{i\gamma_{j}e^{2it\theta(iImz_{j})}}{z-iImz_{j}}T(z)^{-2} & 1 \\
        \end{array}
      \right), ~~|z-iImz_{j}|= \rho,~~0<Imz_j<\kappa_0,~ j\neq j_0,\\
   &\left(
        \begin{array}{cc}
      1 & 0 \\
      -\frac{z+iImz_{j}}{i\gamma_{j}e^{-2it\theta(-iImz_{j})}}T(z)^{-2} & 1 \\
        \end{array}
      \right), ~~|z+iImz_{j}|= \rho,~~\kappa_0<Imz_j<\frac{1}{2},~ j\neq j_0,\\
      &\left(
        \begin{array}{cc}
      1 & -\frac{i\gamma_{j}e^{-2it\theta(-iImz_{j})}}{z+iImz_{j}}T(z)^{2} \\
      0 & 1 \\
        \end{array}
      \right), ~~|z+iImz_{j}|= \rho,~~0<Imz_j<\kappa_0,~ j\neq j_0;
   \end{aligned}\right.
  \end{align}
   \item  $M^{(1)}(y,t,z)$ has simple poles at each $z_{j}\in \mathcal{Z}(j=j_0)$ at which for $\xi\in(-\infty,2)$,
\begin{align}\label{ResM1}
\begin{split}
\mathop{Res}\limits_{z=z_{j_0}}M^{(1)}(y,t,z)=\lim_{z\rightarrow z_{j_0}}M^{(1)}(y,t,z)\left(\begin{array}{cc}
    0 & i\gamma_{j_0}e^{-2it\theta(z_{j_0})}\frac{1}{T}'(z_{j_0})^{-2} \\
    0 & 0 \\
  \end{array}
\right),\\
\mathop{Res}\limits_{z=\bar{z}_{j_0}}M^{(1)}(y,t,z)=\lim_{z\rightarrow \bar{z}_{j_0}}M^{(1)}(y,t,z)\left(\begin{array}{cc}
    0 & 0 \\
    i\gamma^{-1}_{j_0}e^{2it\theta(\bar{z}_{j_0})}T'(\bar{z}_{j_0})^{-2} & 0 \\
  \end{array}
\right);
\end{split}
\end{align}
For   $\xi\in(2,\infty)$, if $\kappa_0<Imz_{j_0}<\frac{1}{2}$, the residue conditions are the same with \eqref{ResM1}; if $0<Imz_{j_0}<\kappa_0$, the residue conditions are that
\begin{align}\label{ResM1+1}
\begin{split}
\mathop{Res}\limits_{z=z_{j_0}}M^{(1)}(y,t,z)=\lim_{z\rightarrow z_{j_0}}M^{(1)}(y,t,z)\left(\begin{array}{cc}
    0 & 0 \\
    i\gamma_{j_0}e^{2it\theta(z_{j_0})}T^{-2}(z_{j_0}) & 0 \\
  \end{array}
\right),\\
\mathop{Res}\limits_{z=\bar{z}_{j_0}}M^{(1)}(y,t,z)=\lim_{z\rightarrow \bar{z}_{j_0}}M^{(1)}(y,t,z)\left(\begin{array}{cc}
    0 & i\gamma_{j_0}e^{-2it\theta(\bar{z}_{j_0})}T^{2}(\bar{z}_{j_0}) \\
    0 & 0 \\
  \end{array}
\right).
\end{split}
\end{align}
\end{itemize}
\end{RHP}

\begin{proof}
According to the definition of $M^{(1)}(y,t,z)$, applying Proposition \ref{T-property}, the analyticity, jump matrix and asymptotic behavior of $M^{(1)}$ are derived directly. Moreover, the residue condition of $M^{(1)}(y,t,z)$ can be derived by a similar calculation.
\end{proof}

Then, according to the RH problem \ref{RH-2}, the solution $q(x,t)$ of the initial value problem \eqref{CH-equation} can be expressed in a parametric form
\begin{align}\label{1-solution-1}
\begin{split}
  x(y,t)&=y+\ln\frac{M^{(1)}_1(y,t;\frac{i}{2})} {M^{(1)}_2(y,t;\frac{i}{2})},\\
  q(y,t)&=\frac{1}{2i}\lim_{z\to\frac{1}{2}}\left(\frac{M^{(1)}_1(y,t;z) M^{(1)}_2(y,t;z)}{M^{(1)}_1(y,t;\frac{i}{2})M^{(1)}_2(y,t;\frac{i}{2})}-1\right) \frac{1}{z-\frac{i}{2}}.
\end{split}
\end{align}

\section{The $\bar{\partial}$ extensions of jump factorization}\label{Opening-lenses}
The goal of this section  is to  removing the jump from the real axis $\mathbb R$ in new lines along which the $e^{2it\theta(z)}$ is decay/growth for $z\notin \mathbb{R}$. To approach this goal, we first introduce some regions and contours with respect to $\xi$.

Fix a angle $\theta_0$ (small enough)  such that the set $\{z\in\mathbb{C}: ~\cos\theta_0<\big|\frac{Rez}{z}\big|\}$ does not intersect the boundary $Re(i\theta(z))=0$.
Define
\begin{align*}
\phi=\min\{\theta_{0},~\frac{\pi}{4}\}.
\end{align*}
\begin{itemize}
  \item For $\xi\in(-\infty,-\frac{1}{4})\cup(2,\infty)$, following the idea in \cite{CH-Longtime,three-wave-Yang}, let $\varepsilon_0$ be an sufficiently small positive constant such that the circles $\{|z-z_j|=\rho, |z-\bar{z}_j|=\rho\}$ around $z_j$ and $\bar{z}_j$ lie outside the region $\{z||Imz|<\varepsilon_0\}$.
  Then, we define
  $\Omega(\xi)=\bigcup_{k=1}^{4}\Omega_{k}$ where $\Omega_{k}=\Omega_{k0}\cup\Omega_{k1}$ (see Fig. \ref{Fig-2})  with
\begin{align*}
&\Omega_{10}=\{z: Rez>0,0<Imz<\varepsilon_0\},~~\Omega_{11}=\{z: \arg(z-i\varepsilon_0)\in(0,\phi)\}, \\
&\Omega_{20}=\{z: Rez<0,0<Imz<\varepsilon_0\},~~\Omega_{21}=\{z: \arg(z-i\varepsilon_0)\in(\pi-\phi,\pi)\},\\
&\Omega_{30}=\{z: Rez>0,-\varepsilon_0<Imz<0\},~~\Omega_{31}=\{z: \arg(z+i\varepsilon_0)\in(-\pi,-\pi+\phi)\}, \\
&\Omega_{40}=\{z: Rez<0,-\varepsilon_0<Imz<0\},~~\Omega_{41}=\{z: \arg(z+i\varepsilon_0)\in(-\phi,0)\}.
\end{align*}
Denote
\begin{align}\label{S-Sigma}
\begin{split}
\Sigma_{1}=i\varepsilon_0+e^{i\phi}\mathbb{R}_{+},~~~~~~\Sigma_{2}=i\varepsilon_0+e^{i(\pi-\phi)}\mathbb{R}_{+},\\
\Sigma_{3}=-i\varepsilon_0+e^{-i(\pi-\phi)}\mathbb{R}_{+},~~\Sigma_{4}=-i\varepsilon_0+e^{-i\phi}\mathbb{R}_{+}.
\end{split}
\end{align}
  Define $$\tilde{\Sigma}(\xi)=\Sigma_{1}\cup\Sigma_{2}\cup\Sigma_{3}\cup\Sigma_{4},$$
  $$\Sigma^{(\xi,2)}=\tilde{\Sigma}(\xi)\cup\left(\bigcup_{j\in\mathcal{N},j\neq j_0}\{|z-z_j|=\rho,|z-\bar z_j|=\rho\}\right),$$
  where $\mathcal{N}=\{1,2,\ldots,N\}$.
  \item  For $\xi\in(-\frac{1}{4}, 0)$ (see Fig.\ref{case1}), we define
      \begin{align*}
        \Sigma_{jk}(\xi)=\left\{\begin{aligned}
        &\xi_j+e^{i(\pi+(-1)^k\phi)},~~k=1,2,\\
        &\xi_j+e^{i(2\pi+(-1)^k\phi)},~~k=3,4,
        \end{aligned}\right.
      \end{align*}
      where $j=1,3,$ and for $j=2,3,$
      \begin{align*}
        \Sigma_{jk}(\xi)=\left\{\begin{aligned}
        &\xi_j+e^{i(-1)^{k-1}\phi},~~k=1,2,\\
        &\xi_j+e^{i(\pi+(-1)^{k-1}\phi)},~~k=3,4,
        \end{aligned}\right.
      \end{align*}
      For $j=2,3,4$, we define
      \begin{align*}
        \Sigma'_{j,\pm}=\frac{\xi_j+\xi{j-1}}{2}\pm e^{i\frac{\pi}{2}}\ell_j,
      \end{align*}
      where $\ell_j\in(0,\frac{|\xi_{j}-\xi_{j-1}|}{2}\tan\phi).$
      Then, we can further define
      \begin{align*}
        \Omega(\xi)&=\bigcup_{j,k=1,2,3,4}\Omega_{jk},~~\Omega_{\pm}=\mathbb C\setminus\Omega(\xi),\\
        \tilde{\Sigma}(\xi)&=\left(\bigcup_{j,k=1,2,3,4}\Sigma_{jk}\right)\cup \left(\bigcup_{j=2,3,4}\Sigma'_{j,\pm}\right).\\
        \Sigma^{(\xi,2)}&=\tilde{\Sigma}(\xi)\cup\left(\bigcup_{j\in\mathcal{N},j\neq j_0}\{|z-z_j|=\rho,|z-\bar z_j|=\rho\}\right).
      \end{align*}
  \item  For $\xi\in(0,2)$ (see Fig.\ref{case2}), we define
  \begin{align*}
        \Sigma_{1k}(\xi)&=\left\{\begin{aligned}
        &\xi_2+e^{i(-1)^{k-1}\phi},~~k=1,2,\\
        &\xi_2+e^{i(\pi+(-1)^{k-1}\phi)},~~k=3,4,
        \end{aligned}\right.\\
        \Sigma_{2k}(\xi)&=\left\{\begin{aligned}
        &\xi_3+e^{i(\pi+(-1)^{k}\phi},~~k=1,2,\\
        &\xi_3+e^{i(-1)^{k-1}\phi},~~k=3,4,
        \end{aligned}\right.\\
        \Sigma'_{3,\pm}(\xi)&=\pm e^{i\frac{\pi}{2}}\ell_3,
      \end{align*}
  where $\ell_3\in(0,\frac{|\xi_{3}-\xi_{2}|}{2}\tan\phi).$
  Then, we can further define
      \begin{align*}
        \Omega(\xi)&=\bigcup_{j=1,2,k=1,2}\Omega_{jk},~~\Omega_{\pm}=\mathbb C\setminus\Omega(\xi),\\
        \tilde{\Sigma}(\xi)&=\left(\bigcup_{j=1,2,k=1,2}\Sigma_{jk}\right)\cup \Sigma'_{3,\pm}.\\
        \Sigma^{(\xi,2)}&=\tilde{\Sigma}(\xi)\cup\left(\bigcup_{j\in\mathcal{N},j\neq j_0}\{|z-z_j|=\rho,|z-\bar z_j|=\rho\}\right).
      \end{align*}
\end{itemize}

\begin{figure}
\centering
\begin{tikzpicture}[node distance=5cm]
		\draw[pink, fill=pink!40] (0,-0.5)--(3,-1)--(3,1)--(0,0.5)--(0,-0.5)--(-3,-1)--(-3,1)--(0,0.5);
		\draw[thick](0,0.5)--(3,1)node[above]{$\Sigma_1$};
		\draw[thick](0,0.5)--(-3,1)node[left]{$\Sigma_2$};
		\draw[thick](0,-0.5)--(-3,-1)node[left]{$\Sigma_3$};
		\draw[thick](0,-0.5)--(3,-1)node[right]{$\Sigma_4$};
		\draw[->](-4,0)--(4,0)node[right]{ Re$z$};
		\draw[->](0,-2.7)--(0,3)node[above]{ Im$z$};
		\draw[-][thick,red](-3,0.5)--(3,0.5);
		\draw[-][thick,red](-3,-0.5)--(3,-0.5);
		\draw[-latex][thick](0,-0.5)--(-1.5,-0.75);
		\draw[-latex][thick](0,0.5)--(-1.5,0.75);
		\draw[-latex][thick](0,0.5)--(1.5,0.75);
		\draw[-latex][thick](0,-0.5)--(1.5,-0.75);
		\coordinate (I) at (0.2,0);
		\coordinate (C) at (-0.2,2.2);
		\coordinate (D) at (2.2,0.2);
		\fill (D) circle (0pt) node[right] {\small$\Omega_{11}$};
		\coordinate (J) at (-2.2,-0.2);
		\fill (J) circle (0pt) node[left] {\small$\Omega_{31}$};
		\coordinate (k) at (-2.2,0.2);
		\fill (k) circle (0pt) node[left] {\small$\Omega_{21}$};
		\coordinate (k) at (2.2,-0.2);
		\fill (k) circle (0pt) node[right] {\small$\Omega_{41}$};
		\fill (I) circle (0pt) node[below] {$0$};
		\coordinate (a) at (2.2,0.7);
		\fill (a) circle (0pt) node[right] {\small$\Omega_{10}$};
		\coordinate (c) at (-2.2,0.7);
		\fill (c) circle (0pt) node[left] {\small$\Omega_{20}$};
		\coordinate (d) at (-2.2,-0.7);
		\fill (d) circle (0pt) node[left] {\small$\Omega_{30}$};
		\coordinate (f) at (2.2,-0.7);
		\fill (f) circle (0pt) node[right] {\small$\Omega_{40}$};
		\end{tikzpicture}
\caption{The new regions $\Omega(\xi)(\xi\in(-\infty,-\frac{1}{4})\cup(2,\infty))$ and the contours $\Sigma_k$. }\label{Fig-2}
\end{figure}
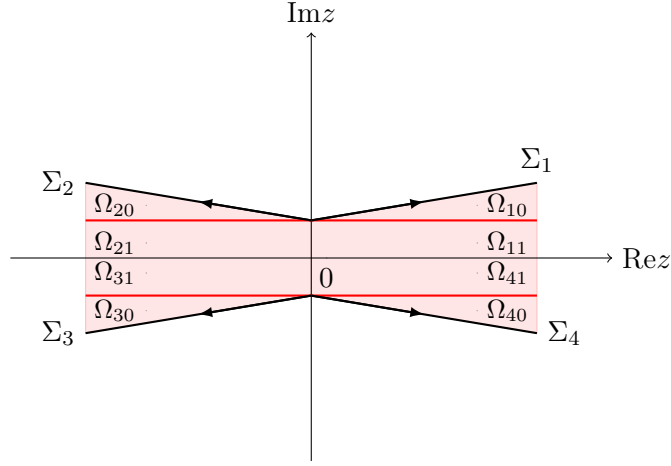

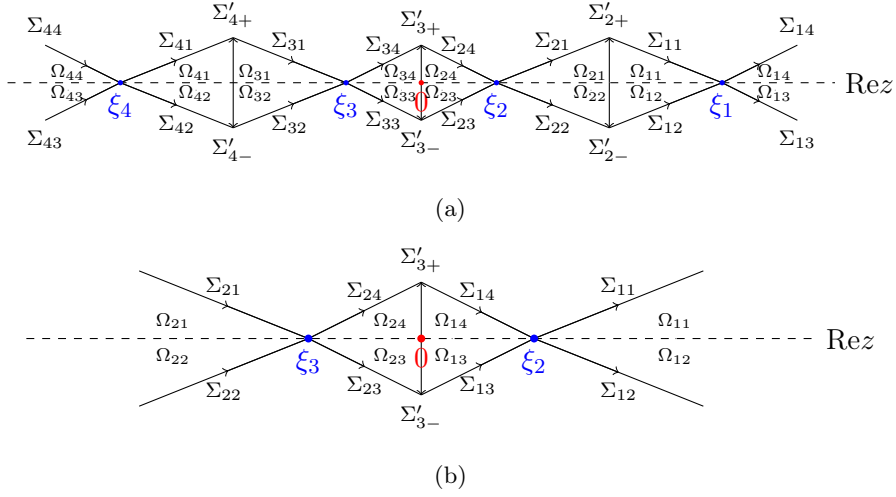
\begin{figure}
\subfigure[]{
\begin{tikzpicture}
\draw(-4,0)--(-5,0.5)node[above]{\scriptsize$\Sigma_{44}$};
\draw[-<](-4,0)--(-4.5,0.25);
\draw(-4,0)--(-2.5,0.6);
\draw[->](-4,0)--(-3.25,-0.3)node[below]{\scriptsize$\Sigma_{42}$};
\draw(-4,0)--(-5,-0.5)node[below]{\scriptsize$\Sigma_{43}$};
\draw[->](-4,0)--(-3.25,0.3)node[above]{\scriptsize$\Sigma_{41}$};
\draw(-4,0)--(-2.5,-0.6);
\draw[-<](-4,0)--(-4.5,-0.25);
\draw(-1,0)--(0,0.5);
\draw[->](-1,0)--(-0.5,0.25)node[above]{\scriptsize$\Sigma_{34}$};
\draw(-1,0)--(-2.5,0.6);
\draw[-<](-1,0)--(-1.75,-0.3)node[below]{\scriptsize$\Sigma_{32}$};
\draw(-1,0)--(0,-0.5);
\draw[-<](-1,0)--(-1.75,0.3)node[above]{\scriptsize$\Sigma_{31}$};
\draw(-1,0)--(-2.5,-0.6);
\draw[->](-1,0)--(-0.5,-0.25)node[below]{\scriptsize$\Sigma_{33}$};
\draw[dashed](-5.5,0)--(5.5,0)node[right]{ Re$z$};
\draw(1,0)--(0,0.5);
\draw[-<](1,0)--(0.5,0.25)node[above]{\scriptsize$\Sigma_{24}$};
\draw(1,0)--(2.5,0.6);
\draw[->](1,0)--(1.75,-0.3)node[below]{\scriptsize$\Sigma_{22}$};
\draw(1,0)--(0,-0.5);
\draw[->](1,0)--(1.75,0.3)node[above]{\scriptsize$\Sigma_{21}$};
\draw(1,0)--(2.5,-0.6);
\draw[-<](1,0)--(0.5,-0.25)node[below]{\scriptsize$\Sigma_{23}$};
\draw(4,0)--(5,0.5)node[above]{\scriptsize$\Sigma_{14}$};
\draw[->](4,0)--(4.5,0.25);
\draw(4,0)--(2.5,0.6);
\draw[-<](4,0)--(3.25,-0.3)node[below]{\scriptsize$\Sigma_{12}$};
\draw(4,0)--(5,-0.5)node[below]{\scriptsize$\Sigma_{13}$};
\draw[-<](4,0)--(3.25,0.3)node[above]{\scriptsize$\Sigma_{11}$};
\draw(4,0)--(2.5,-0.6);
\draw[->](4,0)--(4.5,-0.25);
\draw[->](2.5,0)--(2.5,0.6)node[above]{\scriptsize$\Sigma_{2+}'$};
\draw[->](2.5,0)--(2.5,-0.6)node[below]{\scriptsize$\Sigma_{2-}'$};
\draw[->](-2.5,0)--(-2.5,0.6)node[above]{\scriptsize$\Sigma_{4+}'$};
\draw[->](-2.5,0)--(-2.5,-0.6)node[below]{\scriptsize$\Sigma_{4-}'$};
\draw[->](0,0)--(0,0.5)node[above]{\scriptsize$\Sigma_{3+}'$};
\draw[->](0,0)--(0,-0.5)node[below]{\scriptsize$\Sigma_{3-}'$};
\coordinate (I) at (0,0);
\fill[red] (I) circle (1pt) node[below] {$0$};
\coordinate (A) at (-4,0);
\fill[blue] (A) circle (1pt) node[below] {$\xi_4$};
\coordinate (b) at (-1,0);
\fill[blue] (b) circle (1pt) node[below] {$\xi_3$};
\coordinate (e) at (4,0);
\fill[blue] (e) circle (1pt) node[below] {$\xi_1$};
\coordinate (f) at (1,0);
\fill[blue] (f) circle (1pt) node[below] {$\xi_2$};
\coordinate (ke) at (4.7,0.1);
\fill (ke) circle (0pt) node[below] {\tiny$\Omega_{13}$};
\coordinate (k1e) at (4.7,-0.1);
\fill (k1e) circle (0pt) node[above] {\tiny$\Omega_{14}$};
\coordinate (le) at (3,0.1);
\fill (le) circle (0pt) node[below] {\tiny$\Omega_{12}$};
\coordinate (l1e) at (3,-0.1);
\fill (l1e) circle (0pt) node[above] {\tiny$\Omega_{11}$};
\coordinate (n2) at (0.27,0.1);
\fill (n2) circle (0pt) node[below] {\tiny$\Omega_{23}$};
\coordinate (n12) at (0.27,-0.1);
\fill (n12) circle (0pt) node[above] {\tiny$\Omega_{24}$};
\coordinate (m2) at (2.25,0.1);
\fill (m2) circle (0pt) node[below] {\tiny$\Omega_{22}$};
\coordinate (m12) at (2.25,-0.1);
\fill (m12) circle (0pt) node[above] {\tiny$\Omega_{21}$};
\coordinate (k) at (-4.7,0.1);
\fill (k) circle (0pt) node[below] {\tiny$\Omega_{43}$};
\coordinate (k1) at (-4.7,-0.1);
\fill (k1) circle (0pt) node[above] {\tiny$\Omega_{44}$};
\coordinate (l) at (-3,0.1);
\fill (l) circle (0pt) node[below] {\tiny$\Omega_{42}$};
\coordinate (l1) at (-3,-0.1);
\fill (l1) circle (0pt) node[above] {\tiny$\Omega_{41}$};
\coordinate (n) at (-0.27,0.1);
\fill (n) circle (0pt) node[below] {\tiny$\Omega_{33}$};
\coordinate (n1) at (-0.27,-0.1);
\fill (n1) circle (0pt) node[above] {\tiny$\Omega_{34}$};
\coordinate (m) at (-2.2,0.1);
\fill (m) circle (0pt) node[below] {\tiny$\Omega_{32}$};
\coordinate (m1) at (-2.2,-0.1);
\fill (m1) circle (0pt) node[above] {\tiny$\Omega_{31}$};
\end{tikzpicture}
\label{case1}}
\subfigure[]{
\begin{tikzpicture}[scale=1.5]
\draw(-1,0)--(0,0.5);
\draw[->](-1,0)--(-0.5,0.25)node[above]{\scriptsize$\Sigma_{24}$};
\draw(-1,0)--(-2.5,0.6);
\draw[-<](-1,0)--(-1.75,-0.3)node[below]{\scriptsize$\Sigma_{22}$};
\draw(-1,0)--(0,-0.5);
\draw[-<](-1,0)--(-1.75,0.3)node[above]{\scriptsize$\Sigma_{21}$};
\draw(-1,0)--(-2.5,-0.6);
\draw[->](-1,0)--(-0.5,-0.25)node[below]{\scriptsize$\Sigma_{23}$};
\draw[dashed](-3.5,0)--(3.5,0)node[right]{ Re$z$};
\draw(1,0)--(0,0.5);
\draw[-<](1,0)--(0.5,0.25)node[above]{\scriptsize$\Sigma_{14}$};
\draw(1,0)--(2.5,0.6);
\draw[->](1,0)--(1.75,-0.3)node[below]{\scriptsize$\Sigma_{12}$};
\draw(1,0)--(0,-0.5);
\draw[->](1,0)--(1.75,0.3)node[above]{\scriptsize$\Sigma_{11}$};
\draw(1,0)--(2.5,-0.6);
\draw[-<](1,0)--(0.5,-0.25)node[below]{\scriptsize$\Sigma_{13}$};
\draw[->](0,0)--(0,0.5)node[above]{\scriptsize$\Sigma_{3+}'$};
\draw[->](0,0)--(0,-0.5)node[below]{\scriptsize$\Sigma_{3-}'$};
\coordinate (I) at (0,0);
\fill[red] (I) circle (1pt) node[below] {$0$};
\coordinate (b) at (-1,0);
\fill[blue] (b) circle (1pt) node[below] {$\xi_3$};
\coordinate (f) at (1,0);
\fill[blue] (f) circle (1pt) node[below] {$\xi_2$};
\coordinate (n2) at (0.27,0);
\fill (n2) circle (0pt) node[below] {\tiny$\Omega_{13}$};
\coordinate (n12) at (0.27,0);
\fill (n12) circle (0pt) node[above] {\tiny$\Omega_{14}$};
\coordinate (m2) at (2.25,0);
\fill (m2) circle (0pt) node[below] {\tiny$\Omega_{12}$};
\coordinate (m12) at (2.25,0);
\fill (m12) circle (0pt) node[above] {\tiny$\Omega_{11}$};
\coordinate (n) at (-0.27,0);
\fill (n) circle (0pt) node[below] {\tiny$\Omega_{23}$};
\coordinate (n1) at (-0.27,0);
\fill (n1) circle (0pt) node[above] {\tiny$\Omega_{24}$};
\coordinate (m) at (-2.2,0);
\fill (m) circle (0pt) node[below] {\tiny$\Omega_{22}$};
\coordinate (m1) at (-2.2,0);
\fill (m1) circle (0pt) node[above] {\tiny$\Omega_{21}$};
\end{tikzpicture}
\label{case2}}
\caption{Figure (a) and (b) are corresponding to the  $\xi\in(-\frac{1}{4},0)$ and  $\xi\in(0,2)$ respectively. The regions
$\Omega_{jk}$ are of the boundaries $\Sigma_{jk}$.}
\end{figure}

In what follows, we introduce extensions  of the off-diagonal entries of jump matrices of \eqref{RH-2-V1} and \eqref{RH-2-V1+1}. To approach this purpose, we construct matrix function $R^{(2)}(z,\xi)$ and control the norm of $\bar{\partial}R^{(2)}(z,\xi)$ to  ensure that $\bar{\partial}$-contribution has little impact on the long-time asymptotic solution of $q(x,t)$. It is observed that $\theta(z)$ has different properties in different regions, so we have to construct different  $R^{(2)}(z,\xi)$ in different regions. Next, we construct $R^{(2)}(z,\xi)$ as \\
$\bullet$ For $\xi\in(-\infty,-\frac{1}{4})$,
\begin{align}\label{R2-xi-1}
 R^{(2)}(z,\xi)=\left\{\begin{aligned}
&\left(
  \begin{array}{cc}
    1 & R_{j}(\xi)e^{-2it\theta(z)}  \\
    0 & 1 \\
  \end{array}
\right), ~~&z\in\Omega_{j},~~j=1,2,\\
&\left(
  \begin{array}{cc}
    1 & 0 \\
    R_{j}(\xi)e^{-2it\theta} & 1 \\
  \end{array}
\right), ~~&z\in\Omega_{j},~~j=3,4,\\
&\left(
  \begin{array}{cc}
    1 & 0 \\
    0 & 1 \\
  \end{array}
\right),~~ &z\in~~elsewhere.
\end{aligned}
\right.
\end{align}
$\bullet$ For $\xi\in(2,\infty)$,
\begin{align}\label{R2-xi-4}
 R^{(2)}(z,\xi)=\left\{\begin{aligned}
&\left(
  \begin{array}{cc}
    1 &  0 \\
    R_{j}(\xi)e^{2it\theta(z)} & 1 \\
  \end{array}
\right), ~~&z\in\Omega_{j},~~j=1,2,\\
&\left(
  \begin{array}{cc}
    1 & R_{j}(\xi)e^{-2it\theta} \\
    0 & 1 \\
  \end{array}
\right), ~~&z\in\Omega_{j},~~j=3,4,\\
&\left(
  \begin{array}{cc}
    1 & 0 \\
    0 & 1 \\
  \end{array}
\right),~~ &z\in~~elsewhere.
\end{aligned}
\right.
\end{align}
In the above formulae,  $R_{j}(\xi)(j=1,2,3,4)$ is defined  in the following proposition.
\begin{prop}\label{R-property-1}
There exist functions $R_{j}: \bar{\Omega}_{j} \rightarrow C, j= 1, 2, 3, 4$ with boundary values such that,\\
$\bullet$ for $\xi\in(-\infty,-\frac{1}{4})$,
\begin{align*}
&R_{j}(z,\xi)(j=1,2)=\left\{\begin{aligned}&-\frac{\overline{r(z)}}{1-|r(z)|^{2}}T_{+}^{2}(z) \triangleq p_j(z,\xi)T_{+}^{2}(z), &z\in\mathbb{R},\\
&-\frac{\overline{r(0)}}{1-|r(0)|^{2}}T_{0}^{2}(0)z^{2i\nu(0)}, &z\in\Sigma_{1}\cup\Sigma_{2},
\end{aligned}\right.\\
&R_{j}(z,\xi)(j=3,4)=\left\{\begin{aligned}&\frac{r(z)}{1-|r(z)|^{2}}T_{-}^{-2}(z)\triangleq p_j(z,\xi)T_{-}^{-2}(z), &z\in\mathbb{R},\\
&\frac{r(0)}{1-|r(0)|^{2}}T_{0}^{-2}(0)z^{-2i\nu(0)}, &z\in\Sigma_{3}\cup\Sigma_{4},
\end{aligned}\right.
\end{align*}
$\bullet$ for $\xi\in(2,\infty)$,
\begin{align*}
&R_{j}(z,\xi)(j=1,2)=\left\{\begin{aligned}&-r(z)T^{-2}(z) \triangleq p_j(z,\xi)T^{-2}(z), &z\in\mathbb{R},\\
&-r(0)T_0^{-2}(0)z^{-2i\nu(0)}, &z\in\Sigma_{1}\cup\Sigma_{2},
\end{aligned}\right.\\
&R_{j}(z,\xi)(j=3,4)=\left\{\begin{aligned}&-\overline{r(z)}T^{2}(z)\triangleq p_j(z,\xi)T^{2}(z), &z\in\mathbb{R},\\
&-\overline{r(0)}T_{0}^{2}(0)z^{2i\nu(0)}, &z\in\Sigma_{3}\cup\Sigma_{4}.
\end{aligned}\right.
\end{align*}

And for constant $c_1=c_1(q_{0})$,  $R_{j}(j=1,2)$ possesses the following properties:
\begin{align}\label{R-estimate}
\begin{split}
&|R_{j}(z)|\leq \sin^2(\frac{\pi}{2\phi}\arg(z-i\varepsilon_0))+\langle Rez\rangle^{-\frac{1}{2}}, \quad z\in\Omega_{j0},~~j=1,2,3,4.\\
&|\bar{\partial}R_{j}(z)|\leq c_1|z-i\varepsilon_0|^{-1/2}+c_1|p_j'(Rez)|, \quad z\in\Omega_{j0},~~j=1,2,3,4.\\
&\bar{\partial}R_{j}(z)=0,\quad z\in elsewhere,~~or~ dist(z,\mathcal{Z}\cup\bar{\mathcal{Z}})\leq \rho/3,
\end{split}
\end{align}
where $\langle z\rangle=\sqrt{1+z^2}$.

\end{prop}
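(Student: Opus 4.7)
The plan is to construct each $R_j$ by explicit interpolation between its two prescribed boundary values, using angular coordinates around the corner point $i\varepsilon_0$ (resp.\ $-i\varepsilon_0$), and then to verify the three estimates by a direct computation exploiting Proposition \ref{H-r-space} and Proposition \ref{T-property}$(g)$.

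First I would fix, for each $j \in \{1,2,3,4\}$, smooth coordinates $(s,\phi)$ on $\Omega_{j0}\cup\Omega_{j1}$ so that $z = i\varepsilon_0 + s e^{i\phi}$ (with the obvious sign change near $-i\varepsilon_0$ for $j=3,4$), with $\phi \in [0,\phi_j]$ interpolating between the real axis ($\phi=0$) and the ray $\Sigma_j$ ($\phi = \pm\phi_j$). On $\Omega_{j1}$ the relevant boundary value is the constant-like expression $p_j(0)T_0^{\pm 2}(0) z^{\pm 2i\nu(0)}$, while on $\mathbb R$ it is $p_j(\mathrm{Re}\,z) T^{\pm 2}(\mathrm{Re}\,z)$. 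I would then set, for example in the case $\xi\in(-\infty,-\tfrac14)$ and $j=1,2$,
\begin{align*}
R_j(z,\xi) = \Bigl[ p_j(\mathrm{Re}\,z)\cos\!\bigl(\tfrac{\pi}{2\phi_j}\phi\bigr) + p_j(0)\, z^{2i\nu(0)}\sin\!\bigl(\tfrac{\pi}{2\phi_j}\phi\bigr)\Bigr] T_+^{2}(z),
\end{align*}
with the analogous formula for $j=3,4$ (swap the sine/cosine roles and conjugate the exponent), and completely parallel expressions for $\xi\in(2,\infty)$. The $T$-factor is analytic in the relevant sector, so all the $\bar\partial$-contribution will come from the bracket.

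Next I would verify the boundary behavior: at $\phi=0$ the formula reduces to $p_j T^{\pm 2}$, at $\phi=\phi_j$ it reduces to the advertised value along $\Sigma_j$, and on the remaining part of the plane I simply declare $R_j\equiv 0$ (consistent with the last line of \eqref{R-estimate}). The pointwise bound $|R_j|\le \sin^2(\tfrac{\pi}{2\phi_j}\phi) + \langle\mathrm{Re}\,z\rangle^{-1/2}$ then follows from $|T|\lesssim 1$, from boundedness of $p_j(0)$ and $z^{\pm 2i\nu(0)}$ on $\Omega_{j1}$, and from the decay $|p_j(\mathrm{Re}\,z)|\lesssim \langle\mathrm{Re}\,z\rangle^{-1/2}$ supplied by $r\in H^{1,1}(\mathbb R)$ (Proposition \ref{H-r-space} together with the Sobolev embedding $H^1(\mathbb R)\hookrightarrow L^\infty$ applied to $\langle\cdot\rangle^{1/2} r$).

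For the $\bar\partial$-estimate I would expand $\bar\partial = \tfrac12 e^{i\phi}(\partial_s + i s^{-1}\partial_\phi)$ in the polar coordinates; since $T^{\pm 2}$ is holomorphic in $\Omega_j$ it is annihilated, and one finds
\begin{align*}
\bar\partial R_j(z) = \tfrac{1}{2}\,p_j'(\mathrm{Re}\,z)\cos\!\bigl(\tfrac{\pi}{2\phi_j}\phi\bigr) T_+^{\pm 2}(z) + O\!\bigl(|z-i\varepsilon_0|^{-1}\bigr)\cdot\bigl[\cdots\bigr],
\end{align*}
where the bracketed remainder is bounded in modulus by a constant and contains only the angular derivative of the trigonometric cutoff times the difference $p_j(\mathrm{Re}\,z) - p_j(0) z^{\pm 2i\nu(0)}$. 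Using Proposition \ref{T-property}$(g)$ (applied at $\xi_j=0$) to compare $T$ with its corner model, this difference is $O(|z-i\varepsilon_0|^{1/2})$, which combined with the $|z-i\varepsilon_0|^{-1}$ prefactor from $s^{-1}\partial_\phi$ gives the asserted $|z-i\varepsilon_0|^{-1/2}$ term; the $|p_j'(\mathrm{Re}\,z)|$ term is immediate from differentiating the cosine factor.

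The main obstacle, I expect, will be the corner analysis at $\pm i\varepsilon_0$: one has to match the real-axis boundary value with the constant boundary value on $\Sigma_j$ in a way that produces the $|z-i\varepsilon_0|^{-1/2}$ bound rather than a logarithmically worse $|z-i\varepsilon_0|^{-1}$ bound, and this is exactly where the Hölder-$\tfrac12$ type estimate of Proposition \ref{T-property}$(g)$ for $T$ at $\xi_j=0$ must be invoked. Everywhere else the argument is a routine chain rule, and the extension $R_j\equiv 0$ on the far regions (in particular on disks around points of $\mathcal Z\cup\bar{\mathcal Z}$, where the construction is trivially unmodified) yields the last line of \eqref{R-estimate}.
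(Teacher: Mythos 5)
The paper does not actually prove this proposition---it only remarks that the argument is ``similar to that in \cite{AIHP,Li-cgNLS}''---and your proposal is precisely the standard interpolation construction from those references, so in substance you are following the intended route: a trigonometric cutoff in the angular variable, $\bar\partial$ falling only on the cutoff and on $p_j$, and the H\"older-$\frac{1}{2}$ regularity of $r\in H^{1,1}(\mathbb R)$ converting the $|z-i\varepsilon_0|^{-1}$ factor coming from the angular derivative into the claimed $|z-i\varepsilon_0|^{-1/2}$ bound, with the $|p_j'(\mathrm{Re}\,z)|$ term coming from the radial/real derivative. One small defect to fix: as written your interpolant evaluates on $\Sigma_j$ to $p_j(0)z^{2i\nu(0)}T^{2}(z)$ rather than the prescribed $p_j(0)T_0^{2}(0)z^{2i\nu(0)}$, so the second term of the bracket should carry $T_0^{2}(0)T^{-2}(z)$ (equivalently, drop the global $T^{2}$ factor from that term); this does not affect the estimates, since $T$ is analytic, bounded, and boundedly invertible off $\mathbb R$.
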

\begin{proof}
The thinking method to prove the results
in Proposition \ref{R-property-1} is similar to that in \cite{AIHP,Li-cgNLS}. So we omit it.
\end{proof}

For the cases $\xi\in(-\frac{1}{4},0)$ and $\xi\in(0,2)$, we construct $R^{(2)}(z,\xi)$ as \\
\begin{align}\label{R2-xi-1}
 R^{(2)}(z,\xi)=\left\{\begin{aligned}
&\left(
  \begin{array}{cc}
    1 & R_{kj}(\xi)e^{-2it\theta(z)}  \\
    0 & 1 \\
  \end{array}
\right), ~~&z\in\Omega_{kj},~~j=2,4,~k=1,\ldots,n(\xi),\\
&\left(
  \begin{array}{cc}
    1 & 0 \\
    R_{kj}(\xi)e^{-2it\theta} & 1 \\
  \end{array}
\right), ~~&z\in\Omega_{kj},~~j=1,3,~k=1,\ldots,n(\xi),\\
&\left(
  \begin{array}{cc}
    1 & 0 \\
    0 & 1 \\
  \end{array}
\right),~~ &z\in~~elsewhere.
\end{aligned}
\right.
\end{align}
In the above formulae,  $R_{kj}(\xi)(j=1,2,3,4;k=1,\ldots,n(\xi))$ is defined  in the following proposition.

\begin{prop}\label{R-property-2}
There exist functions $R_{kj}: \bar{\Omega}_{kj} \rightarrow C, j= 1, 2, 3, 4,k=1,\ldots,n(\xi)$ with boundary values such that
\begin{align*}
&R_{k1}(z,\xi)=\left\{\begin{aligned}&p_{k1}(z,\xi)T^{-2}(z), &z\in I_{k1},\\
&p_{k1}(\xi_k,\xi)T_{k}^{-2}(\xi_k)(z-\xi_k)^{-2i\nu(\xi_k)}, &z\in\Sigma_{k1},
\end{aligned}\right.\\
&R_{k2}(z,\xi)=\left\{\begin{aligned}&p_{k2}(z,\xi)T^{2}(z), &z\in I_{k2},\\
&p_{k2}(\xi_k,\xi)T_{k}^{2}(\xi_k)(z-\xi_k)^{2i\nu(\xi_k)}, &z\in\Sigma_{k2},
\end{aligned}\right.\\
&R_{k3}(z,\xi)=\left\{\begin{aligned}&p_{k3}(z,\xi)T_-^{-2}(z), &z\in I_{k3},\\
&p_{k3}(\xi_k,\xi)T_{k}^{-2}(\xi_k)(z-\xi_k)^{-2i\nu(\xi_k)}, &z\in\Sigma_{k3},
\end{aligned}\right.\\
&R_{k4}(z,\xi)=\left\{\begin{aligned}&p_{k4}(z,\xi)T_+^{2}(z), &z\in I_{k4},\\
&p_{k4}(\xi_k,\xi)T_{k}^{2}(\xi_k)(z-\xi_k)^{2i\nu(\xi_k)}, &z\in\Sigma_{k4},
\end{aligned}\right.
\end{align*}
where $p_{kj}(z,\xi)$  are defined as
\begin{align*}
p_{k1}(z,\xi)&=-r(z),~~p_{k2}(z,\xi)=-\overline{r(z)},\\
  p_{k3}(z,\xi)&=\frac{r(z)}{1-|r(z)|^2},~~p_{k4}(z,\xi)= \frac{\overline{r(z)}}{1-|r(z)|^2},
\end{align*}
and $I_{kj}(\xi)$ are defined as\\
$\bullet$ for $\xi\in(-\frac{1}{4},0)$,
\begin{align*}
  I_{11}(\xi)=I_{12}(\xi)=\left(\frac{\xi_1+\xi_2}{2},\xi_1\right),~~ I_{13}(\xi)=I_{14}(\xi)=(\xi_1,\infty),\\
  I_{21}(\xi)=I_{22}(\xi)=\left( \xi_2,\frac{\xi_1+\xi_2}{2}\right),~~ I_{23}(\xi)=I_{24}(\xi)=(0,\xi_2),\\
  I_{31}(\xi)=I_{32}(\xi)=\left(\frac{\xi_3+\xi_4}{2},\xi_3\right),~~ I_{33}(\xi)=I_{34}(\xi)=(\xi_3,0),\\
  I_{41}(\xi)=I_{42}(\xi)=\left(\xi_4,\frac{\xi_3+\xi_4}{2}\right),~~ I_{43}(\xi)=I_{44}(\xi)=(-\infty,\xi_4),
\end{align*}
$\bullet$  for $\xi\in(0,2)$,
\begin{align*}
  I_{11}(\xi)=I_{12}(\xi)=(\xi_2,\infty),~~ I_{13}(\xi)=I_{14}(\xi)=(0,\xi_2),\\
  I_{21}(\xi)=I_{22}(\xi)=(-\infty, \xi_3),~~ I_{23}(\xi)=I_{24}(\xi)=(\xi_3,0).
\end{align*}
And  for constant $c_2=c_2(q_0)$, $R_{kj}$ possess  the following properties:
\begin{align}\label{R-estimate-2}
\begin{split}
&|R_{kj}(z)|\leq \sin^2(\frac{\pi}{2\phi}\arg(z-\xi_k))+\langle Rez\rangle^{-\frac{1}{2}}, \quad z\in\Omega_{kj},~~j=1,2,3,4.\\
&|\bar{\partial}R_{kj}(z)|\leq c_2|z-\xi_k|^{-1/2}+c_2|p_{kj}'(Rez)|, \quad z\in\Omega_{kj}.\\
&\bar{\partial}R_{kj}(z)=0,\quad z\in elsewhere,~~or~ dist(z,\mathcal{Z}\cup\bar{\mathcal{Z}})\leq \rho/3.
\end{split}
\end{align}

\end{prop}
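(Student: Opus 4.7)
The plan is to imitate the Dieng--McLaughlin construction used in the references \cite{AIHP,Li-cgNLS} that the author cites for Proposition~\ref{R-property-1}, adapting it to accommodate the four stationary phase points $\xi_1,\xi_2,\xi_3,\xi_4$ of the Camassa--Holm phase function. Since both the prescribed boundary values and the desired $\bar\partial$-bounds are local around each $\xi_k$, I would handle each sector $\Omega_{kj}$ independently, working in polar coordinates $z = \xi_k + \rho e^{i\psi}$ with $\psi$ confined to the angular opening of $\Omega_{kj}$.

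On a given sector, say $\Omega_{k1}$, with $I_{k1}$ on one edge and $\Sigma_{k1}$ on the other, I would define the extension by the interpolation ansatz
\begin{align*}
R_{k1}(z,\xi) &= p_{k1}(\xi_k,\xi)T_k^{-2}(\xi_k)(z-\xi_k)^{-2i\nu(\xi_k)}\\
&\quad + \mathcal{X}_k(\psi)\Bigl[p_{k1}(\mathrm{Re}\,z,\xi)T^{-2}(z)-p_{k1}(\xi_k,\xi)T_k^{-2}(\xi_k)(z-\xi_k)^{-2i\nu(\xi_k)}\Bigr],
\end{align*}
where $\mathcal{X}_k \in C^\infty$ is an angular cutoff (realized, for instance, by $\cos(\tfrac{\pi}{2\phi}(\psi-\arg\Sigma_{k1}))$) that equals $1$ on the side of $I_{k1}$ and $0$ on the ray $\Sigma_{k1}$. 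Analogous formulas for $R_{k2},R_{k3},R_{k4}$ use $T^{+2}$, $T_-^{-2}$, $T_+^{+2}$ respectively, with the corresponding sign on the exponent of $(z-\xi_k)^{\pm 2i\nu(\xi_k)}$. Direct substitution at $\psi=0$ and at $\psi=\arg\Sigma_{kj}$ confirms the required boundary values, and the explicit subtraction of the anchored ray datum is what allows the bracketed difference to vanish to order $|z-\xi_k|^{1/2}$ by Proposition~\ref{T-property}(g).

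Then I would compute $\bar\partial$ using $\bar\partial=\tfrac{1}{2}e^{i\psi}(\partial_\rho+i\rho^{-1}\partial_\psi)$. The first, anchored summand of $R_{kj}$ is analytic in $\Omega_{kj}$ and contributes nothing. In the second summand the factors $T^{\mp 2}(z)$ and $(z-\xi_k)^{\mp 2i\nu(\xi_k)}$ are analytic on the open sector, so $\bar\partial$ only hits $p_{kj}(\mathrm{Re}\,z,\xi)$ in the radial direction and $\mathcal{X}_k(\psi)$ in the angular direction. The radial piece produces the $|p_{kj}'(\mathrm{Re}\,z)|$ term in \eqref{R-estimate-2}; the angular piece inherits the $\rho^{-1}$ factor, but this is multiplied by the bracket of size $|z-\xi_k|^{1/2}$ supplied by Proposition~\ref{T-property}(g), which yields the claimed $|z-\xi_k|^{-1/2}$ control. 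The bound $|R_{kj}(z)|\lesssim\sin^2(\tfrac{\pi}{2\phi}\arg(z-\xi_k))+\langle\mathrm{Re}\,z\rangle^{-1/2}$ then follows from the explicit form of $\mathcal{X}_k$ together with the Sobolev decay $r\in H^{1,1}(\R)\hookrightarrow L^\infty(\R)$. The vanishing of $\bar\partial R_{kj}$ outside $\Omega_{kj}$ is built into the definition, and one enforces vanishing inside balls of radius $\rho/3$ around $\mathcal{Z}\cup\bar{\mathcal{Z}}$ by multiplying by an additional smooth bump function.

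The main obstacle is bookkeeping across overlapping data: there are up to four stationary points and four sectors at each, giving as many as sixteen extensions that must be compatible across the auxiliary vertical segments $\Sigma'_{j,\pm}$ without generating spurious jumps, and the anchoring factor $(z-\xi_k)^{\mp 2i\nu(\xi_k)}$ must be given a branch compatible with $T_{\pm}$ on each side of $I(\xi)$. Once these branch choices are pinned down consistently on $I(\xi)\setminus\bigcup_k\{\xi_k\}$ and the symmetry $\overline{T(\bar z)}=T(-z)$ from Proposition~\ref{T-property}(b) is invoked to transfer the construction to the lower half plane, the remaining estimates are the routine Hölder-type bounds already performed in the cited references, and this is exactly why the author elects to omit the computation.
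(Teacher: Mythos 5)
Your construction is exactly the standard interpolation-with-angular-cutoff argument from \cite{AIHP,Li-cgNLS}, which is precisely what the paper appeals to: its own ``proof'' of this proposition is a one-line deferral to those references, so your proposal supplies the intended argument rather than a different one. The only quibble is the illustrative cutoff $\cos\bigl(\tfrac{\pi}{2\phi}(\psi-\arg\Sigma_{k1})\bigr)$, which as written vanishes on $I_{k1}$ rather than on $\Sigma_{k1}$; replacing it by, say, $\cos^{2}\bigl(\tfrac{\pi}{2\phi}\psi\bigr)$ restores the stated boundary behavior and the $\sin^{2}$ bound in \eqref{R-estimate-2}, and everything else goes through as you describe.
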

\begin{proof}
The thinking method to prove the results
in Proposition \ref{R-property-2} is similar to that in \cite{AIHP,Li-cgNLS}. So we omit it.
\end{proof}

Then, by using $R^{(2)}$, we construct a transformation
\begin{align}\label{Trans-2}
M^{(2)}(y,t,z)=M^{(1)}(y,t,z)R^{(2)}(z,\xi),
\end{align}
and  derive the following mixed $\bar{\partial}$-RH problem for $M^{(2)}(y,t,z)$.

\begin{RHP}\label{RH-3}
Find a matrix function $M^{(2)}(y,t,z)$ with the following properties:
\begin{itemize}
  \item $M^{(2)}(y,t,z)$ is meromorphic in $\mathbb{C}\setminus \Sigma^{(\xi,2)}$;
  \item $M^{(2)}(y,t;-z)=M^{(2)}(y,t;z)\sigma_{1}$;
  \item $M^{(2)}(y,t;z)\to(1~~1)$ as $z\rightarrow \infty$;
  \item For $z\in \Sigma^{(\xi,2)} $, the boundary values $M^{(1)}_{\pm}(y,t,z)$ satisfy the jump relationship $M^{(2)}_{+}(y,t,z)=M^{(2)}_{-}(y,t,z)V^{(1)}(z)$, where for $\xi\in(-\infty,-\frac{1}{4})$,
      \begin{align}\label{RH-3-V2}
       V^{(2)}(\xi,z)=\left\{\begin{aligned}
       &\left(
    \begin{array}{cc}
    1 &  -\frac{\overline{r(0)}}{1-|r(0)|^2}T^{2}_0(0)z^{2i\nu(0)}e^{-2it\theta(z)} \\
    0 & 1 \\
     \end{array}
   \right),~~z\in\Sigma_j,~~j=1,2,\\
   &\left(
    \begin{array}{cc}
    1 & 0 \\
    \frac{r(0)}{1-|r(0)|^2}T^{-2}_{0}z^{-2i\nu(0)}e^{2it\theta(z)} & 1 \\
     \end{array}
   \right),~~z\in\Sigma_j,~~j=3,4,\\
   &\left(
        \begin{array}{cc}
      1 & -\frac{z-iImz_{j}}{i\gamma_{j}e^{2it\theta(iImz_{j})}}T(z)^{2} \\
      0 & 1 \\
        \end{array}
      \right), ~~|z-iImz_{j}|= \rho,~ j\neq j_0,\\
   &\left(
        \begin{array}{cc}
      1 & 0 \\
      -\frac{z+iImz_{j}}{i\gamma_{j}e^{-2it\theta(-iImz_{j})}}T(z)^{-2} & 1 \\
        \end{array}
      \right),~~|z+iImz_{j}|= \rho,~ j\neq j_0,
   \end{aligned}\right.
  \end{align}
  for $\xi\in(-\frac{1}{4},2)$,
  \begin{equation}\label{RH-3-V2+1}
       V^{(2)}(z)=\left\{\begin{aligned}
       &\left(
    \begin{array}{cc}
    1 &  R_{kj}(z,\xi)\big|_{z\in\Sigma_{kj}}e^{-2it\theta(z)} \\
    0 & 1 \\
     \end{array}
   \right),~~z\in\Sigma_{kj},~j=2,4,~~k=1,\ldots,n(\xi),\\
   &\left(
    \begin{array}{cc}
    1 &  0 \\
    R_{kj}(z,\xi)\big|_{z\in\Sigma_{kj}}e^{2it\theta(z)} & 1 \\
     \end{array}
   \right),~~z\in\Sigma_{kj},~j=1,3,~~k=1,\ldots,n(\xi),\\
    &\left(
        \begin{array}{cc}
      1 & R_{k2}\big|_{z\in\Sigma_{k2}}e^{-2it\theta(z)} \\
      0 & 1 \\
        \end{array}
      \right)^{-1}\left(
        \begin{array}{cc}
      1 & R_{(k-1)2}\big|_{z\in\Sigma_{(k-1)2}}e^{-2it\theta(z)} \\
      0 & 1 \\
        \end{array}
      \right), ~~z\in\Sigma'_{k+},~~k~~is ~~even,\\
      &\left(
        \begin{array}{cc}
      1 & 0 \\
      R_{k1}\big|_{z\in\Sigma_{k1}}e^{2it\theta(z)} & 1 \\
        \end{array}
      \right)^{-1}\left(
        \begin{array}{cc}
      1 & 0 \\
      R_{(k-1)1}\big|_{z\in\Sigma_{(k-1)1}}e^{2it\theta(z)} & 1 \\
        \end{array}
      \right), ~~z\in\Sigma'_{k-},~~k~~is ~~even,\\
      &\left(
        \begin{array}{cc}
      1 & R_{34}\big|_{z\in\Sigma_{34}}e^{-2it\theta(z)} \\
      0 & 1 \\
        \end{array}
      \right)^{-1}\left(
        \begin{array}{cc}
      1 & R_{24}\big|_{z\in\Sigma_{24}}e^{-2it\theta(z)} \\
      0 & 1 \\
        \end{array}
      \right), ~~z\in\Sigma'_{3+}(-\frac{1}{4}<\xi<0),
      \end{aligned}\right.
       \end{equation}
       \begin{equation}
       V^{(2)}(z)=\left\{\begin{aligned}
      &\left(
        \begin{array}{cc}
      1 & 0 \\
      R_{33}\big|_{z\in\Sigma_{33}}e^{2it\theta(z)} & 1 \\
        \end{array}
      \right)^{-1}\left(
        \begin{array}{cc}
      1 & 0 \\
      R_{23}\big|_{z\in\Sigma_{23}}e^{2it\theta(z)} & 1 \\
        \end{array}
      \right), ~~z\in\Sigma'_{3-}(-\frac{1}{4}<\xi<0),\\
      &\left(
        \begin{array}{cc}
      1 & R_{24}\big|_{z\in\Sigma_{24}}e^{-2it\theta(z)} \\
      0 & 1 \\
        \end{array}
      \right)^{-1}\left(
        \begin{array}{cc}
      1 & R_{14}\big|_{z\in\Sigma_{14}}e^{-2it\theta(z)} \\
      0 & 1 \\
        \end{array}
      \right), ~~z\in\Sigma'_{3+}(0<\xi<2),\\
      &\left(
        \begin{array}{cc}
      1 & 0 \\
      R_{23}(z,\xi)\big|_{z\in\Sigma_{23}}e^{2it\theta(z)} & 1 \\
        \end{array}
      \right)^{-1}\left(
        \begin{array}{cc}
      1 & 0 \\
      R_{13}(z,\xi)\big|_{z\in\Sigma_{13}}e^{2it\theta(z)} & 1 \\
        \end{array}
      \right), ~~z\in\Sigma'_{3-}(0<\xi<2),\\
   &\left(
        \begin{array}{cc}
      1 & -\frac{z-iImz_{j}}{i\gamma_{j}e^{2it\theta(iImz_{j})}}T(z)^{2} \\
      0 & 1 \\
        \end{array}
      \right), ~~|z-iImz_{j}|= \rho,~ j\neq j_0,\\
   &\left(
        \begin{array}{cc}
      1 & 0 \\
      -\frac{z+iImz_{j}}{i\gamma_{j}e^{-2it\theta(-iImz_{j})}}T(z)^{-2} & 1 \\
        \end{array}
      \right),~~|z+iImz_{j}|= \rho,~ j\neq j_0,
      \end{aligned}\right.
  \end{equation}
  for $\xi\in(2,\infty)$,
      \begin{align}\label{RH-3-V2+2}
       V^{(2)}(\xi,z)=\left\{\begin{aligned}
       &\left(
    \begin{array}{cc}
    1 &  -\overline{r(0)}T^{2}_0(0)z^{2i\nu(0)}e^{-2it\theta(z)} \\
    0 & 1 \\
     \end{array}
   \right),~~z\in\Sigma_j,~~j=3,4,\\
   &\left(
    \begin{array}{cc}
    1 & 0 \\
    -r(0)T^{-2}_{0}z^{-2i\nu(0)}e^{2it\theta(z)} & 1 \\
     \end{array}
   \right),~~z\in\Sigma_j,~~j=1,2,\\
   &\left(
        \begin{array}{cc}
      1 & -\frac{z-iImz_{j}}{i\gamma_{j}e^{2it\theta(iImz_{j})}}T(z)^{2} \\
      0 & 1 \\
        \end{array}
      \right), ~~|z-iImz_{j}|= \rho,~~\kappa_0<Imz_j<\frac{1}{2},~ j\neq j_0,\\
    &\left(
        \begin{array}{cc}
      1 & 0 \\
      -\frac{i\gamma_{j}e^{2it\theta(iImz_{j})}}{z-iImz_{j}}T(z)^{-2} & 1 \\
        \end{array}
      \right), ~~|z-iImz_{j}|= \rho,~~0<Imz_j<\kappa_0,~ j\neq j_0,\\
   &\left(
        \begin{array}{cc}
      1 & 0 \\
      -\frac{z+iImz_{j}}{i\gamma_{j}e^{-2it\theta(-iImz_{j})}}T(z)^{-2} & 1 \\
        \end{array}
      \right), ~~|z+iImz_{j}|= \rho,~~\kappa_0<Imz_j<\frac{1}{2},~ j\neq j_0,\\
      &\left(
        \begin{array}{cc}
      1 & -\frac{i\gamma_{j}e^{-2it\theta(-iImz_{j})}}{z+iImz_{j}}T(z)^{2} \\
      0 & 1 \\
        \end{array}
      \right), ~~|z+iImz_{j}|= \rho,~~0<Imz_j<\kappa_0,~ j\neq j_0;
   \end{aligned}\right.
  \end{align}
  \item $\bar{\partial}$-Derivative: For $z\in\mathbb C$, we obtain
  \begin{align}\label{dbar-M2}
    \bar{\partial}M^{(2)}(y,t,z)=M^{(2)}(y,t,z)\bar{\partial}R^{(2)}(z,\xi),
  \end{align}
  where for $\xi\in(-\infty,-\frac{1}{4})$,
  \begin{align}\label{dbar-R2-1}
   \bar{\partial}R^{(2)}(z,\xi)
   =\left\{\begin{aligned}
&\left(
  \begin{array}{cc}
    1 & \bar{\partial}R_{j}e^{-2it\theta(z)}  \\
    0 & 1 \\
  \end{array}
\right), ~~&z\in\Omega_{j},~~j=1,2,\\
&\left(
  \begin{array}{cc}
    1 & 0 \\
    \bar{\partial}R_{j}e^{2it\theta(z)} & 1 \\
  \end{array}
\right), ~~&z\in\Omega_{j},~~j=3,4,\\
&\left(
  \begin{array}{cc}
    0 & 0 \\
    0 & 0 \\
  \end{array}
\right),~~ &z\in ~~elsewhere,
\end{aligned}
\right.
  \end{align}
  for $\xi\in(2,\infty)$,
  \begin{align}\label{dbar-R2-2}
   \bar{\partial}R^{(2)}(z,\xi)
   =\left\{\begin{aligned}
&\left(
  \begin{array}{cc}
    1 & \bar{\partial}R_{j}e^{-2it\theta(z)}  \\
    0 & 1 \\
  \end{array}
\right), ~~&z\in\Omega_{j},~~j=3,4,\\
&\left(
  \begin{array}{cc}
    1 & 0 \\
    \bar{\partial}R_{j}e^{2it\theta(z)} & 1 \\
  \end{array}
\right), ~~&z\in\Omega_{j},~~j=1,2,\\
&\left(
  \begin{array}{cc}
    0 & 0 \\
    0 & 0 \\
  \end{array}
\right),~~ &z\in ~~elsewhere,
\end{aligned}
\right.
  \end{align}
  for $\xi\in(-\frac{1}{4},2)$,
  \begin{align}\label{dbar-R2-3}
   \bar{\partial}R^{(2)}(z,\xi)
   =\left\{\begin{aligned}
&\left(
  \begin{array}{cc}
    1 & \bar{\partial}R_{kj}e^{-2it\theta(z)}  \\
    0 & 1 \\
  \end{array}
\right), ~~&z\in\Omega_{kj},~~k=1,\ldots,n(\xi),~~j=2,4,\\
&\left(
  \begin{array}{cc}
    1 & 0 \\
    \bar{\partial}R_{kj}e^{2it\theta(z)} & 1 \\
  \end{array}
\right), ~~&z\in\Omega_{kj},~~k=1,\ldots,n(\xi),~~j=1,3,\\
&\left(
  \begin{array}{cc}
    0 & 0 \\
    0 & 0 \\
  \end{array}
\right),~~ &z\in ~~elsewhere.
\end{aligned}
\right.
  \end{align}
  \item  $M^{(2)}(y,t,z)$ has simple poles at each $z_{j}\in \mathcal{Z}(j=j_0)$ at which for $\xi\in(-\infty,2)$,
\begin{align}\label{ResM2}
\begin{split}
\mathop{Res}\limits_{z=z_{j_0}}M^{(2)}(y,t,z)=\lim_{z\rightarrow z_{j_0}}M^{(2)}(y,t,z)\left(\begin{array}{cc}
    0 & i\gamma_{j_0}e^{-2it\theta(z_{j_0})}\frac{1}{T}'(z_{j_0})^{-2} \\
    0 & 0 \\
  \end{array}
\right),\\
\mathop{Res}\limits_{z=\bar{z}_{j_0}}M^{(2)}(y,t,z)=\lim_{z\rightarrow \bar{z}_{j_0}}M^{(2)}(y,t,z)\left(\begin{array}{cc}
    0 & 0 \\
    i\gamma^{-1}_{j_0}e^{2it\theta(\bar{z}_{j_0})}T'(\bar{z}_{j_0})^{-2} & 0 \\
  \end{array}
\right);
\end{split}
\end{align}
For   $\xi\in(2,\infty)$, if $\kappa_0<Imz_{j_0}<\frac{1}{2}$, the residue conditions are the same with \eqref{ResM2}; if $0<Imz_{j_0}<\kappa_0$, the residue conditions are that
\begin{align}\label{ResM2+1}
\begin{split}
\mathop{Res}\limits_{z=z_{j_0}}M^{(2)}(y,t,z)=\lim_{z\rightarrow z_{j_0}}M^{(2)}(y,t,z)\left(\begin{array}{cc}
    0 & 0 \\
    i\gamma_{j_0}e^{2it\theta(z_{j_0})}T^{-2}(z_{j_0}) & 0 \\
  \end{array}
\right),\\
\mathop{Res}\limits_{z=\bar{z}_{j_0}}M^{(2)}(y,t,z)=\lim_{z\rightarrow \bar{z}_{j_0}}M^{(2)}(y,t,z)\left(\begin{array}{cc}
    0 & i\gamma_{j_0}e^{-2it\theta(\bar{z}_{j_0})}T^{2}(\bar{z}_{j_0}) \\
    0 & 0 \\
  \end{array}
\right).
\end{split}
\end{align}
\end{itemize}
\end{RHP}

Then, according to the RH problem \ref{RH-3}, the solution $q(x,t)$ of the initial value problem \eqref{CH-equation} can be expressed in a parametric form
\begin{align}\label{2-solution-1}
\begin{split}
  x(y,t)&=y+\ln\frac{M^{(2)}_1(y,t;\frac{i}{2})} {M^{(2)}_2(y,t;\frac{i}{2})},\\
  q(y,t)&=\frac{1}{2i}\lim_{z\to\frac{1}{2}}\left(\frac{M^{(2)}_1(y,t;z) M^{(2)}_2(y,t;z)}{M^{(2)}_1(y,t;\frac{i}{2})M^{(2)}_2(y,t;\frac{i}{2})}-1\right) \frac{1}{z-\frac{i}{2}}.
\end{split}
\end{align}

\section{Decomposition of the mixed $\bar{\partial}$-RH problem}\label{Decomposition-mixed-RH-problem}

The goal of this section is to decompose the mixed $\bar{\partial}$-RH problem into two parts , including a model RH problem with $\bar{\partial}R^{(2)}(z,\xi)=0$ and a pure $\bar{\partial}$-RH problem with $\bar{\partial}R^{(2)}(z,\xi)\neq0$. We denote $M^{(2)}_{RHP}=M^{(2)}_{RHP}(y,t;z)$ as the solution of the model RH problem, and construct a RH problem for $M^{(2)}_{RHP}$ first.

\begin{RHP}\label{RH-rhp}
Find a matrix value function $M^{(2)}_{RHP}(y,t;z)$, admitting
\begin{itemize}
 \item $M^{(2)}_{RHP}$ is analytic in $\mathbb{C}\backslash(\Sigma^{(\xi,2)}$;
 \item  $M^{(2)}_{RHP}(y,t;-z)=M^{(2)}_{RHP}(y,t;z)\sigma_{1}$;
 \item $M^{(2)}_{RHP,+}(y,t,z)=M^{(2)}_{RHP,-}(y,t,z)V^{(2)}(y,t,z),$ \quad $z\in\Sigma^{(\xi,2)}$, where $V^{(2)}(y,t,z)$ is the same with the jump matrix appeared in RHP \ref{RH-3};
 \item $M^{(2)}_{RHP}(y,t;z)\to(1~~1)$ as $z\rightarrow \infty$;
 \item $\bar{\partial}R^{(2)}(z,\xi)=0$, for $z\in\mathbb C$;
 \item $M^{(2)}_{RHP}(y,t;z)$ possesses the same residue condition with $M^{(2)}(y,t;z)$.
 \end{itemize}
\end{RHP}

Next, if  the existence of the solution of $M^{(2)}_{RHP}(y,t;z)$ can be guaranteed, the RHP \ref{RH-3} can be generated to a pure $\bar{\partial}$-RH problem. The existence of the solution of $M^{(2)}_{RHP}(y,t;z)$ will be proved in the following  sections. Now, assuming that the $M^{(2)}_{RHP}(y,t;z)$ exists, and by introducing a transformation
\begin{align}\label{delate-pure-RHP}
M^{(3)}(y,t;z)=M^{(2)}(y,t;z)M^{(2)}_{RHP}(y,t;z)^{-1},
\end{align}
we derive the following pure $\bar{\partial}$-RH problem.
\begin{RHP}\label{RH-4}
Find a matrix value function $M^{(3)}(y,t;z)$, admitting
\begin{itemize}
 \item $M^{(3)}(y,t;z)$ is continuous with sectionally continuous first partial derivatives in $\mathbb{C}\backslash(\Sigma^{(\xi,2)}$;
 \item   $M^{(3)}(y,t;-z)=M^{(3)}(y,t;z)\sigma_{1}$;
 \item For $z\in \mathbb{C}$, we obtain $\bar{\partial}M^{(3)}(z)=M^{(3)}(z)W^{(3)}(z)$,
       where
       \begin{align}\label{RH-4-dbar}
       W^{(3)}=M_{RHP}^{(2)}(y,t;z)\bar{\partial}R^{(2)}(z,\xi)M_{RHP}^{(2)}(y,t;z)^{-1};
       \end{align}
 \item $M^{(3)}(y,t;z)\to(1~~1)$ as $z\rightarrow \infty$.
 \end{itemize}
\end{RHP}
\begin{proof}
On the basis of  the properties of the $M^{(2)}_{RHP}(y,t;z)$ and $M^{(2)}(y,t;z)$ for RHP \ref{RH-rhp} and RHP \ref{RH-3}, the analytic and asymptotic properties of $M^{(3)}(y,t;z)$ can be derived easily. Noting the fact that $M^{(2)}_{RHP}(y,t;z)$ possesses the same jump matrix with $M^{(2)}(y,t;z)$, we obtain that
\begin{align*}
M^{(3)}_{-}(y,t;z)^{-1}M^{(3)}_{+}(y,t;z)&=M^{(2)}_{RHP,-}(y,t;z) M^{(2)}_{-}(y,t;z)^{-1}M^{(2)}_{+}(y,t;z)M^{(2)}_{RHP,+}(y,t;z)^{-1}\\
&=M^{(2)}_{RHP,-}(y,t;z)V^{2}(z)(M^{(2)}_{RHP,-}(y,t;z)V^{2}(z))^{-1}=\mathbb{I},
\end{align*}
which implies that $M^{(3)}$ has no jump. Also, it is easy to prove that there exists no pole in $M^{(3)}$ by a simple analysis. For details, see \cite{AIHP,Li-cgNLS}.
\end{proof}

Additionally, for $\xi\in(-\infty,-\frac{1}{4})\cup(2,\infty)$, the jump matrix $V^{(2)}(z)$ of RH problem \ref{RH-3} possesses the following estimates.

\begin{prop}\label{v2-estimate-xi-1}
For $\xi\in(-\infty,-\frac{1}{4})\cup(2,\infty)$, the jump matrix $V^{(2)}(z)$ admits that
\begin{align}
||V^{(2)}-\mathbb{I}||_{L^{\infty}(\Sigma_{k})}
=O\left(e^{-2\varepsilon_0c_3t}\right),
\end{align}
where $c_3$ is a positive constant.
\end{prop}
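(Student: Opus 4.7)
The plan is to bound each non-identity entry of $V^{(2)}$ on the four rays $\Sigma_1,\ldots,\Sigma_4$ by a uniformly bounded prefactor times the exponential $|e^{\pm 2it\theta(z)}|$, and to establish a uniform lower bound $|\mathrm{Re}(2i\theta(z))|\geq 2\varepsilon_0 c_3$ of the correct sign on each ray.

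First I would deal with the prefactors. On $\Sigma_k$ the off-diagonal entry of $V^{(2)}-\mathbb I$ has the form $c\, z^{\pm 2i\nu(0)}\, e^{\mp 2it\theta(z)}$ with $c = \pm r(0)T_0^{\mp 2}(0)$ (or the $(1-|r(0)|^2)$-normalized version for $\xi<-\tfrac14$), all of which are constants depending only on $q_0$. Since $\nu(0)\in\mathbb R$, $|z^{\pm 2i\nu(0)}| = e^{\mp 2\nu(0)\arg z}$ and $\arg z$ ranges over the compact set dictated by $\phi$ and $\varepsilon_0$ on each ray, so $|z^{\pm 2i\nu(0)}|\leq C$ uniformly. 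Hence everything reduces to controlling the exponential factor.

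Next I would use formula \eqref{real-theta}. Write
\begin{equation*}
\mathrm{Re}(2i\theta(z))=-\mathrm{Im}(z)\xi-\frac{2\mathrm{Im}(z)\bigl(4\mathrm{Re}(z)^2+4\mathrm{Im}(z)^2-1\bigr)}{(1+4\mathrm{Re}(z)^2-4\mathrm{Im}(z)^2)^2+64\mathrm{Re}(z)^2\mathrm{Im}(z)^2}.
\end{equation*}
Parameterize $\Sigma_1$ as $z=i\varepsilon_0+u e^{i\phi}$, $u\geq 0$, and symmetrically for $\Sigma_2,\Sigma_3,\Sigma_4$. At the endpoint $z=i\varepsilon_0$,
\begin{equation*}
\mathrm{Re}(2i\theta(i\varepsilon_0))=-\varepsilon_0\xi+\frac{2\varepsilon_0}{1-4\varepsilon_0^2}.
\end{equation*}
For $\xi<-\tfrac14$ both terms are positive, so the right-hand side is $\geq 2\varepsilon_0 c_3$ with $c_3=c_3(\xi)>0$; for $\xi>2$ the $\xi$ term dominates for small $\varepsilon_0$, giving the same bound on $-\mathrm{Re}(2i\theta(i\varepsilon_0))$. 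As $u\to\infty$ along $\Sigma_k$, $\theta(z)\sim \xi z$, so $\mathrm{Re}(2i\theta(z))\sim -\xi\,\mathrm{Im}(z)\to\pm\infty$ with the same sign, and the rational correction in \eqref{real-theta} remains bounded because the denominator is uniformly positive on $\Sigma_k$ (the stationary points $\pm z_0,\pm z_1$ are nonreal in the two regimes under consideration and $\varepsilon_0,\phi$ are chosen so that $\Sigma_k$ avoids both them and the poles of $\theta$ at $\pm i/2$). Sign preservation and the two-point bound thus upgrade to a uniform lower bound $|\mathrm{Re}(2i\theta(z))|\geq 2\varepsilon_0 c_3$ along the full ray, by the sign-diagram geometry of Fig.~\ref{fig-1}(a),(d).

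Combining, for $z\in\Sigma_k$ the relevant off-diagonal entry satisfies
\begin{equation*}
|V^{(2)}(z)-\mathbb I|\leq C\,|e^{\pm 2it\theta(z)}|=C\,e^{-t\,|\mathrm{Re}(2i\theta(z))|}\leq C e^{-2\varepsilon_0 c_3 t},
\end{equation*}
which is exactly the claim after taking $L^\infty$ over $\bigcup_k \Sigma_k$. The main obstacle is the second step: verifying the monotone preservation of the sign of $\mathrm{Re}(2i\theta)$ all the way from the endpoint $\pm i\varepsilon_0$ to infinity along the rays. This is a one-variable calculus check on a rational expression, and is the reason $\phi$ was fixed earlier so that $\Sigma_k$ lies entirely inside the pink (resp.\ white) region of Fig.~\ref{fig-1}; together with the $\varepsilon_0$-smallness conditions already imposed in Section~\ref{Opening-lenses}, this makes the sign behavior automatic and allows absorbing all $u$-dependent corrections into $c_3$.
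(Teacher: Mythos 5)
Your proposal is correct and is essentially the argument the paper has in mind: the paper's own ``proof'' is a one-line appeal to ``the properties of $R^{(2)}$ and a direct calculation,'' and what you have written out (uniform boundedness of the constant prefactors and of $|z^{\pm 2i\nu(0)}|$ via $\arg z$, a sign-definite lower bound $|\mathrm{Re}(2i\theta)|\gtrsim\varepsilon_0$ at the ray endpoints $\pm i\varepsilon_0$ from \eqref{real-theta}, growth like $-\xi\,\mathrm{Im}z$ at infinity, and the choice of $\phi,\varepsilon_0$ keeping $\Sigma_k$ inside a fixed-sign region of Fig.~\ref{fig-1}(a),(d) away from $\pm i/2$) is exactly that calculation made explicit. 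The only blemish is cosmetic: the display you quote from \eqref{real-theta} is $\mathrm{Re}(i\theta)$ rather than $\mathrm{Re}(2i\theta)$, a factor of $2$ that gets absorbed into $c_3$ and does not affect the conclusion.
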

\begin{proof}
By using the properties of $R^{(2)}(z,\xi)$ and some techniques, the above result can be obtain by a directly calculation.
\end{proof}

\begin{cor}
For $\xi\in(-\infty,-\frac{1}{4})\cup(2,\infty)$ and  $1\leqslant p<\infty$, the jump matrix $V^{(2)}(z)$ admits that
\begin{align}
||V^{(2)}-\mathbb{I}||_{L^{p}(\Sigma_{k})}
=K_pe^{-2\varepsilon_0c_3t},
\end{align}
where $K_p$ is a  constant that depends on $p$.
\end{cor}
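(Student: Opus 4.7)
The plan is to extract the desired $L^p$ bound from the $L^\infty$ bound of Proposition 6.1 by interpolation, using the additional exponential decay of $V^{(2)}-\mathbb{I}$ along the unbounded rays of $\Sigma^{(\xi,2)}$. I would begin by splitting $\Sigma^{(\xi,2)}$ into its compact circular components $\{|z\mp iIm\,z_j|=\rho\}$ and the four infinite rays $\Sigma_1,\Sigma_2,\Sigma_3,\Sigma_4$ of \eqref{S-Sigma}.

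On each circular component, the contour has finite arclength $2\pi\rho$, so $\|V^{(2)}-\mathbb{I}\|_{L^p}\le (2\pi\rho)^{1/p}\|V^{(2)}-\mathbb{I}\|_{L^\infty}$, and Proposition 6.1 yields the required exponential decay with a constant depending only on $p$ and $\rho$. On each ray $\Sigma_k$, the nonzero off-diagonal entries of $V^{(2)}-\mathbb{I}$ are, by \eqref{RH-3-V2} and \eqref{RH-3-V2+2}, of the form $\pm R_j(0)T_0^{\pm 2}z^{\pm 2i\nu(0)}e^{\mp 2it\theta(z)}$ with uniformly bounded coefficients (using the uniform bounds in Proposition 4.1). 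The essence of the argument is therefore a linear lower bound on $|\mathrm{Im}\,\theta(z)|$ along the ray.

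Parameterizing $\Sigma_1$ as $z=i\varepsilon_0+re^{i\phi}$ with $r\ge 0$, I would use the explicit formula $\theta(z)=z\bigl(\xi-\tfrac{2}{1+4z^2}\bigr)$: for $\xi\in(-\infty,-\tfrac{1}{4})\cup(2,\infty)$, the base point $i\varepsilon_0$ lies strictly inside the decay region visible in Fig.~\ref{fig-1}(a),(d), so there is $c_3>0$ with $|\mathrm{Im}\,\theta(i\varepsilon_0)|\ge c_3\varepsilon_0$; for large $r$, $\theta(z)\sim \xi z$ and so $|\mathrm{Im}\,\theta(z)|$ grows linearly in $r$. Combining the two regimes yields a uniform bound
\begin{equation*}
|e^{\mp 2it\theta(z)}|\le e^{-2t(c_3\varepsilon_0+c_4 r)}, \qquad z\in\Sigma_k,
\end{equation*}
for some constants $c_3,c_4>0$ (and analogously on the other rays). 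Integrating,
\begin{equation*}
\|V^{(2)}-\mathbb{I}\|_{L^p(\Sigma_k)}^p\le C\int_0^\infty e^{-2pt(c_3\varepsilon_0+c_4 r)}\,dr=\frac{C}{2ptc_4}e^{-2pt\varepsilon_0c_3},
\end{equation*}
so taking $p$-th roots and absorbing the polynomially small prefactor for $t$ bounded away from zero produces the bound $K_p e^{-2\varepsilon_0 c_3 t}$.

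The only delicate point I expect is justifying the linear phase bound $|\mathrm{Im}\,\theta(z)|\ge c_3\varepsilon_0+c_4 r$ uniformly on each ray; this relies on the geometry already encoded in the choice \eqref{S-Sigma} of $\Sigma_k$ (the opening angle $\phi$ chosen so that the rays stay strictly inside the sign-definite sectors of $\mathrm{Re}(2it\theta)$) together with the asymptotic behaviour $\theta(z)\sim\xi z$ at infinity. All other estimates are routine, which is why the conclusion follows by the same ``direct calculation'' cited in Proposition 6.1.
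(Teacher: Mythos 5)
Your proposal is correct and fills in exactly the standard argument that the paper leaves implicit (the corollary is stated without proof, and the proposition it follows is itself justified only by ``direct calculation''): an $L^\infty$ bound on the compact pieces plus linear growth of $-\mathrm{Im}\,\theta$ (equivalently $\mathrm{Re}(2i\theta)$) along the unbounded rays, followed by integration and absorption of the $t^{-1/p}$ prefactor for $t$ bounded away from zero. The only cosmetic remark is that $\Sigma_k$ in the statement denotes just the rays of \eqref{S-Sigma}, so your treatment of the circular components, while harmless, is not needed here.
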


This proposition implies  that for the cases $\xi\in(-\infty,-\frac{1}{4})\cup(2,\infty)$ the jump matrix $V^{(2)}(z)$ uniformly goes to $\mathbb I$ on $\Sigma_{k}$, so there is only exponentially small error (in $t$) by completely ignoring
the jump condition of $M^{(2)}_{RHP}(y,t;z)$. This proposition inspire us to construct the
solution $M^{(2)}_{RHP}(y,t;z)$ of the RH problem \ref{RH-rhp} in following form
\begin{align}\label{decompose-Mrhp}
M^{(2)}_{RHP}(y,t,z)=\left\{\begin{aligned}
&E(z)M^{R}(y,t,z), &&z\in\mathbb{C}\setminus\mathcal{U}_{\xi},\\
&E(z)M^{R}(y,t,z)M^{loc}(y,t,z), &&z\in \mathcal{U}_{\xi},
\end{aligned} \right.
\end{align}
where $\mathcal{U}_{\xi}= \left\{\begin{aligned}\bigcup_{k=1,2,3,4}\mathcal{U}_{\xi_k},\xi\in(-1/4,0),\\
\bigcup_{k=2,3}\mathcal{U}_{\xi_k},\xi\in(0,2),
\end{aligned} \right.$ with
\begin{align*}
 \mathcal{U}_{\xi_k}=\left\{z:|z-\xi_k|<min\{\frac{|\xi_1-\xi_2|}{2}, \frac{|\xi_2-\xi_3|}{2},\rho/3\}\right\}.
\end{align*}

\begin{rem}
If $\xi\in(-\infty,-\frac{1}{4})\cup(2,\infty)$, the jump matrix $V^{(2)}(z)$ uniformly goes to $\mathbb I$ on $\Sigma_{k}$ and has   jump  only on the circle around poles $z_{j_0}$ which gives rise to  $\mathcal{U}_{\xi}=\emptyset$.
\end{rem}

From the above decomposition and the definition of $\mathcal{U}_{\xi}$,  we know that $M^{(loc)}(z)$ possesses no poles in $\mathcal{U}_{\xi}$.
Additionally, $M^{R}$ solves a model RH problem,  $M^{(loc)}$ can be solved by matching  a known parabolic cylinder model in $\mathcal{U}_{\xi}$, and $E(z)$ is an error function which is a solution of a small-norm Riemann-Hilbert problem.

Furthermore,  for $\xi\in(-\frac{1}{4},2)$, the jump matrix $V^{(2)}(z)$ of RH problem \ref{RH-3} possesses the following estimates.

\begin{prop}\label{v2-estimate-xi-2}
For $\xi\in(-\frac{1}{4},2)$, the jump matrix $V^{(2)}(z)$ admits that
\begin{align}
&||V^{(2)}-\mathbb{I}||_{L^{\infty}(\Sigma_{kj}\setminus\mathcal{U}_{\xi})}
=O\left(e^{-c_4t}\right),\\
&||V^{(2)}-\mathbb{I}||_{L^{\infty}(\Sigma'_{k\pm})}
=O\left(e^{-c'_4t}\right),
\end{align}
where $c_4$ and $c'_4$ are a positive constants.
\end{prop}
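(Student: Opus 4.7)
The plan is to estimate each nonzero entry of $V^{(2)}(z)-\mathbb{I}$ on the two families of contours by extracting the exponential factor $e^{\pm 2it\theta(z)}$ and bounding the coefficient using Proposition \ref{R-property-2} together with the sign structure of $\mathrm{Re}(2i\theta)$ displayed in Figure \ref{fig-1}. First I would read off from \eqref{RH-3-V2+1} that on each ray $\Sigma_{kj}$ the matrix $V^{(2)}-\mathbb{I}$ is strictly upper or lower triangular with single nonzero off-diagonal entry $R_{kj}(z,\xi)e^{\pm 2it\theta(z)}$, and by the first bound in \eqref{R-estimate-2} the coefficient $R_{kj}$ is uniformly bounded by $1+\langle\mathrm{Re}\,z\rangle^{-1/2}$. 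Thus the whole task reduces to showing $|e^{\pm 2it\theta(z)}|\le e^{-c_{4}t}$ pointwise on $\Sigma_{kj}\setminus\mathcal{U}_{\xi}$.

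To get the exponential bound I would Taylor expand $\theta$ about the stationary point $\xi_{k}$: writing $z=\xi_{k}+ue^{i\psi_{kj}}$ with $u>0$ along the ray $\Sigma_{kj}$, we have
\begin{equation*}
\theta(z)-\theta(\xi_{k})=\tfrac{1}{2}\theta''(\xi_{k})u^{2}e^{2i\psi_{kj}}+O(u^{3}),
\end{equation*}
so that $\mathrm{Re}\bigl(\pm 2it\theta(z)\bigr)=\mp t\,\theta''(\xi_{k})u^{2}\sin(2\psi_{kj})+O(tu^{3})$. The rays $\Sigma_{kj}$ were chosen in Section \ref{Opening-lenses} precisely so that the direction of extension agrees with the sign pattern of Figure \ref{fig-1}, which forces $\mp\theta''(\xi_{k})\sin(2\psi_{kj})<0$. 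Consequently $\mathrm{Re}(\pm 2it\theta(z))\le -c\,tu^{2}$ on $\Sigma_{kj}$ for some $c>0$ uniform on compact $\xi$-intervals. On $\Sigma_{kj}\setminus\mathcal{U}_{\xi}$ we have $u\ge u_{0}>0$ (where $u_{0}$ is the fixed radius of $\mathcal{U}_{\xi_{k}}$), which yields $|e^{\pm 2it\theta(z)}|\le e^{-c u_{0}^{2}t}$. Combined with the uniform bound on $R_{kj}$ this gives the first estimate with $c_{4}=cu_{0}^{2}$.

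For the segments $\Sigma'_{k\pm}$, the jump \eqref{RH-3-V2+1} is the difference of two triangular factors of the same structure, so the bound reduces to estimating $e^{\pm 2it\theta(z)}$ for $z$ on the short vertical segment based at the midpoint between two consecutive stationary points. These segments lie in the open interior of one of the decay regions of Figure \ref{fig-1} and are uniformly bounded away from every $\xi_{k}$ and from the critical curves $\mathrm{Re}(2i\theta)=0$. A direct evaluation of \eqref{real-theta} along $z=\frac{\xi_{k}+\xi_{k-1}}{2}+i\ell$ with $\ell\in(0,\tfrac{|\xi_{k}-\xi_{k-1}|}{2}\tan\phi)$ produces $\mathrm{Re}(\pm 2it\theta)\le -c'_{4}t$ for a positive constant $c'_{4}=c'_{4}(\xi)$, and since $|R_{kj}|$ is again bounded one concludes the second estimate.

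The main obstacle is purely bookkeeping: one must verify case by case (for the four rays at each of the $n(\xi)$ stationary points, and for the $+$ or $-$ choice in the exponent dictated by \eqref{RH-3-V2+1}) that the sign of $\mp\theta''(\xi_{k})\sin(2\psi_{kj})$ agrees with the opening rule used in Section \ref{Opening-lenses}. Since $\theta''(\xi_{k})$ alternates in sign across consecutive stationary points of $\theta$, and the angles $\psi_{kj}$ are chosen accordingly from the two permissible branches $\pm\phi$, the check is routine once the orientation convention is fixed. No delicate cancellation occurs; the difficulty lies entirely in the geometric compatibility between the lens directions and the shaded regions of Figure \ref{fig-1}.
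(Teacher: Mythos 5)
The paper offers no actual argument here (its ``proof'' is a single sentence asserting the result follows from the properties of $R^{(2)}$ by direct calculation), so your proposal is necessarily more detailed than what it is being compared against. For the first estimate your outline is the standard and correct one: on each ray $V^{(2)}-\mathbb{I}$ has a single off-diagonal entry $R_{kj}e^{\pm 2it\theta}$, the coefficient is uniformly bounded by \eqref{R-estimate-2} (or directly from the explicit ray formula $p_{kj}(\xi_k,\xi)T_k^{\mp2}(\xi_k)(z-\xi_k)^{\mp 2i\nu(\xi_k)}$, whose modulus is controlled since $\nu$ is real), and the exponential decays once $|z-\xi_k|\ge u_0$. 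One caveat: the Taylor expansion $\mathrm{Re}(\pm2it\theta)=\mp t\,\theta''(\xi_k)u^2\sin(2\psi_{kj})+O(tu^3)$ is only a \emph{local} statement, while several of the rays ($\Sigma_{13},\Sigma_{14},\Sigma_{43},\Sigma_{44}$, etc.) are unbounded; there you need the global sign of $\mathrm{Re}(2i\theta)$ along the whole ray, which follows from the large-$z$ asymptotics $\theta(z)\sim \xi z$ (so $\mathrm{Re}(2i\theta)\sim -2\xi t\,\mathrm{Im}\,z$) together with the fact that the ray never re-crosses the critical set. As you say, this is bookkeeping, but the quadratic bound $-c\,tu^2$ ``on $\Sigma_{kj}$'' cannot literally be read off from the expansion at $\xi_k$.

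The second estimate is where your argument has a genuine gap. You assert that the segments $\Sigma'_{k\pm}$ ``are uniformly bounded away from \ldots the critical curves $\mathrm{Re}(2i\theta)=0$.'' This is false: $\theta$ is real on $\mathbb{R}$, so the real axis is contained in the zero set of $\mathrm{Re}(2i\theta)$, and by construction $\Sigma'_{k\pm}$ emanates \emph{from} the real axis at the midpoint $\frac{\xi_k+\xi_{k-1}}{2}$. At the base point $|e^{\pm2it\theta}|=1$, so the exponential factor alone cannot yield a $t$-uniform bound $O(e^{-c_4't})$ in $L^\infty$ over the whole segment. The decay mechanism must come from the coefficient: the jump on $\Sigma'_{k\pm}$ is the mismatch $\bigl(R_{(k-1)j}-R_{kj}\bigr)e^{\pm2it\theta}$ of two extensions which, by Proposition \ref{R-property-2}, share the same boundary value $p\,T^{\pm2}$ at the midpoint on $\mathbb{R}$; hence the coefficient vanishes as $z$ approaches the real axis along the segment. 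You must quantify this: if the difference merely vanishes linearly in $\mathrm{Im}\,z$, then $\sup_{0<v<\ell}Cv\,e^{-ctv}=O(t^{-1})$, which is weaker than the exponential rate claimed in the proposition; an exponential bound requires the two extensions to agree identically in a neighborhood of the real axis (which a cutoff-based construction of $R_{kj}$ can arrange, but which you have not invoked). As written, your argument does not establish the second estimate, and this is precisely the point a complete proof would have to address.
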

\begin{proof}
By using the properties of $R^{(2)}(z,\xi)$ and some techniques, the above results can be obtain by a directly calculation.
\end{proof}

\begin{cor}
For $\xi\in(-\frac{1}{4},2)$ and  $1\leqslant p<\infty$, the jump matrix $V^{(2)}(z)$ admits that
\begin{align}
&||V^{(2)}-\mathbb{I}||_{L^{p}(\Sigma_{kj}\setminus\mathcal{U}_{\xi})}
=K_pe^{-c_4t},\\
&||V^{(2)}-\mathbb{I}||_{L^{p}(\Sigma'_{k\pm})}
=K'_pe^{-c'_4t},
\end{align}
where $K_p$ and $K'_p$ are constants that depend on $p$.
\end{cor}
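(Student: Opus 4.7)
The plan is to upgrade the uniform estimate of Proposition~\ref{v2-estimate-xi-2} to the stated $L^p$ bound by direct integration, treating bounded and unbounded components of the contour separately. On the short segments $\Sigma'_{k\pm}$, which have finite arc length, the $L^p$ bound follows immediately from the $L^\infty$ bound via the elementary inequality $\|f\|_{L^p(\Sigma'_{k\pm})}\le |\Sigma'_{k\pm}|^{1/p}\|f\|_{L^\infty(\Sigma'_{k\pm})}$; one absorbs the length factor into $K'_p$ and keeps the same exponential rate $c'_4 t$.

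For the unbounded rays $\Sigma_{kj}\setminus\mathcal{U}_\xi$ this crude bound fails, so I would exploit the same pointwise Gaussian decay of $|e^{\pm 2it\theta(z)}|$ that drives Proposition~\ref{v2-estimate-xi-2}. Parametrizing a ray emanating from the stationary point $\xi_k$ as $z=\xi_k+se^{i\phi_{kj}}$, with $s\ge s_0>0$ on the truncated piece (where $s_0$ is the radius of $\mathcal{U}_{\xi_k}$), the construction of the lens opening gives $\mathrm{Re}(\pm 2it\theta(z))\le -c\,t\,s^2$ for some $c>0$. Combined with the boundedness of $R_{kj}$ from Proposition~\ref{R-property-2}, this yields
\begin{align*}
\|V^{(2)}-\mathbb{I}\|_{L^p(\Sigma_{kj}\setminus \mathcal{U}_\xi)}^{p}
\;\le\; C\int_{s_0}^{\infty} e^{-pcts^{2}}\,ds
\;\le\; C\, e^{-pcts_0^{2}/2}\int_{s_0}^{\infty} e^{-pcts^{2}/2}\,ds,
\end{align*}
and the remaining Gaussian integral is uniformly bounded for $t\ge 1$, producing $K_p\,e^{-c_4 t}$ with the rate $c_4$ matching that of the $L^\infty$ estimate (after adjusting constants).

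A slightly cleaner alternative is to combine the $L^\infty$ bound of Proposition~\ref{v2-estimate-xi-2} with the interpolation inequality
\begin{align*}
\|V^{(2)}-\mathbb{I}\|_{L^p}^{p}\;\le\;\|V^{(2)}-\mathbb{I}\|_{L^\infty}^{\,p-1}\,\|V^{(2)}-\mathbb{I}\|_{L^1},
\end{align*}
which reduces the question to an $L^1$ estimate; the latter follows from exactly the same Gaussian integration just described, since the $L^1$ norm contributes at most a factor that is $O(t^{-1/2})$ stronger than the $L^\infty$ norm and is therefore absorbed into $K_p$. The main obstacle, such as it is, is purely the bookkeeping required to see that a single exponential rate governs all contour pieces: one must verify that the exponent $pcts_0^2/2$ extracted on the truncated ray dominates the rate $c_4 t$ of the $L^\infty$ estimate, and that the same argument applies uniformly in the index $(k,j)$ over the finite collection of rays. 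Once this is checked, the restriction $p<\infty$ is the only place the argument is tight, and the conclusion follows.
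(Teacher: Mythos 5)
Your proposal is correct and is essentially the argument the paper implicitly relies on: the paper states this corollary without proof (and proves Proposition \ref{v2-estimate-xi-2} only by appeal to ``a direct calculation''), and the standard route is exactly yours --- the trivial length bound on the finite segments $\Sigma'_{k\pm}$, and on the rays a factorization of the uniform decay $e^{-c_4t}$ away from $\mathcal{U}_{\xi}$ followed by integration of the residual decay in the arc-length variable. One small caveat: your claim $\mathrm{Re}(\pm 2it\theta)\le -c\,t\,s^{2}$ along the whole truncated ray is not literally correct for the CH phase, since $\theta(z)=z\bigl(\xi-\tfrac{2}{1+4z^{2}}\bigr)\sim \xi z$ as $z\to\infty$, so the decay is quadratic in $s$ only near $\xi_k$ and merely linear in $s$ for large $s$; this does not harm the conclusion (you still have $\mathrm{Re}(\pm 2it\theta)\le -c\,t\,s\min(s,1)$, hence both the uniform factor $e^{-c_4t}$ from $s\ge s_0$ and integrability in $s$ uniformly for $t\ge 1$), but the exponent you integrate should be adjusted accordingly. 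With that adjustment, both your direct integration and your interpolation variant $\|V^{(2)}-\mathbb{I}\|_{L^{p}}^{p}\le\|V^{(2)}-\mathbb{I}\|_{L^{\infty}}^{p-1}\|V^{(2)}-\mathbb{I}\|_{L^{1}}$ go through and give the stated bounds (which, strictly, should be read as $O(\cdot)$ rather than equalities).
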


The above results imply that the jump matrix $V^{(2)}(z)$ uniformly goes to $\mathbb I$ on both $\Sigma_{kj}\setminus\mathcal{U}_{\xi}$
and $\Sigma'_{k\pm}$, so outside the neighborhood  $\mathcal{U}_{\xi}$, there is only exponentially small error (in $t$) by completely
ignoring the jump condition of  $M^{(2)}_{RHP}(y,t;z)$.

\section{Outer model RH problem: $M^{R}$}

In this  section, we construct a model RH problem for $M^{R}$ and prove that the solution of $M^{R}$ can be approximated by a finite sum of soliton solutions.

The model RH problem for $M^{R}$ admits a RH problem as follows:
\begin{RHP}\label{RH-5}
Find a matrix value function $M^{R}(y,t;z)$, admitting
\begin{itemize}
  \item $M^{R}(y,t;z)$ is analytical in $\mathbb{C}\setminus\{z_{j},\bar{z}_{j}\}_{j=j_0}$;
  \item $M^{R}(y,t;z)\to(1~~1)$ as $z\rightarrow \infty$;
  \item $M^{R}(y,t;z)$ has simple poles at  each $\{z_{j},\bar{z}_{j}\}_{j=j_0}$ and admits the same residue condition in RH problem \ref{RH-3} by replacing $M^{(2)}(y,t;z)$ with $M^{R}(y,t;z)$.
\end{itemize}
\end{RHP}

\begin{prop}
For given scattering data $\sigma_d=\left\{z_j,r(z)\right\}_{j=1,2,\ldots,N}$, if  $M^{R}(y,t;z)$ is the solution of RH problem \ref{RH-4}, then  $M^{R}(y,t;z)$ exists and is unique.
\end{prop}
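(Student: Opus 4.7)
The plan is to treat Riemann-Hilbert problem \ref{RH-5} as a purely algebraic (reflectionless) problem and to handle existence and uniqueness separately, exploiting both the symmetry $M^{R}(y,t;-z)=M^{R}(y,t;z)\sigma_{1}$ and the fact that the discrete spectrum lies on the imaginary axis (so $\bar z_j=-z_j$).

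For uniqueness, I would argue as follows. Suppose $M^{R}$ and $\tilde M^{R}$ are two row-vector solutions. Each component is meromorphic on $\mathbb{C}$ with simple poles only at the finitely many points in $\{z_j,\bar z_j\}_{j=j_0}$, and the residue relations prescribed in RHP \ref{RH-3} determine the principal parts in terms of the values at the \emph{other} component at the conjugate pole. Thus the difference $N(z):=M^{R}(y,t;z)-\tilde M^{R}(y,t;z)$ has its singularities controlled: at each pole, the principal part is a linear functional of $N$ evaluated at the mirror point, so after a standard elimination using the residue structure one checks that $N$ has only removable singularities. Since $N\to(0,0)$ as $z\to\infty$ and $N$ is entire, Liouville's theorem gives $N\equiv0$.

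For existence, I propose the explicit rational ansatz
\begin{equation*}
M^{R}_{1}(z)=1+\sum_{j\in J_{0}}\frac{\alpha_{j}(y,t)}{z-\bar z_{j}},\qquad M^{R}_{2}(z)=1+\sum_{j\in J_{0}}\frac{\beta_{j}(y,t)}{z-z_{j}},
\end{equation*}
where $J_{0}$ is the index set specified by $j_{0}$ (with appropriate modifications for $\xi>2$ when poles flip sides relative to $\kappa_{0}$). The symmetry $M^{R}(-z)=M^{R}(z)\sigma_{1}$, combined with $\bar z_{j}=-z_{j}$, reduces these two sets of unknowns to a single set, and plugging the ansatz into the residue conditions from RHP \ref{RH-3} (or the version with $T^{\pm 2}$ factors in the other $\xi$-range) produces a finite linear system of the form $(\mathbb{I}+K(y,t))\vec\alpha=\vec b$, where $K$ is a Cauchy-type matrix whose entries depend on the norming constants, the factors $T'(z_{j_{0}})^{-2}$, and the exponentials $e^{\pm 2it\theta(z_{j_{0}})}$. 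Existence reduces to the invertibility of $\mathbb{I}+K$.

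The principal obstacle, and the step demanding the most care, is proving the non-degeneracy of this Cauchy-type system uniformly in $(y,t)$. The standard route is to recognize $K$ (after a diagonal rescaling absorbing the positive norming constants $\gamma_{j}>0$ and the exponentials) as a positive-definite Gram/Cauchy matrix: one writes the entries as inner products of functions of the form $(z-z_{j})^{-1/2}$ weighted by $\sqrt{\gamma_{j}}\,e^{it\theta(z_{j})}$, and appeals to the classical positivity of Cauchy kernels on imaginary nodes. Alternatively, uniqueness (already established) together with Fredholm alternative for the finite-dimensional operator $\mathbb{I}+K$ immediately yields existence, since the homogeneous problem admits only the trivial solution by the Liouville argument above. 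I would present the Fredholm route, as it bypasses the explicit Gram-matrix positivity check while still producing the solution, and then note that the classical closed-form soliton formulas arise by Cramer's rule applied to the now-invertible system.
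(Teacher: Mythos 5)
The paper itself does not actually prove this proposition (it is dispatched in one line with a citation to \cite{mCH-Fan-AM}), so your proposal has to be judged on its own merits. Your overall architecture --- rational ansatz, reduction to a finite Cauchy-type linear system $(\mathbb{I}+K)\vec\alpha=\vec b$, invertibility as the crux --- is the right one, and you correctly identify the non-degeneracy of that system as the principal difficulty. The gap is that you then evade that difficulty by a circular argument. In the uniqueness step you assert that ``after a standard elimination using the residue structure one checks that $N$ has only removable singularities.'' That is not automatic: for the difference $N=(N_1,N_2)$ of two solutions the homogeneous residue conditions give, e.g., $\res_{z=z_{j_0}}N_2=c\,N_1(z_{j_0})$ and $\res_{z=\bar z_{j_0}}N_1=e\,N_2(\bar z_{j_0})$, so Liouville forces $N_1(z)=e\,N_2(\bar z_{j_0})/(z-\bar z_{j_0})$ and $N_2(z)=c\,N_1(z_{j_0})/(z-z_{j_0})$, and substituting back yields $N_1(z_{j_0})\bigl(1-\tfrac{ce}{4(\mathrm{Im}\,z_{j_0})^2}\bigr)=0$. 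The singularities are removable \emph{only if} this homogeneous system is non-degenerate --- which is exactly the invertibility of $\mathbb{I}+K$ you were trying to establish. Consequently the ``Fredholm route'' to existence (``uniqueness already established $\Rightarrow$ trivial kernel $\Rightarrow$ existence'') assumes what it is meant to prove.

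To close the gap you must actually verify $ce\neq 4(\mathrm{Im}\,z_{j_0})^2$ (and its $N(j_0)$-pole analogue). This is where the hypotheses $\gamma_{j_0}>0$ and $z_{j_0}\in i(0,\tfrac12)$ enter: since $\theta$ is odd and $z_{j_0}$ is purely imaginary, $2it\theta(z_{j_0})$ is real, the $T$-factors contribute through their modulus, and one finds that $ce$ has the wrong sign to meet the positive threshold $4(\mathrm{Im}\,z_{j_0})^2$; in the one-pole normalization of Proposition \ref{Msol-prop} this is precisely the statement $1+\alpha\neq0$ with $\alpha>0$. Either carry out this sign computation explicitly (your Gram/Cauchy positivity route, which you mention but do not execute), or prove uniqueness non-circularly by first solving the associated $2\times2$ matrix reflectionless problem --- where $\det\equiv1$ and the quotient of two solutions is legitimately shown to be entire --- and then writing the row-vector solution as $(1~~1)$ times the matrix solution. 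A minor point: the proposition as printed refers to RH problem \ref{RH-4}, but the intended object is the outer model RHP \ref{RH-5}; you correctly work with the latter.
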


\begin{proof}
  This result can be proved by a simple calculation and the thinking method is similar to \cite{mCH-Fan-AM}, so we omit it.
\end{proof}

\subsection{The asymptotic $N(j_0)$ solition solution}

Although $M^{R}(y,t,z)$ exists and is unique, as $t\to\infty$, not all discrete spectral points contribute to the solution $M^{R}(y,t,z)$. Next, we show that the jump   matrix $V^{(2)}(z)$ on the circles around poles $z_j$ and $\bar{z}_j$ $(j\in\mathcal{N}, j\neq j_0)$ uniformly goes to $\mathbb I$ as $t\to\infty$.

Firstly, we introduce some notations:
\begin{align*}
 \Sigma^{(cir)}=\left\{|z-z_j|=\rho,|z-\bar{z}_j|=\rho\right\}_{j\in\mathcal{N},j\neq j_0}.
\end{align*}
Then, we show that on the contour $\Sigma^{(cir)}$, the jump matrix $V^{(2)}(z)$ satisfies the following estimates.

\begin{prop}\label{V2-Estimate-3+1}
As $t\to\infty$, there exists positive constant $c_4$ such that
\begin{align}
||  V^{(2)}-\mathbb{I}||_{L^{\infty}(\Sigma^{(cir)})}
=O(e^{-2c_4t}),
\end{align}
where $c_4$ is a positive constant.
\end{prop}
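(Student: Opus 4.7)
The plan is to exploit the explicit triangular form of $V^{(2)}$ on each circle component of $\Sigma^{(cir)}$. By the construction in Section~\ref{Deformation of RH problem}, on a circle $|z-z_j|=\rho$ or $|z-\bar z_j|=\rho$ with $j\neq j_0$, the jump $V^{(2)}$ is unit-triangular (upper or lower) with a single nontrivial entry of the form $c(z)\gamma_j^{\pm 1}T(z)^{\pm 2}\exp\bigl(\pm 2it\theta(\pm iIm\,z_j)\bigr)$, where $c(z)$ is a rational factor involving $(z-iIm\,z_j)^{\pm 1}$. So $\|V^{(2)}-\mathbb I\|_{L^\infty(\Sigma^{(cir)})}$ reduces to controlling the exponential factor, since the remaining terms are uniformly bounded on the compact contour (the circles are disjoint from the poles of $T$ and from each other once $\rho$ is chosen as in \eqref{rho}).

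Next, since $Re\,z_j=0$, writing $\eta=Im\,z_j$ we have $\theta(i\eta)=i\eta\bigl(\xi-\tfrac{2}{1-4\eta^{2}}\bigr)$, hence $2it\theta(i\eta)$ is real and equal to $-2t\eta\bigl(\xi-\tfrac{2}{1-4\eta^{2}}\bigr)$. I would now perform a case analysis guided by the definition \eqref{j-0} of $j_0$. For $\xi<2$, every pole has $\eta\in(0,1/2)$ and the factorization involves $e^{-2it\theta(i\eta)}$; since $\tfrac{2}{1-4\eta^{2}}>2>\xi$ one gets $Re(-2it\theta(i\eta))=2t\eta\bigl(\xi-\tfrac{2}{1-4\eta^{2}}\bigr)<0$, and the $j\neq j_0$ constraint $|1/2-\eta|\ge\rho$ provides a uniform upper bound $\eta\le 1/2-\rho$ keeping $\xi-\tfrac{2}{1-4\eta^{2}}$ bounded above by a strictly negative number. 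Combined with the fact that there are only finitely many poles, this yields a uniform constant $c_4>0$. For $\xi>2$ the construction splits poles according to whether $\eta\gtrless\kappa_0=\tfrac12\sqrt{1-2/\xi}$ (note $\kappa_0$ is exactly the root of $\xi-\tfrac{2}{1-4\eta^{2}}=0$ in $(0,1/2)$), and the factorization is chosen with the opposite triangularity in the two regions precisely so that the exponent appearing in the jump is negative; the constraint $|\kappa_0-\eta|\ge\rho$ from \eqref{j-0} bounds $\xi-\tfrac{2}{1-4\eta^{2}}$ away from zero in each sub-case, again giving uniform exponential decay. The circles around $\bar z_j$ are handled identically using the Schwarz symmetry $\overline{\theta(\bar z)}=\theta(z)$ after complex conjugation.

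Finally, collecting the bounds and taking $c_4$ to be the minimum decay rate over the finitely many circles yields $\|V^{(2)}-\mathbb I\|_{L^\infty(\Sigma^{(cir)})}=O(e^{-2c_4 t})$. The main obstacle is not analytic depth but rather the bookkeeping in the $\xi>2$ regime: one must verify that in each of the four residue-to-jump conversions in \eqref{Trans-1+1} the sign of the exponent that appears in the off-diagonal entry of $V^{(2)}$ is indeed the one guaranteed negative by the sign of $\xi-\tfrac{2}{1-4\eta^2}$, and that the separation $|\kappa_0-\eta|\ge\rho$ translates into a quantitative lower bound on $|\xi-\tfrac{2}{1-4\eta^2}|$ uniformly in $\eta$. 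Once this book\-keeping is done, the estimate reduces to continuity and boundedness of $T(z)^{\pm 2}$ on the circles, which follows from Proposition~\ref{T-property}.
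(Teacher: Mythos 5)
Your proposal is correct and follows the same route the paper intends: the paper's own proof is a one-line assertion that the estimate follows ``by a simple calculation'' from the definition of $V^{(2)}$ on the circles, and your computation of $2it\theta(i\eta)=-2t\eta\bigl(\xi-\tfrac{2}{1-4\eta^{2}}\bigr)$ together with the sign/triangularity case analysis (including the role of $\kappa_0$ as the root of $\xi-\tfrac{2}{1-4\eta^{2}}=0$ when $\xi>2$) is exactly the calculation being alluded to. One minor remark: for $\xi<2$ the separation $|1/2-\eta|\ge\rho$ is not what keeps the exponent negative (the exponent only improves as $\eta\to 1/2$); the strict negativity for each fixed $j$ plus finiteness of the pole set already yields the uniform $c_4$.
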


\begin{proof}
On the basis of the definition of the jump matrix $V^{(2)}(z,\xi)$ on the circle around the poles $z_j$ and $\bar{z}_j$ $(j\in\mathcal{N}, j\neq j_0)$, this result can be proved by a simple calculation.
\end{proof}

From Proposition \ref{V2-Estimate-3+1}, we know that  the jump condition on $M^{R}(z)$ can be  completely ignored, because there is only exponentially small error (in $t$). Then, we decompose $M^{R}(y,t,z)$ as
\begin{align}\label{Trans-3}
M^{R}(y,t,z)=E^{sol}(y,t,z)M^{R}_{j_0}(y,t,z),
\end{align}
where $E^{sol}(y,t,z)$ is a error function and $M^{R}_{j_0}(y,t,z)$ solves RH problem \ref{RH-rhp} with $V^{(2)}(z)\equiv\mathbb{I}$. Additionally, $E^{sol}(y,t,z)$ is a solution of a small-norm RH problem. Then, RH problem \ref{RH-rhp} is reduced to the following RH problem.

\begin{RHP}\label{RH-6}
Find a matrix value function $M^{R}_{j_0}(y,t,z)$, admitting
\begin{itemize}
 \item $M^{R}_{j_0}(y,t,z)$ is analytic in $\mathbb{C}\setminus \left\{z_j,\bar{z}_j\right\}_{j=j_0}$;
 \item  $M^{R}_{j_0}(y,t,-z)=M^{R}_{j_0}(y,t,z)\sigma_{1}$;
 \item $M^{R}_{j_0}(y,t,z)\to(1~~1)$ as $z\rightarrow \infty$;
 \item $M^{R}_{j_0}(y,t,z)$ possesses the same residue condition with $M^{(2)}(y,t;z)$.
 \end{itemize}
\end{RHP}

\begin{rem}
It is noted that the discrete spectrum $z_j$ are distributed in the interval $i(0,\frac{1}{2})$, for the convenience of calculation and without loss of generality, we assume that there exists only one $j_0$  such that $|\frac{1}{2}-z_{j_0}|<\rho$ for all the cases, i.e., $\xi\in(-\infty,-\frac{1}{4})\cup(-\frac{1}{4},0)\cup(0,2)\cup(2,\infty)$. Furthermore, for the case $\xi\in(2,\infty)$, we assume $|\kappa_0-z_{j}|>\rho$ for any $j\in\mathcal{N}$. For the other cases, i.e., there  exist  more than one $j_0$  such that $|\frac{1}{2}-z_{j_0}|<\rho$ or  $|\kappa_0-z_{j_0}|<\rho$, it is just more computationally complex.
\end{rem}

Then, we can give the following proposition.

\begin{prop}\label{Msol-prop}
The RH problem \ref{RH-6} possesses unique solution. Moreover, $M^{R}_{j_0}(y,t;z)$ has  equivalent solution to the original RH problem \ref{RH-1} with modified scattering data $D_{j_0}=\{r(z)\equiv0,   \{z_{j_0},  \gamma_{j_0}T^{-2}(z_{j_0})\}$ under the condition that $r(z)\equiv0$ as:\\
If there exists   $j_0$  such that $|\frac{1}{2}-z_{j_0}|<\rho$, then  by using symmetry condition of  $ M^{R}_{j_0}(z)$, we have
\begin{align}
 M^{R}_{j_0}(z)=(f(z)~f(-z)),
\end{align}
where
\begin{align*}
  f(z)=\frac{1}{1+\alpha}\left(1+\alpha\frac{z+z_{j_0}}{z-z_{j_0}}\right),
\end{align*}
with $\alpha=\frac{(\gamma_{j_0}T^{-2}(z_{j_0}))^{2}}{2Imz_{j_0}}e^{2t\theta(z_{j_0})}$.
Then, accordance with \eqref{2-solution-1}, the CH equation \eqref{CH-equation} admits a one-soliton solution \cite{CH-Longtime}
 \begin{align}\label{3-solution-1}
\begin{split}
  q(x,t|D_{j_0})&=\frac{32Imz^{2}_{j_0}}{1-4Imz^{2}_{j_0}}\alpha(y,t) \left((1+\alpha(y,t))^2+\frac{16Imz^{2}_{j_0}}{1-4Imz^{2}_{j_0}}\alpha(y,t)\right)^{-1},\\
  x(y,t|D_{j_0})&=y+\ln\frac{1+\alpha(y,t)\frac{1+2Imz_{j_0}}{1-2Imz_{j_0}}} {1+\alpha(y,t)\frac{1-2Imz_{j_0}}{1+2Imz_{j_0}}}\triangleq y +c(x,t|D_{j_0}).
\end{split}
\end{align}
\end{prop}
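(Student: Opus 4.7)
The plan is to exploit the fact that RH problem \ref{RH-6} is reflectionless: there is no jump contour, only simple-pole residue conditions at $\pm z_{j_0}$ (using $\bar z_{j_0}=-z_{j_0}$, since the discrete spectrum lies on the positive imaginary axis). First I would establish uniqueness by a Liouville argument: given two candidate solutions, one forms a quotient that, after subtracting off the common residue contributions, extends to a bounded entire vector-valued function with matching normalization at infinity, forcing it to equal $(1,1)$. For existence I would construct $M^{R}_{j_0}$ explicitly by an ansatz and verify the residue conditions algebraically.

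Using the symmetry $M^{R}_{j_0}(-z)=M^{R}_{j_0}(z)\sigma_{1}$, the two columns collapse into one scalar function: write $M^{R}_{j_0}(z)=(f(z),\,f(-z))$. Normalization and the requirement that $f$ have exactly one simple pole (at $z_{j_0}$) dictate the ansatz $f(z)=1+\dfrac{c(y,t)}{z-z_{j_0}}$. Substituting into the residue relation at $z_{j_0}$ gives the scalar equation
\begin{equation*}
c(y,t)=f(-z_{j_0})\cdot i\gamma_{j_0}T^{-2}(z_{j_0})e^{2it\theta(z_{j_0})},
\end{equation*}
which is linear in $c$ and solved immediately; the residue relation at $-z_{j_0}$ is then automatic under the $\sigma_{1}$ symmetry (apply $z\mapsto -z$ to the first residue relation and use $\bar z_{j_0}=-z_{j_0}$). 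Regrouping the explicit $c(y,t)$ with the substitution $2z_{j_0}=2i\,\mathrm{Im}\,z_{j_0}$ yields the stated form involving $\alpha(y,t)=\frac{(\gamma_{j_0}T^{-2}(z_{j_0}))^{2}}{2\,\mathrm{Im}\,z_{j_0}}e^{2t\theta(z_{j_0})}$. The equivalence with RH problem \ref{RH-1} under modified scattering data $D_{j_0}$ is then immediate: that problem, with $r\equiv0$ and norming constant $\gamma_{j_0}T^{-2}(z_{j_0})$, satisfies exactly the same analyticity, symmetry, normalization, and residue conditions as RH problem \ref{RH-6}, so uniqueness forces the two solutions to coincide.

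The one-soliton formula \eqref{3-solution-1} is then extracted by inserting the explicit $f$ into the reconstruction formula \eqref{2-solution-1}. For the change of variable, $x(y,t|D_{j_0})-y=\ln[f(i/2)/f(-i/2)]$ simplifies after computing $\frac{i/2\pm z_{j_0}}{i/2\mp z_{j_0}}=\frac{1\mp 2\,\mathrm{Im}\,z_{j_0}}{1\pm 2\,\mathrm{Im}\,z_{j_0}}$, which reproduces the logarithm in \eqref{3-solution-1}. For $q$, the removable-singularity limit at $z=i/2$ is evaluated by l'H\^{o}pital's rule (equivalently, Taylor expansion of $f(z)f(-z)$ about $i/2$); routine algebra using the identity $1-4\,\mathrm{Im}^{2}z_{j_0}=(1-2\,\mathrm{Im}\,z_{j_0})(1+2\,\mathrm{Im}\,z_{j_0})$ collects the result into the quoted rational function of $\alpha(y,t)$.

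The main obstacle is computational rather than conceptual: namely, carrying out the l'H\^{o}pital limit and repackaging the result cleanly in terms of $\alpha(y,t)$ while tracking how the $T^{-2}(z_{j_0})$ factor gets absorbed into the renormalized norming constant. The Liouville uniqueness step and the single-pole rational ansatz are standard for discrete reflectionless RH problems on $i\mathbb{R}$ with $\sigma_{1}$ symmetry, and the identification with modified scattering data then follows for free from uniqueness; what requires attention is only the final algebraic normalization that matches \eqref{3-solution-1} exactly.
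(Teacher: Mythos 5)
Your proposal is correct and follows essentially the same route as the paper, whose own proof merely invokes Liouville's theorem for uniqueness and defers the explicit formula to a ``direct calculation'' with a citation to Boutet de Monvel et al.; your single-pole rational ansatz combined with the $\sigma_1$ symmetry and the residue relation is precisely that calculation written out. The only caveats are inherited from the statement itself rather than from your argument: the residue condition \eqref{ResM2} as literally written places the pole of $M^{R}_{j_0}$ in the second component at $z_{j_0}$ (while the displayed $f$, and your ansatz, put it in the first), and solving your linear equation for $c(y,t)$ actually produces $\alpha$ proportional to the first power of $\gamma_{j_0}T^{-2}(z_{j_0})$ rather than its square.
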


\begin{proof}
According to the Liouville's theorem, the uniqueness of solution follows  immediately. The soliton solution can be obtained by a directly calculation, (see \cite{CH-Longtime}).
\end{proof}

\subsection{The error function $E^{sol}(y,t,z)$ between $M^{R}(y,t,z)$ and $M^{R}_{j_0}(y,t,z)$ }\label{error-function-Merr}

In this subsection, we are going  to study the error matrix-function $E^{sol}(y,t,z)$. The first step is to prove that the
error function $E^{sol}(y,t,z)$ solves a small norm RH problem. Next, we show that  $E^{sol}(y,t,z)$ can be expanded
asymptotically for large time.
Firstly, on the basis of the decomposition \eqref{Trans-3}, we can derive a RH problem  with respect  to matrix function $E^{sol}(y,t,z)$.

\begin{RHP}\label{RH-7}
Find a matrix-valued function $E^{sol}(y,t,z)$ satisfies that
\begin{itemize}
 \item $E^{sol}(y,t,z)$ is continuous in $\mathbb{C}\setminus\Sigma^{(cir)}$;
 \item $E^{sol}(y,t,z)(y,t;z)\to(1~~1)$ as $z\rightarrow \infty$;.
 \item $E^{sol}_+(y,t,z)=E_-^{sol}(y,t,z)V^{(sol-E)}(z)$, \quad $z\in\Sigma^{(cir)}$, where
\end{itemize}
 \begin{align}\label{VE-Jump-B}
 V^{(sol-E)}(z)=
 M^{R}_{j_0}(y,t,z)V^{(2)}(z,\xi)M^{R}_{j_0}(y,t,z)^{-1}.
 \end{align}
\end{RHP}

The jump matrix  $V^{(sol-E)}(z)$ in RHP \ref{RH-7} admits the following uniformly estimation.
\begin{prop}\label{VE-estimate-prop}
The jump matrix  $V^{(sol-E)}(z)$  admits
\begin{align}\label{VE-estimate}
\big|\big|V^{(sol-E)}(z)-\mathbb{I}\big|\big|_{L^{p}(\Sigma^{(cir)})} =O(e^{-2ct}).
\end{align}
\end{prop}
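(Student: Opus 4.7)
The plan is to read
$V^{(sol-E)} - \mathbb{I} = M^{R}_{j_0}(V^{(2)} - \mathbb{I})(M^{R}_{j_0})^{-1}$
and transfer the exponential decay of $V^{(2)} - \mathbb{I}$ through a conjugation by $M^R_{j_0}$. The decisive point is that $\Sigma^{(cir)}$ is disjoint from the singularities of $M^R_{j_0}$, so the conjugating factor is harmless in $L^\infty$.

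First, I would show that $M^R_{j_0}(y,t,\cdot)$ and $(M^R_{j_0})^{-1}(y,t,\cdot)$ are uniformly bounded on $\Sigma^{(cir)}$, with bounds independent of $y$ and $t$. The circles making up $\Sigma^{(cir)}$ are centered at $\pm i\,\mathrm{Im}\,z_j$ with $j \neq j_0$ and have radius $\rho$, while $M^R_{j_0}$ has its only poles at $\pm i\,\mathrm{Im}\,z_{j_0}$. By the choice $\rho = \tfrac{1}{3}\min_{j\neq k}|z_j - z_k|$ in \eqref{rho}, every point of $\Sigma^{(cir)}$ lies at distance at least $2\rho$ from $\{z_{j_0}, \bar z_{j_0}\}$. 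Using the explicit one-soliton formula of Proposition \ref{Msol-prop}, namely $M^R_{j_0}(z) = (f(z),\, f(-z))$ with $f(z) = (1+\alpha)^{-1}\bigl(1 + \alpha(z+z_{j_0})/(z-z_{j_0})\bigr)$ and $\alpha = \alpha(y,t) \geq 0$, one checks directly that both $|f(z)|$ and $|f(z)|^{-1}$ remain bounded on $\Sigma^{(cir)}$ uniformly in $(y,t)$; the positivity of $\alpha$ prevents the cancellation that would send $f$ to zero on this contour.

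Combining this uniform bound with the $L^\infty$ estimate
$\|V^{(2)} - \mathbb{I}\|_{L^\infty(\Sigma^{(cir)})} = O(e^{-2c_4 t})$
supplied by Proposition \ref{V2-Estimate-3+1} immediately yields
\begin{align*}
\|V^{(sol-E)} - \mathbb{I}\|_{L^\infty(\Sigma^{(cir)})}
&\leq \|M^R_{j_0}\|_{L^\infty(\Sigma^{(cir)})}\,\|V^{(2)} - \mathbb{I}\|_{L^\infty(\Sigma^{(cir)})}\,\|(M^R_{j_0})^{-1}\|_{L^\infty(\Sigma^{(cir)})} \\
&= O(e^{-2c_4 t}).
\end{align*}
For $1 \leq p < \infty$ I would then exploit the fact that $\Sigma^{(cir)}$ is a finite union of circles of radius $\rho$, hence has finite total arclength $L$, and invoke the trivial embedding $\|\cdot\|_{L^p(\Sigma^{(cir)})} \leq L^{1/p}\|\cdot\|_{L^\infty(\Sigma^{(cir)})}$; the claim then follows with any positive constant $c \leq c_4$.

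The main obstacle is the uniform-in-$t$ lower bound on $|f|$ required for controlling $(M^R_{j_0})^{-1}$. As $t \to \infty$ along the soliton trajectory, $\alpha=\alpha(y,t)$ may degenerate to $0$ or to $+\infty$, so one must verify from the rational expression for $f$ that no zero appears on $\Sigma^{(cir)}$ in either asymptotic regime. The positivity of $\alpha$ together with the geometric gap guaranteed by \eqref{rho} handles both limits, and this is really the only nontrivial input of the outer-model formulas into the argument.
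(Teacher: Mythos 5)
Your argument is the same as the paper's: the paper likewise invokes the boundedness of $M^{R}_{j_0}$ on $\Sigma^{(cir)}$ (from Proposition \ref{Msol-prop}) to reduce the estimate to $\|V^{(2)}-\mathbb{I}\|_{L^{p}(\Sigma^{(cir)})}$ and then applies Proposition \ref{V2-Estimate-3+1}. Your write-up merely supplies the details the paper leaves implicit (the gap $\rho$ keeping $\Sigma^{(cir)}$ away from the poles of $M^R_{j_0}$, the uniform-in-$(y,t)$ control of $f$ via $\alpha\ge 0$, and the finite-arclength passage from $L^\infty$ to $L^p$), so it is correct and essentially identical in approach.
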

\begin{proof}
From the Proposition \ref{Msol-prop},  we learn that $M^{R}_{j_0}(y,t,z)$ is bounded on $\Sigma^{(cir)}$. Then, we have
\begin{align}\label{VE-estimate-proof}
||V^{(sol-E)}-\mathbb{I}||_{L^{p}(\Sigma^{(cir)})} =||V^{(2)}-\mathbb{I}||_{L^{p}(\Sigma^{(cir)})}.
\end{align}
Furthermore, by using Proposition \ref{V2-Estimate-3+1}, this result \eqref{VE-estimate} can be directly derived.
\end{proof}

From Proposition \ref{VE-estimate-prop}, we know  that RH problem \ref{RH-7} can be established as a  small-norm RH problem. Therefore, by using a small-norm RH problem, the solution of the RH problem \ref{RH-7} exists and unique \cite{Deift-1994-2,Deift-2003}.
In what follows, we give a briefly description of this process.

Based on  the Beals-Coifman theory, we   decompose  the jump matrix $V^{(sol-E)}$
\begin{align*}
V^{(sol-E)}(z)=(b_{-})^{-1}b_{+}, ~~b_{-}=\mathbb{I}, ~~b_{+}=V^{(sol-E)}(z).
\end{align*}
Define
\begin{align*}
(\omega_{e})_{-}=\mathbb{I}-b_{-},~~(\omega_{e})_{+}=b_{+}+\mathbb{I}, ~~\omega_{e}=(\omega_{e})_{+}+(\omega_{e})_{-}=V^{(sol-E)}(z)-\mathbb{I}.
\end{align*}
Furthermore, we define  the integral operator $C_{\omega_{err}}(L^{\infty}(\Sigma^{(cir)})\rightarrow L^{2}(\Sigma^{(cir)}))$ as
\begin{align*}
C_{\omega_{err}}f(z)=C_{-}(f(\omega_{e})_{+})+C_{+} (f(\omega_{e})_{-})=C_{-}(f(V^{(sol-E)}(z)-\mathbb{I})),
\end{align*}
where $C_{-}$ is the Cauchy projection operator
\begin{align}\label{Cauchy-opera}
C_{-}(f)(z)\lim_{z\rightarrow\Sigma_{-}^{(cir)}}\int_{\Sigma^{(cir)}}\frac{f(s)}{s-z}ds,
\end{align}
and $||C_{-}||_{L^{2}}$ is bounded. Therefore, we derive   the solution of RH problem \ref{RH-7} for $E^{sol}(y,t,z)$  as
\begin{align}\label{E-solution}
E^{sol}(y,t,z)=\mathbb{I}+\frac{1}{2\pi i}\int_{\Sigma^{(cir)}}\frac{\mu_{e}(s) (V^{(sol-E)}(s)-\mathbb{I})}{s-z}\,ds,
\end{align}
where $\mu_{e}\in L^2 (\Sigma^{(cir)})$  is the solution of the following equation
\begin{align}\label{mu-equation}
(1-C_{\omega_{err}})\mu_{e}=\mathbb{I}.
\end{align}
Next,  according to the properties of the Cauchy projection operator $C_{-}$ and  Proposition \ref{VE-estimate-prop}, we obtain
\begin{align}
\begin{split}
\|C_{\mu_{e}}\|_{L^2(\Sigma^{(cir)})}& \lesssim\|C_-\|_{L^2(\Sigma^{(cir)})\rightarrow L^2(\Sigma^{(cir)})}\|V^{(err)}-\mathbb{I}\|_{L^{\infty}
(\Sigma^{(cir)})}\\
&\lesssim O(e^{-2ct}).
\end{split}
\end{align}
This  implies that $1-C_{\omega_{err}}$ is invertible.
Furthermore,
\begin{align}\label{mu-tildeE-estimation}
||\mu_{e}||_{L^{2}(\Sigma^{(cir)})}\lesssim \frac{\|C_{\omega_{e}}\|}{1-\|C_{\omega_{e}}\|}\lesssim O(e^{-2ct}).
\end{align}
Hence, $\mu_{e}$ is existence and uniqueness. Now,  the existence of the solution of RH problem \ref{RH-7} for $E^{sol}(y,t,z)$ is guaranteed.

Next, we need to reconstruct the solutions of the CH equation \eqref{CH-equation}.To achieve this goal, the asymptotic behavior of $E^{sol}(y,t,z)$ as $z\rightarrow\infty$ and $z\rightarrow\frac{i}{2}$ need to be evaluated.

\begin{prop}
As $z\rightarrow\infty$, the $E^{sol}(y,t,z)$ defined in \eqref{Trans-3} satisfies
\begin{align}\label{E-estimation}
|E^{sol}(y,t,z)-\mathbb{I}|\lesssim O(e^{-2ct}).
\end{align}
When $z=\frac{i}{2}$,
\begin{align}
E^{sol}(\frac{i}{2})=\mathbb{I}+\frac{1}{2\pi i}\int_{\Sigma^{(cir)}}\frac{(\mathbb I+\mu_{e}(s))(V^{(sol-E)}(s)-\mathbb{I})}{s-\frac{i}{2}}\,ds.
\end{align}
As $z\rightarrow\frac{i}{2}$, $E^{sol}(y,t,z)$ can be expanded as
\begin{align}
E^{sol}(z)=E^{sol}(\frac{i}{2})+E^{sol}_1(z-\frac{i}{2})+ O\left(\left(z-\frac{i}{2}\right)^2\right),
\end{align}
where
\begin{align*}
  E^{sol}_1=\frac{1}{2\pi i}\int_{\Sigma^{(cir)}}\frac{(\mathbb I+\mu_{e}(s))(V^{(sol-E)}(s)-\mathbb{I})}{\left(s-\frac{i}{2}\right)^2}\,ds.
\end{align*}
Additionally, $E^{sol}(\frac{i}{2})$ and $E^{sol}_1$ satisfy the following long-time asymptotic behavior,
\begin{align}\label{E-sol-E-1}
  |E^{sol}(\frac{i}{2})-\mathbb I|\lesssim O(e^{-2ct}),~~ |E^{sol}_1|\lesssim O(e^{-2ct}).
\end{align}
\end{prop}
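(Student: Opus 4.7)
The proof splits into three parts, all of which rest on the integral representation \eqref{E-solution} of $E^{sol}$, the $L^{2}$ decay estimate $\|V^{(sol-E)}-\mathbb{I}\|_{L^{2}(\Sigma^{(cir)})}=O(e^{-2ct})$ from Proposition \ref{VE-estimate-prop}, and the small-norm estimate $\|\mu_{e}\|_{L^{2}(\Sigma^{(cir)})}=O(e^{-2ct})$ from \eqref{mu-tildeE-estimation}. The key geometric fact I would invoke throughout is that $\Sigma^{(cir)}$ is a finite disjoint union of circles of radius $\rho$ centred at $z_{j},\bar{z}_{j}$ with $j\neq j_{0}$, and so is compact, uniformly bounded away from $z=\infty$, and, by the choice \eqref{rho} of $\rho$ together with the definition \eqref{j-0} of $j_{0}$, also bounded away from $z=\frac{i}{2}$. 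Consequently both $\|(s-z)^{-1}\|_{L^{\infty}(\Sigma^{(cir)})}$ for large $|z|$ and $\|(s-\tfrac{i}{2})^{-k}\|_{L^{\infty}(\Sigma^{(cir)})}$ for $k=1,2$ are finite constants independent of $t$.

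For the first assertion \eqref{E-estimation}, a straightforward application of the Cauchy--Schwarz inequality to the representation gives
\[
|E^{sol}(y,t,z)-\mathbb{I}|\le \frac{1}{2\pi}\Bigl\|\frac{1}{s-z}\Bigr\|_{L^{\infty}(\Sigma^{(cir)})}\bigl(\|\mathbb{I}\|_{L^{2}(\Sigma^{(cir)})}+\|\mu_{e}\|_{L^{2}(\Sigma^{(cir)})}\bigr)\,\bigl\|V^{(sol-E)}-\mathbb{I}\bigr\|_{L^{2}(\Sigma^{(cir)})}.
\]
As $z\to\infty$ the $L^{\infty}$ factor is $O(1/|z|)$, the $L^{2}$-norm factor involving $\mu_{e}$ is bounded uniformly, and the remaining factor is $O(e^{-2ct})$, producing \eqref{E-estimation}.

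For the expansion of $E^{sol}$ near $z=\frac{i}{2}$, I would Taylor expand the Cauchy kernel. Since $s\mapsto (s-z)^{-1}$ is smooth in $z$ in a neighbourhood of $\frac{i}{2}$, uniformly in $s\in\Sigma^{(cir)}$ by the separation argument above, one has
\[
\frac{1}{s-z}=\frac{1}{s-\frac{i}{2}}+\frac{z-\frac{i}{2}}{(s-\frac{i}{2})^{2}}+O\bigl((z-\tfrac{i}{2})^{2}\bigr),
\]
with the remainder uniform in $s\in\Sigma^{(cir)}$. Substituting into \eqref{E-solution} and collecting by order in $(z-\frac{i}{2})$ yields the stated closed forms for $E^{sol}(\frac{i}{2})$ and $E^{sol}_{1}$.

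Finally, the exponential bounds \eqref{E-sol-E-1} follow from exactly the same Cauchy--Schwarz argument used in the first paragraph, but with $(s-z)^{-1}$ replaced by $(s-\frac{i}{2})^{-1}$ or $(s-\frac{i}{2})^{-2}$, whose $L^{\infty}(\Sigma^{(cir)})$ norms are $t$-independent constants. The decay is then fully dictated by $\|V^{(sol-E)}-\mathbb{I}\|_{L^{2}(\Sigma^{(cir)})}=O(e^{-2ct})$ together with the uniform $L^{2}$-boundedness of $\mathbb{I}+\mu_{e}$. No step is a genuine obstacle; the only item that requires care is the geometric separation of $\Sigma^{(cir)}$ from both $\infty$ and $\frac{i}{2}$, which is, however, already built into \eqref{rho} and the selection rule \eqref{j-0}.
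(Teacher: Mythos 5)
Your argument is correct and is essentially the proof the paper intends: the paper's own justification is just the one-line remark that \eqref{E-estimation} follows "directly" from \eqref{VE-estimate}, \eqref{E-solution} and \eqref{mu-tildeE-estimation}, and your Cauchy--Schwarz estimate on the integral representation together with the Taylor expansion of the Cauchy kernel at $z=\frac{i}{2}$ is precisely the standard way to carry that out. The only detail worth noting is that you correctly supply the geometric separation of $\Sigma^{(cir)}$ from $\infty$ and from $\frac{i}{2}$, which the paper leaves implicit.
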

\begin{proof}
According to \eqref{VE-estimate}, \eqref{E-solution} and \eqref{mu-tildeE-estimation}, we can directly derive the formula \eqref{E-estimation}.
\end{proof}

Then, according to the above results, we have the following corollary.
\begin{cor}
For   $|t|\gg1$, uniformly for $z\in\mathbb{C}$,  $M^{out}(x,t;z)$  is expressed as
\begin{align}\label{Mout-MoutLambda}
M^{R}(z)=M^{R}_{j_0}(z)\left(\mathbb{I}+O(e^{-ct})\right).
\end{align}
\end{cor}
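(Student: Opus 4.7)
The plan is to combine the multiplicative decomposition $M^{R}(z)=E^{sol}(z)M^{R}_{j_0}(z)$ introduced in \eqref{Trans-3} with the uniform exponential smallness of $E^{sol}(z)-\mathbb{I}$ established in \eqref{E-estimation}. Once these two ingredients are in place, the corollary is essentially an algebraic rearrangement; the real content lies in propagating smallness through conjugation by $M^{R}_{j_0}$ and in checking uniformity.

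First I would recall that $E^{sol}$ solves the small-norm RH problem \ref{RH-7}, whose jump satisfies $\|V^{(sol-E)}-\mathbb{I}\|_{L^{p}(\Sigma^{(cir)})}=O(e^{-2ct})$ by Proposition \ref{VE-estimate-prop}. The Beals-Coifman representation \eqref{E-solution}, together with the bound \eqref{mu-tildeE-estimation} on $\mu_e$, yields
\begin{equation*}
E^{sol}(z)-\mathbb{I}=\frac{1}{2\pi i}\int_{\Sigma^{(cir)}}\frac{(\mathbb{I}+\mu_e(s))(V^{(sol-E)}(s)-\mathbb{I})}{s-z}\,ds.
\end{equation*}
Since $\Sigma^{(cir)}$ is a finite union of fixed small circles, for $z$ staying a positive distance from $\Sigma^{(cir)}$ the denominator $|s-z|$ is bounded below and Cauchy-Schwarz combined with the $L^{2}$ estimate produces $|E^{sol}(z)-\mathbb{I}|\lesssim e^{-2ct}$ uniformly in such $z$.

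Next I would substitute into the decomposition and factor $M^{R}_{j_0}(z)$ on the left:
\begin{equation*}
M^{R}(z)=M^{R}_{j_0}(z)+(E^{sol}(z)-\mathbb{I})M^{R}_{j_0}(z)=M^{R}_{j_0}(z)\bigl[\mathbb{I}+M^{R}_{j_0}(z)^{-1}(E^{sol}(z)-\mathbb{I})M^{R}_{j_0}(z)\bigr].
\end{equation*}
The explicit rational form of $M^{R}_{j_0}$ given in Proposition \ref{Msol-prop} shows that both $M^{R}_{j_0}(z)$ and its inverse are bounded, uniformly in $t$, on any compact set avoiding $\{z_{j_0},\bar z_{j_0}\}$. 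Combining this with the previous estimate gives the claimed bound $M^{R}(z)=M^{R}_{j_0}(z)(\mathbb{I}+O(e^{-ct}))$ on such sets.

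The hard part will be genuine uniformity for all $z\in\mathbb{C}$. Two local regions need care: near the poles $\{z_{j_0},\bar z_{j_0}\}$, where $M^{R}_{j_0}$ has simple singularities and naive conjugation is ill-defined; and on the contour $\Sigma^{(cir)}$ itself, where $E^{sol}$ is only a boundary value. Near the poles I would exploit the fact that $E^{sol}$ is analytic there, so the singular factors in $(M^{R}_{j_0})^{-1}$ cancel against those in $M^{R}_{j_0}$ after conjugation, leaving a quantity that is still $O(e^{-ct})$ (with the safety factor between $e^{-2ct}$ in \eqref{E-estimation} and $e^{-ct}$ in the claim absorbing any polynomial loss). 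On $\Sigma^{(cir)}$ one uses the jump relation $V^{(sol-E)}=M^{R}_{j_0}V^{(2)}(M^{R}_{j_0})^{-1}$ together with Proposition \ref{V2-Estimate-3+1} to estimate the two boundary values in terms of one another. With these local arguments folded in, the estimate \eqref{Mout-MoutLambda} holds uniformly on all of $\mathbb{C}$, completing the proof.
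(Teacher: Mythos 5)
Your proposal is correct and follows essentially the same route the paper intends: the corollary is an immediate consequence of the decomposition $M^{R}=E^{sol}M^{R}_{j_0}$ in \eqref{Trans-3} together with the exponential estimate \eqref{E-estimation} on $E^{sol}-\mathbb{I}$, with the error factor moved to the right of $M^{R}_{j_0}$ by conjugation using the boundedness of $M^{R}_{j_0}$ and its inverse. Your additional care about uniformity near $\{z_{j_0},\bar z_{j_0}\}$ and on $\Sigma^{(cir)}$ is a reasonable elaboration of details the paper leaves implicit, not a different method.
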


\subsection{Local solvable model near phase points for $\xi\in (-\frac{1}{4},2)$}\label{section-local-model}

Proposition \ref{v2-estimate-xi-2} implies that $V^{(2)}-\mathbb I$ does not have a uniform estimate for large time near the phase point $z=\pm z_0,\pm z_1$(Here, we still use $\pm z_0,\pm z_1$ to represent the stationary points). Therefore,  we need to continue our study near the stationary phase points  in this subsection.

Define  local jump contour as (see Fig. \ref{fig-5})
\begin{align*}
\Sigma^{(loc)}&=\Sigma^{(\xi,2)}\cap\mathcal U_{\xi}.
\end{align*}

\begin{figure}[h]
\subfigure[]{
\begin{tikzpicture}[scale=1.2]
\draw(-4,0)--(-4.8,0.4);
\draw[dashed](0,2)node[right]{ Im$z$}--(0,-2);
\draw[-latex](0,2)--(0,2.1);
\draw[-latex](5.3,0)--(5.4,0);
\draw[-<](-4,0)--(-4.4,0.2);
\draw(-4,0)--(-3.2,0.4);
\draw[->](-4,0)--(-3.6,-0.2);
\draw(-4,0)--(-4.8,-0.4);
\draw[->](-4,0)--(-3.6,0.2);
\draw(-4,0)--(-3.2,-0.4);
\draw[-<](-4,0)--(-4.4,-0.2);
\draw(-1,0)--(-0.2,0.4);
\draw[->](-1,0)--(-0.6,0.2);
\draw(-1,0)--(-1.8,0.4);
\draw[-<](-1,0)--(-1.4,-0.2);
\draw(-1,0)--(-0.2,-0.4);
\draw[-<](-1,0)--(-1.4,0.2);
\draw(-1,0)--(-1.8,-0.4);
\draw[->](-1,0)--(-0.6,-0.2);
\draw[dashed](-5,0)--(5.4,0)node[right]{ Re$z$};
\draw(1,0)--(0.2,0.4);
\draw[-<](1,0)--(0.6,0.2);
\draw(1,0)--(0.2,-0.4);
\draw[->](1,0)--(1.4,-0.2);
\draw(1,0)--(1.8,0.4);
\draw[->](1,0)--(1.4,0.2);
\draw(1,0)--(1.8,-0.4);
\draw[-<](1,0)--(0.6,-0.2);
\draw(4,0)--(4.8,0.4);
\draw[->](4,0)--(4.4,0.2);
\draw(4,0)--(3.2,0.4);
\draw[-<](4,0)--(3.6,-0.2);
\draw(4,0)--(4.8,-0.4);
\draw[-<](4,0)--(3.6,0.2);
\draw(4,0)--(3.2,-0.4);
\draw[->](4,0)--(4.4,-0.2);
\coordinate (I) at (0,0);
\fill[red] (I) circle (1pt) node[below right] {$0$};
\coordinate (A) at (-4,0);
\fill (A) circle (1pt) node[below] {$\xi_4$};
\coordinate (b) at (-1,0);
\fill (b) circle (1pt) node[below] {$\xi_3$};
\coordinate (e) at (4,0);
\fill (e) circle (1pt) node[below] {$\xi_1$};
\coordinate (f) at (1,0);
\fill (f) circle (1pt) node[below] {$\xi_2$};
\coordinate (c) at (-2,0);
\fill[red] (c) circle (1pt) node[below] {\scriptsize$-1$};
\coordinate (d) at (2,0);
\fill[red] (d) circle (1pt) node[below] {\scriptsize$1$};
\end{tikzpicture}
\label{si1}}
\subfigure[]{
\begin{tikzpicture}[scale=1.7]
\draw[dashed](0,1.5)node[right]{ Im$z$}--(0,-1.5);
\draw[-latex](0,1.5)--(0,1.6);
\draw[-latex](3.3,0)--(3.4,0);
\draw(-1,0)--(-0.2,0.4);
\draw[->](-1,0)--(-0.6,0.2);
\draw(-1,0)--(-1.8,0.4);
\draw[-<](-1,0)--(-1.4,-0.2);
\draw(-1,0)--(-0.2,-0.4);
\draw[-<](-1,0)--(-1.4,0.2);
\draw(-1,0)--(-1.8,-0.4);
\draw[->](-1,0)--(-0.6,-0.2);
\draw[dashed](-3,0)--(3.4,0)node[right]{ Re$z$};
\draw(1,0)--(0.2,0.4);
\draw[-<](1,0)--(0.6,0.2);
\draw(1,0)--(0.2,-0.4);
\draw[->](1,0)--(1.4,-0.2);
\draw(1,0)--(1.8,0.4);
\draw[->](1,0)--(1.4,0.2);
\draw(1,0)--(1.8,-0.4);
\draw[-<](1,0)--(0.6,-0.2);
\coordinate (I) at (0,0);
\fill[red] (I) circle (1pt) node[below right] {$0$};
\coordinate (b) at (-1,0);
\fill (b) circle (1pt) node[below] {$\xi_3$};
\coordinate (f) at (1,0);
\fill (f) circle (1pt) node[below] {$\xi_2$};
\coordinate (c) at (-2,0);
\fill[red] (c) circle (1pt) node[below] {\scriptsize$-1$};
\coordinate (d) at (2,0);
\fill[red] (d) circle (1pt) node[below] {\scriptsize$1$};
\end{tikzpicture}
\label{si2}}
\caption{Figures (a) and (b) denote the contour $\Sigma^{\xi,2}\cap\mathcal{U}_{\xi}$ corresponding to the  $-\frac{1}{4}<\xi<0$ and $0<\xi<2$, respectively.}
	\label{fig-5}
\end{figure}
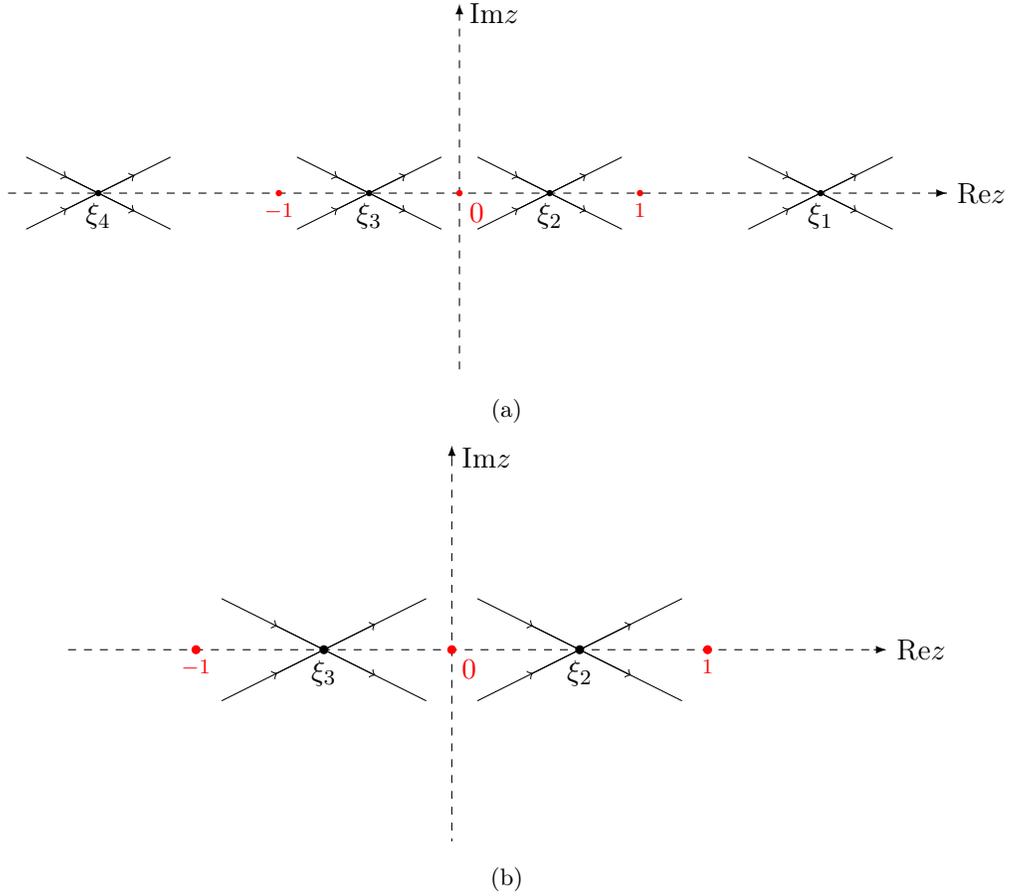

Next, on the basis of the definition of  $\mathcal U_{\xi}$ we find that there are no discrete spectrum in $\mathcal{U}_{\xi}$. Therefore, we have $T(z)=\delta(z)$ and RH problem \ref{RH-rhp} can be reduced to the following model.

\begin{RHP}\label{RH-loc}
Find a matrix value function $M^{(loc)}(y,t,z)$, admitting
\begin{itemize}
 \item $M^{(loc)}(y,t,z)$ is continuous in $\mathbb{C}\setminus(\Sigma^{loc})$.
 \item $M^{(loc)}(y,t;-z)=M^{(loc)}(y,t;z)\sigma_{1}$;
  \item $M^{(loc)}(y,t;z)\to(1~~1)$ as $z\rightarrow \infty$;
 \item $M_+^{(loc)}(y,t,z)=M_{-}^{(loc)}(y,t,z)V^{(loc)}(z),$ ~~ $z\in\Sigma^{(loc)}$, where the jump matrix $V^{(loc)}(z)$ satisfies
  \begin{align}\label{V-loc-jump}
V^{(loc)}(z)=\left\{\begin{aligned}
       &\left(
    \begin{array}{cc}
    1 &  R_{kj}(z,\xi)\big|_{z\in\Sigma_{kj}}e^{-2it\theta(z)} \\
    0 & 1 \\
     \end{array}
   \right),~~z\in\Sigma_{kj}\cap\mathcal U_{\xi},~j=2,4,~~k=1,\ldots,n(\xi),\\
   &\left(
    \begin{array}{cc}
    1 &  0 \\
    R_{kj}(z,\xi)\big|_{z\in\Sigma_{kj}}e^{2it\theta(z)} & 1 \\
     \end{array}
   \right),~~z\in\Sigma_{kj}\cap\mathcal U_{\xi},~j=1,3,~~k=1,\ldots,n(\xi).
   \end{aligned}\right.
\end{align}
\end{itemize}
\end{RHP}

Proposition \ref{v2-estimate-xi-2} implies that the jump matrix $V^{(loc)}(z)$ uniformly goes to $\mathbb I$ outside the neighborhood of $\pm z_0$ and $\pm z_1$, then following the result in \cite{CH-Longtime}, the above RH problem \ref{RH-loc} is solvable. The main
contribution to the $M^{(loc)}(y,t,z)$ comes from a local RH problem near $\pm z_0$ and $\pm z_1$ ( see Fig. \ref{fig-5}). We next describe the process of construction briefly for the solution of the RH problem \ref{RH-loc} (see \cite{CH-Longtime} for the detail).

Firstly, we note that for $\ell=0,1$,
\begin{align}
  \theta(z_{\ell})&=-\frac{16z^2_{\ell}}{(1+4z^2_{\ell})^2},\\
  \theta''(z_{\ell})&=\frac{48z_{\ell}-64z^3_{\ell}}{(1+4z^2_{\ell})^3},
\end{align}
where $\theta(z_{0})>0$ for $\xi\in(-\frac{1}{4},2)$ and $\theta(z_{1})<0$ for $\xi\in(-\frac{1}{4},0)$.

Furthermore, we make a change of coordinates
\begin{align}
  \zeta_{\ell}&=\sqrt{2\theta''(z_{\ell})}(z-z_{\ell}),\\
  z&=z_{\ell}+\frac{\zeta_{\ell}}{\sqrt{2\theta''(z_{\ell})}},
\end{align}
such that
\begin{align*}
  \theta(z)= \theta(z_{\ell})+\frac{1}{4}\zeta^2_{\ell}+O(\zeta_{\ell}^3).
\end{align*}
Set \begin{align}
      r_{\ell}=r(z_{\ell})(\sqrt{2\theta''(z_{\ell})})^{-2i\nu(z_{\ell})} e^{2it\theta(z_{\ell})T_{\ell}^{-2}(z_{\ell})}.
    \end{align}

Then the solution of the RH problem \ref{RH-loc} near the stationary points $z_{\ell}$ is given by
\begin{align}
  M^{(loc)}_{z_{\ell}}(z)=\mathbb I+\frac{1}{\sqrt{2\theta''(z_{\ell})}(z-z_{\ell})}\frac{i}{t^{\frac{1}{2}}} \left(\begin{array}{cc}
                   0 & -\beta_{\ell} \\
                  \bar{ \beta}_{\ell} & 0
                 \end{array}\right)+O(t^{-\alpha}),
\end{align}
where $\alpha\in(\frac{1}{2},1)$, and
\begin{align*}
\beta_{0}&=\sqrt{\nu(z_0)}\exp i\left(\frac{\pi}{4}-\arg(r(z_0))+\arg\Gamma(i\nu(z_0))+2\arg(T_{0}(z_{0}) - \nu(z_0)\log(2t\theta''(z_{0}))-t\theta(z_0) \right),\\
  \beta_{1}&=\sqrt{\nu(z_1)}\exp i\left(-\frac{\pi}{4}-\arg(r(z_1))-\arg\Gamma(i\nu(z_1))+2\arg(T_{1}(z_{1}) + \nu(z_1)\log(2t\theta''(z_{1}))+t\theta(z_1) \right).
\end{align*}

Then, according to the symmetry condition of the vector RH problem,  near the other two stationary points $-z_1$ and $-z_0$, the solution of the RH problem \ref{RH-loc} are that
\begin{align}
  \begin{split}
    M^{(loc)}_{-z_{\ell}}(z)&=\Sigma_1M^{(loc)}_{z_{\ell}}(-z)\Sigma_1\\
    &=\mathbb I-\frac{(-1)^{\ell}}{\sqrt{2\theta''(z_{\ell})}(z+z_{\ell})}\frac{i}{t^{\frac{1}{2}}} \left(\begin{array}{cc}
                   0 & \bar{\beta}_{\ell} \\
                   -\beta_{\ell} & 0
                 \end{array}\right)+O(t^{-\alpha}).
  \end{split}
\end{align}

Finally, for the case $\xi\in(-\frac{1}{4},0)$, the solution of the RH problem \ref{RH-loc} can be derived as
\begin{align}
\begin{split}
   M^{(loc)}(z)=&(1~~1)+\frac{1}{\sqrt{2\theta''(z_{0})}(z-z_{0})} \frac{i}{t^{\frac{1}{2}}} ( -\beta_{0}~~\bar{\beta}_{0})-\frac{1}{\sqrt{2\theta''(z_{0})} (z+z_{0})}\frac{i}{t^{\frac{1}{2}}} ( -\beta_{0}~~\bar{\beta}_{0})\\
   &+\frac{1}{\sqrt{2\theta''(z_{1})}(z-z_{1})} \frac{i}{t^{\frac{1}{2}}} ( \bar{\beta}_{1}~~-\beta_{1})-\frac{1}{\sqrt{2\theta''(z_{1})} (z+z_{1})}\frac{i}{t^{\frac{1}{2}}} ( -\beta_{1}~~\bar{\beta}_{1})+O(t^{-\alpha}),
   \end{split}
\end{align}
for the case $\xi\in(0,2)$, the solution of the RH problem \ref{RH-loc} can be derived as
\begin{align}
\begin{split}
   M^{(loc)}(z)=&(1~~1)+\frac{1}{\sqrt{2\theta''(z_{0})}(z-z_{0})} \frac{i}{t^{\frac{1}{2}}} ( -\beta_{0}~~\bar{\beta}_{0})\\
   &-\frac{1}{\sqrt{2\theta''(z_{0})} (z+z_{0})}\frac{i}{t^{\frac{1}{2}}} ( -\beta_{0}~~\bar{\beta}_{0})+O(t^{-\alpha}).
   \end{split}
\end{align}
In the local domain $\mathcal{U}_{\xi}$, we can obtain the result that
\begin{align}\label{Msp-Est}
|M^{(loc)}-\mathbb{I}|\lesssim O(t^{-\frac{1}{2}}), ~~as~~ t\rightarrow\infty,
\end{align}
which implies that
\begin{align}\label{7-15}
\|M^{(loc)}(z)\|_{\infty}\lesssim 1.
\end{align}

\section{The small-norm RH problem for $E(z)$}

According to the transformation \eqref{decompose-Mrhp}, we have
\begin{align}\label{explict-Ez}
E(z)=\left\{\begin{aligned}
&M^{(2)}_{RHP}(z)M^{R}(z)^{-1}, &&z\in\mathbb{C}\setminus\mathcal{U}_{\xi},\\
&M^{(2)}_{RHP}(z)M^{loc}(z)^{-1}M^{R}(z)^{-1}, &&z\in\mathcal{U}_{\xi},
\end{aligned} \right.
\end{align}
which is analytic in $\mathbb{C}\setminus\Sigma^{(E)}(\xi)$ where $\Sigma^{(E)}(\xi)$ (see Fig. \ref{fig-7}) is defined as
\begin{align*}
\Sigma^{(E)}(\xi)=\left\{\begin{aligned}
&\partial\mathcal{U}_{\xi}\cup(\tilde \Sigma(\xi)\setminus\mathcal{U}_{\xi}),~~\xi\in(-1/4,2),\\
&\tilde \Sigma(\xi), ~~\xi\in(-\infty,-1/4)\cup(2,\infty).
\end{aligned}\right.
\end{align*}

\begin{figure}
	\centering
\subfigure[]{
\begin{tikzpicture}[node distance=3cm]
		\draw[thick](0,0.5)--(3,1)node[above]{$\Sigma_1$};
		\draw[thick](0,0.5)--(-3,1)node[left]{$\Sigma_2$};
		\draw[thick](0,-0.5)--(-3,-1)node[left]{$\Sigma_3$};
		\draw[thick](0,-0.5)--(3,-1)node[right]{$\Sigma_4$};
		\draw[dashed,->](-4,0)--(4,0)node[right]{ Re$z$};
		\draw[dashed,->](0,-2)--(0,2)node[above]{ Im$z$};
		\draw[-latex][thick](0,-0.5)--(-1.5,-0.75);
		\draw[-latex][thick](0,0.5)--(-1.5,0.75);
		\draw[-latex][thick](0,0.5)--(1.5,0.75);
		\draw[-latex][thick](0,-0.5)--(1.5,-0.75);
		\end{tikzpicture}
}
\subfigure[]{
	\begin{tikzpicture}[scale=1.2]
\draw(4.35,0.18)--(5,0.5);
\draw[->](4.35,0.18)--(4.5,0.25);
\draw(3.64,0.15)--(2.5,0.6);
\draw[-<](3.64,-0.15)--(3.25,-0.3);
\draw(4.35,-0.18)--(5,-0.5);
\draw[-<](3.64,0.15)--(3.25,0.3);
\draw(3.64,-0.15)--(2.5,-0.6);
\draw[->](4.35,-0.18)--(4.5,-0.25);
\draw(-4.35,0.18)--(-5,0.5);
\draw[-<](-4.35,0.18)--(-4.5,0.25);
\draw(-3.64,0.15)--(-2.5,0.6);
\draw[->](-3.64,-0.15)--(-3.25,-0.3);
\draw(-4.35,-0.18)--(-5,-0.5);
\draw[->](-3.64,0.15)--(-3.25,0.3);
\draw(-3.64,-0.15)--(-2.5,-0.6);
\draw[-<](-4.35,-0.18)--(-4.5,-0.25);
\draw(-0.64,0.15)--(0,0.5);
\draw[->](-0.64,0.15)--(-0.4,0.28);
\draw(-1.35,0.16)--(-2.5,0.6);
\draw[-<](-1.35,-0.16)--(-1.75,-0.31);
\draw(-0.64,-0.15)--(0,-0.5);
\draw[-<](-1.35,0.16)--(-1.75,0.31);
\draw(-1.35,-0.16)--(-2.5,-0.6);
\draw[->](-0.64,-0.15)--(-0.4,-0.28);
\draw[dashed](-5.5,0)--(5.5,0)node[right]{Re$z$};
\draw [-latex](5.5,0)--(5.6,0);
\draw(0.64,0.15)--(0,0.5);
\draw[-<](0.64,0.15)--(0.4,0.28);
\draw(1.35,0.16)--(2.5,0.6);
\draw[->](1.35,-0.16)--(1.75,-0.31);
\draw(0.64,-0.15)--(0,-0.5);
\draw[->](1.35,0.16)--(1.75,0.31);
\draw(1.35,-0.16)--(2.5,-0.6);
\draw[-<](0.64,-0.15)--(0.4,-0.28);
\draw[->](2.5,0)--(2.5,0.6);
\draw[->](2.5,0)--(2.5,-0.6);
\draw[->](-2.5,0)--(-2.5,0.6);
\draw[->](-2.5,0)--(-2.5,-0.6);
\draw[->](0,0)--(0,0.5);
\draw[->](0,0)--(0,-0.5);
\coordinate (I) at (0,0);
\fill (I) circle (1pt) node[below right] {$0$};
\coordinate (A) at (-4,0);
\fill[blue] (A) circle (1pt) node[below] {$\xi_4$};
\coordinate (b) at (-1,0);
\fill[blue] (b) circle (1pt) node[below] {$\xi_3$};
\coordinate (e) at (4,0);
\fill[blue] (e) circle (1pt) node[below] {$\xi_1$};
\coordinate (f) at (1,0);
\draw[thick,blue](1,0) circle (0.4);
\fill[blue] (f) circle (1pt) node[below] {$\xi_2$};
\draw[thick,blue](4,0) circle (0.4);
\draw[thick,blue](-1,0) circle (0.4);
\draw[thick,blue](-4,0) circle (0.4);
\coordinate (c) at (-2,0);
\fill[red] (c) circle (1pt) node[below] {\scriptsize$-1$};
\coordinate (d) at (2,0);
\fill[red] (d) circle (1pt) node[below] {\scriptsize$1$};
\end{tikzpicture}
}
\subfigure[]{
\begin{tikzpicture}[scale=1.7]
\draw(-0.64,0.15)--(0,0.5);
\draw[->](-0.64,0.15)--(-0.4,0.28);
\draw(-1.35,0.16)--(-2.5,0.6);
\draw[-<](-1.35,-0.16)--(-1.75,-0.31);
\draw(-0.64,-0.15)--(0,-0.5);
\draw[-<](-1.35,0.16)--(-1.75,0.31);
\draw(-1.35,-0.16)--(-2.5,-0.6);
\draw[->](-0.64,-0.15)--(-0.4,-0.28);
\draw[dashed](-3.5,0)--(3.5,0)node[right]{Re$z$};
\draw [-latex](3.5,0)--(3.6,0);
\draw(0.64,0.15)--(0,0.5);
\draw[-<](0.64,0.15)--(0.4,0.28);
\draw(1.35,0.16)--(2.5,0.6);
\draw[->](1.35,-0.16)--(1.75,-0.31);
\draw(0.64,-0.15)--(0,-0.5);
\draw[->](1.35,0.16)--(1.75,0.31);
\draw(1.35,-0.16)--(2.5,-0.6);
\draw[-<](0.64,-0.15)--(0.4,-0.28);
\draw[->](0,0)--(0,0.5);
\draw[->](0,0)--(0,-0.5);
\coordinate (I) at (0,0);
\fill (I) circle (1pt) node[below right] {$0$};
\coordinate (b) at (-1,0);
\fill[blue] (b) circle (1pt) node[below] {$\xi_3$};
\coordinate (f) at (1,0);
\draw[thick,blue](1,0) circle (0.4);
\fill[blue] (f) circle (1pt) node[below] {$\xi_2$};
\draw[thick,blue](-1,0) circle (0.4);
\coordinate (c) at (-2,0);
\fill[red] (c) circle (1pt) node[below] {\scriptsize$-1$};
\coordinate (d) at (2,0);
\fill[red] (d) circle (1pt) node[below] {\scriptsize$1$};
\end{tikzpicture}
}
\caption{ ($a$) and ($b$) denote the jump contour $\Sigma^{(E)}$ as $\xi\in(-\infty,-1/4)\cup(2,\infty)$, $\xi\in(-1/4,0)$ and $\xi\in(0,2)$, respectively. The blue circles represent the jump contour $U(\xi)$ at each phase points $\xi_j$.}\label{fig-7}
\end{figure}
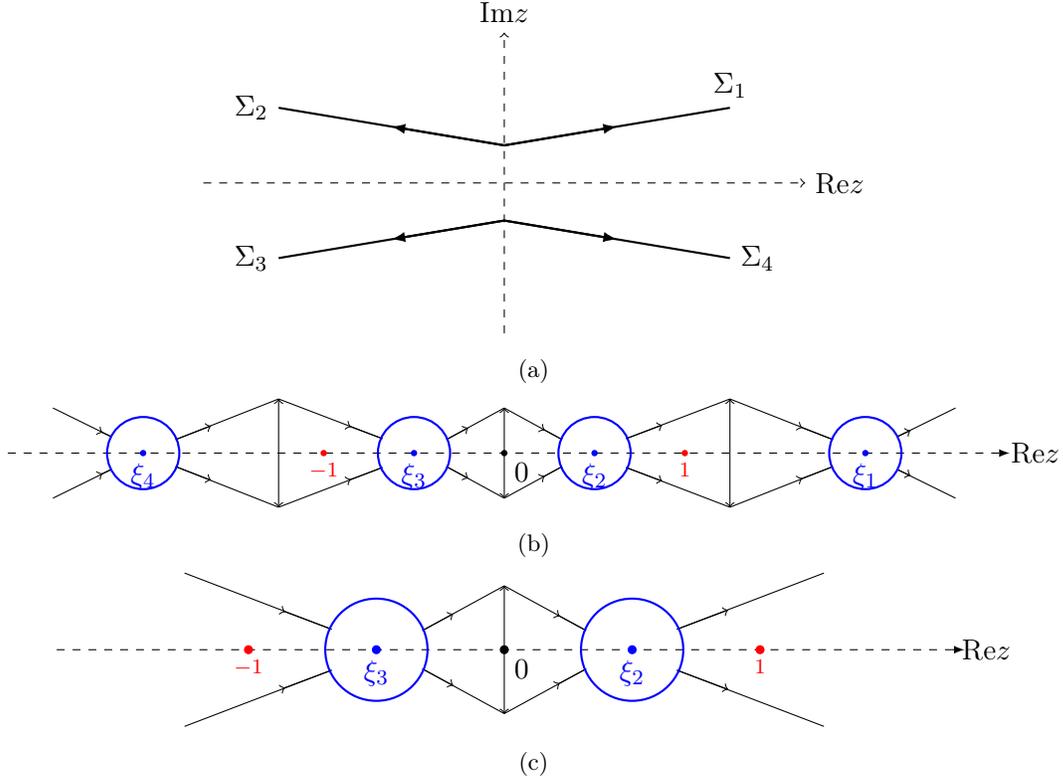

Then, we obtain a RH problem for  $E(z)$.
\begin{RHP}\label{RH-9}
Find a matrix-valued function $E(y,t,z)$ such that
\begin{itemize}
\item $E(y,t,z)$ is analytic in $\mathbb{C}\setminus\Sigma^{(E)}(\xi)$;
\item $E(y,t;z)\to(1~~1)$ as $z\rightarrow \infty$;
\item $E_+(y,t,z)=E_-(y,t,z)V^{(E)}(z)$, \quad $z\in\Sigma^{(E)}(\xi)$, where
\end{itemize}
\begin{align}\label{7-17}
V^{(E)}(z)=\left\{\begin{aligned}
&M^{R}(z)V^{(2)}(z)M^{R}(z)^{-1}, &&z\in\tilde\Sigma(\xi) \setminus \mathcal{U}_{\xi},~~or~~z\in\tilde\Sigma(\xi)\\
&M^{R}(z)M^{(loc)}(z)M^{R}(z)^{-1}, &&z\in\partial\mathcal{U}_{\xi}.
\end{aligned}\right.
\end{align}
\end{RHP}

Next, we evaluate the estimate of the jump matrix $V^{(E)}(z)$.

Based on Propositions \ref{v2-estimate-xi-1},\ref{v2-estimate-xi-2}  and the boundedness of $M^{R}(z)$, as $t\to \infty$, we have
\begin{align}\label{VE-I-1}
|V^{(E)}(z)-\mathbb{I}|=\left\{\begin{aligned}
&O\left(e^{-2\varepsilon_0c_3t}\right) &&z\in\tilde\Sigma(\xi\in(-\infty,-1/4)\cup(2,\infty)),\\
&O\left(e^{-c_4t}\right) &&z\in\tilde\Sigma(\xi\in(-1/4,2)\setminus\mathcal{U}_{\xi},\\
&O\left(e^{-c'_4t}\right) &&z\in\Sigma'_{j\pm}.
\end{aligned}\right.
\end{align}
For $z\in\partial\mathcal{U}_{\xi}$, based on the boundedness of $M^{R}(z)$ and the results of $M^{(loc)}(z)$, we obtain
\begin{align}\label{VE-I-2}
  |V^{(E)}(z)-\mathbb{I}|=|M^{R}(z)(M^{(loc)}(z)-\mathbb I)M^{R}(z)^{-1}|=O(|t|^{-\frac{1}{2}}).
\end{align}

Then, by using a small-norm RH problem, the existence and uniqueness of RH problem \ref{RH-9} can be guaranteed.  Furthermore,  on the basis of Beals-Coifman theory, we obtain that
\begin{align}\label{Ez-solution}
E(z)=\mathbb{I}+\frac{1}{2\pi i}\int_{\Sigma^{(E)}(\xi)}\frac{(\mathbb{I}+\mu_E(s))(V^{(E)}(s)-\mathbb{I})}{s-z}\,ds,
\end{align}
where $\mu_E\in L^2 (\Sigma^{(E)}) $ and satisfies
\begin{align}\label{7-18}
(1-C_{\omega_E})\mu_E=\mathbb{I},
\end{align}
where $C_{\omega_E}$ is an integral operator and  is defined as
\begin{align*}
C_{\omega_E}f=C_{-}\left(f(V^{(E)}-\mathbb{I})\right),\\
C_{-}f(z)=\lim_{z\rightarrow\Sigma_{-}^{(E)}(\xi)}\frac{1}{2\pi i}\int_{\Sigma^{(E)}(\xi)}\frac{f(s)}{s-z}\,ds,
\end{align*}
where $C_{-}$ is the Cauchy projection operator.
Next, according to the properties of the Cauchy projection operator $C_{-}$, and the estimate \eqref{VE-I-1} and \eqref{VE-I-2}, we have
\begin{align}
\|C_{\omega_E}f\|_{L^2(\Sigma^{(E)}(\xi))}\lesssim\|C_-\|_{L^2(\Sigma^{(E)}(\xi))} \|f\|_{L^2(\Sigma^{(E)}(\xi))}
\|V^{(E)}-\mathbb{I}\|_{L^{\infty}(\Sigma^{(E)}(\xi))}\lesssim O(t^{-1/2}),
\end{align}
which implies $1-C_{\omega_E}$ is invertible. Then, the existence and uniqueness of $\mu_E$ is established. As a result,  the existence and uniqueness of $E(z)$ are guaranteed. These facts show  that the definition of $M^{(2)}_{RHP}$ is reasonable.

Furthermore, to reconstruct the solutions of $q(x,t)$, we need to study the asymptotic behavior of $E(z)$ as $z\rightarrow\frac{i}{2}$ and large time asymptotic behavior of $E(z)$. On the basis of \eqref{VE-I-1} and \eqref{VE-I-2}, as $t\rightarrow\infty$, we only need to consider the calculation on $\partial\mathcal{U}_{\xi}$ because it approaches to zero exponentially on other boundaries. Then, as $z\rightarrow\frac{i}{2}$, we can obtain that
\begin{align}\label{7-19}
E(z)=E(\frac{i}{2})+E_{1}(z-\frac{i}{2})+O((z-\frac{i}{2})^{-2}),
\end{align}
where
\begin{gather}
E(\frac{i}{2})=\mathbb{I}+\frac{1}{2\pi i}\int_{\Sigma^{(E)(\xi)}(\xi)}\frac{(\mathbb{I}+\mu_E(s))(V^{(E)}(s)-\mathbb{I})} {s-\frac{i}{2}}\,ds,\\
E_{1}=-\frac{1}{2\pi i}\int_{\Sigma^{(E)}(\xi)}\frac{(\mathbb{I}+\mu_E(s))(V^{(E)}(s)-I)} {(s-\frac{i}{2})^2}\,ds.\label{7-22}
\end{gather}
Then, the long time, i.e., $t\rightarrow\infty$, asymptotic behavior  of $E(\frac{i}{2})$  and $E_{1}$ can be derived.\\
$\bullet$ For $\xi\in(-1/4,0)$, $E(\frac{i}{2})$ can be expressed as
\begin{align}
  E(\frac{i}{2})=(1~~1)+\frac{1}{t^{\frac{1}{2}}}B^{(0)}(\xi)+O(t^{-1}),
\end{align}
where
\begin{align*}
  B^{(0)}(\xi)=&\frac{1}{(z_{0}-\frac{i}{2})} \frac{i}{t^{\frac{1}{2}}} M^{R}(z_0)^{-1}( -\beta_{0}~~\bar{\beta}_{0})M^{R}(z_0)\\
  &-\frac{1}{(-z_{0}-\frac{i}{2})}\frac{i}{t^{\frac{1}{2}}} M^{R}(-z_0)^{-1}( -\beta_{0}~~\bar{\beta}_{0})M^{R}(-z_0)\\
   &+\frac{1}{(z_{1}-\frac{i}{2})} \frac{i}{t^{\frac{1}{2}}} M^{R}(z_1)^{-1}( \bar{\beta}_{1}~~-\beta_{1})M^{R}(z_1)\\
  & -\frac{1}{(-z_{1}-\frac{i}{2})}\frac{i}{t^{\frac{1}{2}}} M^{R}(-z_1)^{-1}( -\beta_{1}~~\bar{\beta}_{1})M^{R}(-z_1)+O(t^{-1}),
\end{align*}
and $E_{1}$ can be expressed as
\begin{align}\label{B-1}
  E_{1}=\frac{1}{t^{\frac{1}{2}}}B^{(1)}(\xi)+O(t^{-1}),
\end{align}
where
\begin{align*}
  B^{(1)}(\xi)=&-\frac{1}{(z_{0}-\frac{i}{2})^{2}} \frac{i}{t^{\frac{1}{2}}} M^{R}(z_0)^{-1}( -\beta_{0}~~\bar{\beta}_{0})M^{R}(z_0)\\
  &+\frac{1}{(-z_{0}-\frac{i}{2})^{2}}\frac{i}{t^{\frac{1}{2}}} M^{R}(-z_0)^{-1}( -\beta_{0}~~\bar{\beta}_{0})M^{R}(-z_0)\\
   &-\frac{1}{(z_{1}-\frac{i}{2})^{2}} \frac{i}{t^{\frac{1}{2}}} M^{R}(z_1)^{-1}( \bar{\beta}_{1}~~-\beta_{1})M^{R}(z_1)\\
   &+\frac{1}{(-z_{1}-\frac{i}{2})^{2}}\frac{i}{t^{\frac{1}{2}}} M^{R}(-z_1)^{-1}( -\beta_{1}~~\bar{\beta}_{1})M^{R}(-z_1)+O(t^{-1}),
\end{align*}
$\bullet$ For $\xi\in(0,2)$, can be expressed as
\begin{align}
  E(\frac{i}{2})=(1~~1)+\frac{1}{t^{\frac{1}{2}}}B^{(0)}(\xi)+O(t^{-1}),
\end{align}
where
\begin{align*}
  B^{(0)}(\xi)=&\frac{1}{(z_{0}-\frac{i}{2})} \frac{i}{t^{\frac{1}{2}}} M^{R}(z_0)^{-1}( -\beta_{0}~~\bar{\beta}_{0})M^{R}(z_0)\\
  &-\frac{1}{(-z_{0}-\frac{i}{2})}\frac{i}{t^{\frac{1}{2}}} M^{R}(-z_0)^{-1}( -\beta_{0}~~\bar{\beta}_{0})M^{R}(-z_0)+O(t^{-1}),
\end{align*}
and $E_{1}$ can be expressed as
\begin{align}
  E_{1}=\frac{1}{t^{\frac{1}{2}}}B^{(1)}(\xi)+O(t^{-1}),
\end{align}
where
\begin{align*}
  B^{(1)}(\xi)=&-\frac{1}{(z_{0}-\frac{i}{2})^2} \frac{i}{t^{\frac{1}{2}}} M^{R}(z_0)^{-1}( -\beta_{0}~~\bar{\beta}_{0})M^{R}(z_0)\\
  &+\frac{1}{(-z_{0}-\frac{i}{2})^2}\frac{i}{t^{\frac{1}{2}}} M^{R}(-z_0)^{-1}( -\beta_{0}~~\bar{\beta}_{0})M^{R}(-z_0)+O(t^{-1}).
\end{align*}

\section{Pure $\bar{\partial}$-RH problem}\label{section-Pure-dbar-RH}

In this section, we will pay attention to the remaining $\bar{\partial}$-RH problem to show that it has a solution and bound its size.

The $\bar{\partial}$-RH problem \ref{RH-4} for $M^{(3)}(z)$ is equivalent to the following integral equation
\begin{align}\label{8.1}
M^{(3)}(z)=\mathbb{I}-\frac{1}{\pi}\int_{\mathbb{C}}\frac{M^{(3)}W^{(3)}}{s-z}\mathrm{d}A(s),
\end{align}
where $\mathrm{d}A(s)$ is Lebesgue measure. Furthermore, the equation \eqref{8.1} can be written in an operator form
\begin{align}\label{8.2}
(\mathbb{I}-\mathrm{S})M^{(3)}(z)=\mathbb{I},
\end{align}
where $\mathrm{S}$ is Cauchy operator
\begin{align}\label{8.3}
\mathrm{S}[f](z)=-\frac{1}{\pi}\iint_{\mathbb{C}}\frac{f(s)W^{(3)}(s)}{s-z}\mathrm{d}A(s).
\end{align}
We need to prove that the  operator $(\mathbb{I}-\mathrm{S})$ is invertible so that the solution $M^{(3)}(z)$ exists.
While according to the previous analysis, we know that  $W^{(3)} (s)$ possesses different properties and structures for the case $\xi\in(-\infty,-\frac{1}{4})\cup(2,\infty)$ and $\xi\in(-\frac{1}{4},2)$. Therefore, we   need to consider it respectively.

\subsection{Region $\xi\in(-\infty,-\frac{1}{4})\cup(2,\infty)$}

Firstly, we give the proof of the existence of operator $(\mathbb{I}-\mathrm{S}^{-1}$.
\begin{lem}\label{lem-1-1}
For $t\rightarrow\infty$, the operator \eqref{8.3} admits that
\begin{align}\label{8.4}
||\mathrm{S}||_{L^{\infty}\rightarrow L^{\infty}}\leq ct^{-1/2},
\end{align}
where $c$ is a constant.
\end{lem}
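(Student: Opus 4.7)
The plan is to bound $\|\mathrm{S}\|_{L^\infty\to L^\infty}$ by establishing the uniform estimate
\begin{equation*}
\sup_{z\in\mathbb{C}}\iint_{\mathbb{C}}\frac{|W^{(3)}(s)|}{|s-z|}\,dA(s)\lesssim t^{-1/2},
\end{equation*}
since then $|\mathrm{S}[f](z)|\leq \pi^{-1}\|f\|_{L^\infty}$ times this double integral. The first step is to replace $W^{(3)}$ by $|\bar{\partial}R^{(2)}|$: because $M^R$ and $M^R_{j_0}$ are bounded on the support of $\bar{\partial}R^{(2)}$ (this support stays at distance $\geq\rho/3$ from $\mathcal{Z}\cup\bar{\mathcal{Z}}$ by Proposition \ref{R-property-1}), and $E(z)$ is uniformly close to $\mathbb I$ on the same support by \eqref{VE-I-1}, we get $|W^{(3)}(s)|\lesssim |\bar{\partial}R^{(2)}(s)|$. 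Using the $z\mapsto -z$ and $z\mapsto\bar z$ symmetries together with the fact that $\bar{\partial}R^{(2)}$ vanishes outside $\Omega(\xi)=\bigcup_{j=1}^{4}\Omega_j$, I reduce to estimating the integral over a single region, say $\Omega_1=\Omega_{10}\cup\Omega_{11}$, writing $s=u+iv$.

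The next step is to combine Proposition \ref{R-property-1} with the sign of $\text{Re}(2i\theta)$ from Figure \ref{fig-1}. For $\xi\in(-\infty,-\tfrac14)\cup(2,\infty)$, $\theta$ has no stationary points on $\mathbb{R}$, and because $\Omega_{11}$ sits in the wedge where $\text{Re}(2i\theta)<0$ and $\varepsilon_0$ was chosen so that $\Omega_{10}$ is a thin strip not meeting the sign boundary, one obtains a lower bound
\begin{equation*}
|e^{-2it\theta(s)}|\leq e^{-c\, t\, v},\qquad s\in\Omega_1,
\end{equation*}
for a constant $c>0$ depending on $\phi$ and $\varepsilon_0$. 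Together with the $\bar{\partial}$-estimate $|\bar{\partial}R_1(s)|\lesssim |s-i\varepsilon_0|^{-1/2}+|p_1'(u)|$, this splits the integrand into two pieces $I_1$ (with the power singularity) and $I_2$ (with the derivative of the reflection coefficient), which will be handled separately.

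For $I_2$, I would use Cauchy--Schwarz in $u$ for fixed $v$: since $r\in H^{1,1}(\mathbb R)$ by Proposition \ref{H-r-space}, we have $p_1'\in L^2(\mathbb R)$, and a direct computation gives $\|(s-z)^{-1}\|_{L^2_u(\mathbb R)}=\sqrt{\pi/|v-\text{Im}\,z|}$. Hence
\begin{equation*}
\int_{\mathbb R}\frac{|p_1'(u)|\, e^{-ctv}}{|s-z|}\,du\lesssim \|p_1'\|_{L^2}\,|v-\text{Im}\,z|^{-1/2}e^{-ctv},
\end{equation*}
and integrating in $v\in(0,\infty)$ together with the substitution $w=tv$ yields $I_2\lesssim t^{-1/2}$ uniformly in $z$. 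For $I_1$, apply Hölder's inequality in $u$ with some fixed $p>2$: one factor is $\| |\cdot-i(\varepsilon_0-v)|^{-1/2}\|_{L^p(\mathbb R)}\lesssim v^{1/p-1/2}$, the other is $\|(s-z)^{-1}\|_{L^{p'}(\mathbb R)}\lesssim |v-\text{Im}\,z|^{1/p'-1}$. Then
\begin{equation*}
I_1\lesssim \int_0^\infty e^{-ctv}\, v^{1/p-1/2}|v-\text{Im}\,z|^{1/p'-1}\,dv,
\end{equation*}
which, after again substituting $w=tv$, is bounded by $t^{-1/2}$ for a suitable choice of $p$ (e.g.\ $p=4$).

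The main technical obstacle is controlling the Hölder factor $|v-\text{Im}\,z|^{1/p'-1}$ uniformly when $\text{Im}\,z$ falls inside the $v$-range of integration, since this integrand has a local singularity at $v=\text{Im}\,z$. The resolution is to choose $p$ so that $1/p'-1>-1$ (i.e.\ $p>1$ trivially) and to verify that the remaining product $v^{1/p-1/2}|v-\text{Im}\,z|^{1/p'-1}$ has a uniformly controlled $L^1_v(e^{-ctv}\,dv)$ norm of order $t^{-1/2}$; this is a routine scaling argument once $p$ is fixed. Repeating the same estimates on the three symmetric wedges $\Omega_2,\Omega_3,\Omega_4$ and summing finishes the proof.
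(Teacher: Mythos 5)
Your proposal is correct and follows essentially the same route as the paper: bound the conjugating factors $M^{(2)}_{RHP}$, $\bigl(M^{(2)}_{RHP}\bigr)^{-1}$ to reduce to $\iint_{\Omega_1}|\bar{\partial}R_1||e^{-2it\theta}|\,|s-z|^{-1}\,dA$, split via \eqref{R-estimate} into the same two pieces $I_1$ and $I_2$, and control them with the $L^2_u$ bound on $(s-z)^{-1}$ (Cauchy--Schwarz for the $|p'|$ piece, H\"older with $p>2$ for the $|s-i\varepsilon_0|^{-1/2}$ piece). You simply write out the standard details that the paper compresses into ``we can prove that $I_1,I_2\lesssim t^{-1/2}$''; the only cosmetic slip is that the $L^p_u$ norm of $|s-i\varepsilon_0|^{-1/2}$ scales like $|v-\varepsilon_0|^{1/p-1/2}$ rather than $v^{1/p-1/2}$, which does not affect the conclusion.
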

\begin{proof}
We mainly prove the case that the matrix function supported in the region $\Omega_1$, the others can be proved similarly. Denote $f\in L^{\infty}(\Omega_1)$, $s=u+iv$ and $z=x+iy$. Then based on \eqref{dbar-R2-1} and \eqref{RH-4-dbar}, we can derive that
\begin{align}\label{8.5}
|S[f](z)|&\leq\frac{1}{\pi}\big|f\ \big|_{L^{\infty}(\Omega_{1})}
\iint_{\Omega_{1}}\frac{|M^{(2)}_{RHP}(s)\bar{\partial}R_{1}(s)M^{(2)}_{RHP}(s)^{-1}|}
{|s-z|}df(s)\notag\\
&\leq c\iint_{\Omega_{1}}
\frac{|\bar{\partial}R_{1}(s)||e^{-2it\theta(z)}}{|s-z|}dudv,
\end{align}
where $c$ is a constant.

Based on \eqref{R-estimate}, from \eqref{8.5}, we obtain that
\begin{align}\label{8.6}
||\mathrm{S}||_{L^{\infty}\rightarrow L^{\infty}}\leq c(I_{1}+I_{2})\leq ct^{-1/2},
\end{align}
where
\begin{align}\label{8.8}
I_{1}=\iint_{\Omega_{1}}
\frac{|z-i\varepsilon_0|^{-1/2}
e^{2tIm\theta(z)}}{|s-z|}df(s), ~~
I_{2}=\iint_{\Omega_{1}}
\frac{|p'(u)|e^{2tIm\theta(z)}}{|s-z|}df(s).
\end{align}
By using a fact that
\begin{align}\label{s-z-0}
\Big|\Big|\frac{1}{s-z}\Big|\Big|_{L^{2}(v+z_{0},\infty)}=\left(\int_{v+z_{0}}^{\infty}
\frac{1}{|s-z|^{2}}du\right)^{\frac{1}{2}}
\leq\frac{\pi}{v-y},
\end{align}
we can prove that $I_{1},I_{2}\lesssim t^{-1/2}$.
\end{proof}

Next, our purpose is to reconstruct the large time asymptotic behaviors of $q(x,t)$. According to \eqref{solution-2}, we need the large time asymptotic behaviors of  $M^{(3)}(\frac{i}{2})$  and $M_{1}^{(3)}(y,t)$ which are defined as
\begin{align*}
M^{(3)}(z)=M^{(3)}(\frac{i}{2})+M_{1}^{(3)}(y,t)(z-\frac{i}{2}) +O((z-\frac{i}{2})^{2}),~~z\rightarrow\frac{i}{2} ,
\end{align*}
where
\begin{align*}
M^{(3)}(\frac{i}{2})=\mathbb{I}-\frac{1}{\pi}\iint_{\mathbb{C}} \frac{M^{(3)}(s)W^{(3)}(s)}{s-\frac{i}{2}}
\mathrm{d}A(s),\\
M^{(3)}_{1}(y,t)=\frac{1}{\pi}\int_{\mathbb{C}} \frac{M^{(3)}(s)W^{(3)}(s)}{(s-\frac{i}{2})^{2}}
\mathrm{d}A(s).
\end{align*}
The $M^{(3)}(\frac{i}{2})$ and $M^{(3)}_{1}(y,t)$ satisfy the following lemma.
\begin{lem}\label{prop-M3-Est}
For $t\rightarrow+\infty$, there exists a positive constant $\tau<\frac{1}{4}$ such that $M^{(3)}(\frac{i}{2})$ and $M^{(3)}_{1}(y,t)$ admit the following inequality
\begin{align}
\|M^{(3)}(\frac{i}{2})-\mathbb{I}\|_{L^{\infty}}\lesssim t^{-1+2\tau},\label{8.10}\\
M^{(3)}_{1}(y,t)\lesssim t^{-1+2\tau}\label{8.11}.
\end{align}
\end{lem}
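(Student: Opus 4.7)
The plan is to deduce both estimates directly from the integral representations
\begin{align*}
M^{(3)}(\tfrac{i}{2})-\mathbb{I} &= -\frac{1}{\pi}\iint_{\mathbb{C}}\frac{M^{(3)}(s)W^{(3)}(s)}{s-\tfrac{i}{2}}\,\rmd A(s),\\
M^{(3)}_{1}(y,t) &= \frac{1}{\pi}\iint_{\mathbb{C}}\frac{M^{(3)}(s)W^{(3)}(s)}{(s-\tfrac{i}{2})^{2}}\,\rmd A(s),
\end{align*}
by controlling the area integrals of the integrand. First, Lemma \ref{lem-1-1} shows that for $t\gg 1$ the operator $\mathbb{I}-\mathrm{S}$ is invertible in $L^{\infty}$ by Neumann series, yielding $\|M^{(3)}\|_{L^{\infty}(\mathbb{C})}\lesssim 1$. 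Since the regime $\xi\in(-\infty,-\tfrac{1}{4})\cup(2,\infty)$ forces $\mathcal{U}_{\xi}=\emptyset$, the decomposition \eqref{decompose-Mrhp} reads $M^{(2)}_{RHP}=E\cdot M^{R}$ with both factors and their inverses uniformly bounded on the support of $\bar\partial R^{(2)}$, so that $|W^{(3)}|\lesssim |\bar\partial R^{(2)}|\,|e^{\pm 2it\theta}|$.

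The support of $\bar\partial R^{(2)}$ lies in $\Omega_{10}\cup\Omega_{20}\cup\Omega_{30}\cup\Omega_{40}$, each a horizontal strip of width $\varepsilon_{0}<\tfrac{1}{2}$ along the real axis; consequently $|s-\tfrac{i}{2}|\geq \tfrac{1}{2}-\varepsilon_{0}>0$ on this support, so the Cauchy kernels $(s-\tfrac{i}{2})^{-1}$ and $(s-\tfrac{i}{2})^{-2}$ contribute only bounded factors. Both \eqref{8.10} and \eqref{8.11} therefore reduce to estimating
\[
I_{j}(t) := \iint_{\Omega_{j0}}|\bar\partial R_{j}(s)|\,e^{-2t|\mathrm{Im}\,\theta(s)|}\,\rmd A(s),\qquad j=1,2,3,4,
\]
where, by Proposition \ref{R-property-1}, $|\bar\partial R_{j}(s)|\lesssim |s-i\varepsilon_{0}|^{-1/2}+|p_{j}'(\mathrm{Re}\,s)|$ on $\Omega_{j0}$.

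The decisive point is that for $\xi\notin[-\tfrac{1}{4},2]$ the phase $\theta$ has no real stationary points, so \eqref{real-theta} together with the sign diagrams of Fig.~\ref{fig-1} gives a quantitative lower bound of the form $|\mathrm{Im}\,\theta(u+iv)|\gtrsim v\,\langle u\rangle^{-2}$ on $\Omega_{j0}$. Splitting $I_{j}=I_{j}^{(a)}+I_{j}^{(b)}$ according to the two contributions in Proposition \ref{R-property-1}, each piece is handled by a H\"older--type argument: for $I_{j}^{(a)}$ one uses Cauchy--Schwarz in $u$ against $(u^{2}+(v-\varepsilon_{0})^{2})^{-1/4}\in L^{p}_{u}$, $p>2$, while $I_{j}^{(b)}$ is estimated using $p_{j}'\in L^{2}(\mathbb{R})$, which is inherited from $r\in H^{1,1}(\mathbb{R})$ (Proposition \ref{H-r-space}) and the smoothness of $T$ (Proposition \ref{T-property}). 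After performing the $v$-integration of $e^{-ctv/\langle u\rangle^{2}}$ on $(0,\varepsilon_{0})$, both pieces yield $I_{j}^{(a)}, I_{j}^{(b)}\lesssim t^{-3/4}$. Since $t^{-3/4}\leq t^{-1+2\tau}$ whenever $\tau\geq \tfrac{1}{8}$, any choice $\tfrac{1}{8}\leq \tau<\tfrac{1}{4}$ establishes \eqref{8.10} and \eqref{8.11}.

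The main technical obstacle is balancing the $v$-integration against the $u$-H\"older step: after $v$-integration the exponential gives a factor of order $\langle u\rangle^{2}/t$, which must be absorbed by the joint $L^{p}(u)$ control of $|s-i\varepsilon_{0}|^{-1/2}$ and $|p_{j}'(u)|$. The slack in allowing $\tau<\tfrac{1}{4}$ rather than a sharp exponent provides the flexibility to accommodate both the $L^{2}\to L^{\infty}$ losses in the Cauchy--Schwarz step and the mild singularity of $|s-i\varepsilon_{0}|^{-1/2}$ along $\{\mathrm{Im}\,s=\varepsilon_{0}\}$; sharper tracking would give an exponential rate here, but the stated polynomial bound suffices and harmonises with the stationary-phase regimes treated later in the paper.
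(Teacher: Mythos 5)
Your overall strategy is the one the paper intends: the paper's own ``proof'' is a single sentence deferring to Lemma \ref{lem-1-1}, and your reduction --- Neumann inversion of $\mathbb{I}-\mathrm{S}$ to get $\|M^{(3)}\|_{L^{\infty}}\lesssim 1$, boundedness of $M^{(2)}_{RHP}$ and its inverse on the support of $\bar\partial R^{(2)}$ (legitimate here because $\mathcal{U}_{\xi}=\emptyset$ and the support stays at distance at least $\rho/3$ from the poles), and the observation that $|s-\tfrac{i}{2}|\ge\tfrac12-\varepsilon_{0}$ on the strips so that both Cauchy kernels are harmless --- is exactly the right skeleton and is considerably more explicit than anything written in the paper.

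The gap is in the phase estimate. The lower bound you invoke, $|\mathrm{Im}\,\theta(u+iv)|\gtrsim v\langle u\rangle^{-2}$, is too weak to yield $I_{j}\lesssim t^{-3/4}$: the $v$-integration then produces a factor $\min\{\varepsilon_{0},\langle u\rangle^{2}/(ct)\}$, and since the only $u$-decay available is $|s-i\varepsilon_{0}|^{-1/2}\sim|u|^{-1/2}$ for $I_{j}^{(a)}$, or merely $p_{j}'\in L^{2}_{u}$ paired with the kernel decay $\langle u\rangle^{-1}$ for $I_{j}^{(b)}$, the growing factor $\langle u\rangle^{2}$ cannot be absorbed; carrying out your Cauchy--Schwarz step with this bound gives only $O(t^{-1/4})$, which is not $\lesssim t^{-1+2\tau}$ for any $\tau<\tfrac14$. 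You acknowledge this tension in your closing paragraph but do not resolve it. The repair is that for $\xi\in(-\infty,-\tfrac14)\cup(2,\infty)$ the correct bound is uniform in $u$: writing $\theta'(u)=\xi-\frac{2(1-4u^{2})}{(1+4u^{2})^{2}}$ and noting that the function $\frac{2(1-4u^{2})}{(1+4u^{2})^{2}}$ has range exactly $[-\tfrac14,2]$ on $\R$, one gets $|\theta'(u)|\ge\min\{|\xi+\tfrac14|,\,|\xi-2|\}>0$ for all real $u$, hence $|\mathrm{Im}\,\theta(u+iv)|\ge c(\xi)\,|v|$ on the strips $0<|v|<\varepsilon_{0}$ after shrinking $\varepsilon_{0}$ if necessary. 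With this, $\int_{0}^{\varepsilon_{0}}e^{-2ctv}\,\rmd v\lesssim t^{-1}$ uniformly in $u$, the remaining $u$-integrals converge by exactly the H\"older pairings you describe (pairing $p_{j}'\in L^{2}_{u}$ with $\langle u\rangle^{-1}\in L^{2}_{u}$ coming from the kernel, and integrating $|u|^{-1/2}\langle u\rangle^{-1}$ directly), and you obtain $I_{j}\lesssim t^{-1}$, which is stronger than the stated $t^{-1+2\tau}$ and certainly suffices. With this one correction your argument closes.
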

The proof of this Lemma is similar to Lemma \ref{lem-1-1}.

\subsection{Region $\xi\in(-\frac{1}{4},2)$}

The first step is still to prove the existence of operator $(\mathbb{I}-\mathrm{S}^{-1}$.
\begin{lem}\label{lem-1-2}
For $t\rightarrow\infty$, the operator \eqref{8.3} admits that
\begin{align}\label{8.4+1}
||\mathrm{S}||_{L^{\infty}\rightarrow L^{\infty}}\leq ct^{-1/2},
\end{align}
where $c$ is a constant.
\end{lem}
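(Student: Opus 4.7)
The plan is to mirror the strategy used for Lemma 1.1, but now with the support of $\bar{\partial}R^{(2)}$ consisting of sectors $\Omega_{kj}$ attached to each stationary point $\xi_k$ on the real axis rather than sectors attached to $\pm i\varepsilon_0$. By the $-z$-symmetry and the analogous structure of the four sectors around each $\xi_k$, it is enough to establish the estimate for one representative region, say $\Omega_{11}$. First I would pull $\|f\|_{L^{\infty}}$ out of the integral in \eqref{8.3} and use the boundedness of $M^{(2)}_{RHP}$ on $\Omega_{11}$ (uniform in $t$, coming from the boundedness of $E(z)$, the finite-soliton model $M^{R}_{j_0}$, and the local parabolic-cylinder model $M^{(loc)}$, see (7.15)) to reduce the task to bounding
\begin{equation*}
\mathrm{S}_{11}[f](z)\ \lesssim\ \|f\|_{L^{\infty}}\iint_{\Omega_{11}}\frac{|\bar{\partial}R_{11}(s)|\,e^{2t\,\mathrm{Im}\,\theta(s)}}{|s-z|}\,dA(s).
\end{equation*}

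Next I would invoke Proposition 6.2 to split $|\bar{\partial}R_{11}(s)|\lesssim |s-\xi_1|^{-1/2}+|p_{11}'(\mathrm{Re}\,s)|$, which produces two integrals $I_1$ and $I_2$ of the same shape as (8.8), except that now the singular base point is the real stationary point $\xi_1$ rather than $i\varepsilon_0$. The crucial analytic input is the local behavior of the phase: using $\theta'(\xi_k)=0$ and $\theta''(\xi_k)\neq 0$ (an easy computation from \eqref{4.1}), on the ray $\xi_k+e^{i\phi}\mathbb{R}_+$ one has
\begin{equation*}
\mathrm{Im}\,\theta(s)\ \leq\ -c\,|s-\xi_k|^{2},\qquad s\in\Omega_{kj},
\end{equation*}
with $c>0$ depending on $\phi$ and $\theta''(\xi_k)$; the sign is exactly the one dictated by the shaded regions of Fig.~1. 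This Gaussian decay in the distance to $\xi_k$ is what will ultimately generate the $t^{-1/2}$ factor.

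Parametrizing $s=\xi_1+(u+iv)$ with the ray directions, I would for $I_1$ use Cauchy--Schwarz on horizontal slices: the $u$-integral of $(s-z)^{-1}$ in $L^2$ is controlled by $|v-\mathrm{Im}\,z|^{-1/2}$ (as in \eqref{s-z-0}), while $|s-\xi_1|^{-1/2}$ sits in $L^q$ on the same slice for a range of $q$ with an explicit bound in $v$. Then a Gaussian integral
$\int_0^\infty e^{-ctv^2}\,v^{-\alpha}\,dv\lesssim t^{-(1-\alpha)/2}$
for an appropriate $\alpha<1$ yields $I_1\lesssim t^{-1/2}$. For $I_2$, the factor $p_{11}'(\mathrm{Re}\,s)$ lies in $L^2$ of the real line since $r\in H^{1,1}(\mathbb{R})$ by Proposition 2.2, and a parallel Cauchy--Schwarz/Gaussian argument delivers the same $t^{-1/2}$ bound.

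The main obstacle is the interaction between the Cauchy kernel $|s-z|^{-1}$ and the mildly singular factor $|s-\xi_k|^{-1/2}$ in a neighborhood where $z$ may itself approach $\xi_k$; the remedy is to carry out the $L^p$--$L^q$ Hölder estimates uniformly in $z\in\Omega_{kj}$ and to exploit the fact that the Gaussian $e^{-ctv^2}$ localizes the integration within a strip of width $O(t^{-1/2})$ around the real axis, so that the resulting bounds are genuinely uniform in $z$. Summing the contributions over $k=1,\ldots,n(\xi)$ and $j=1,2,3,4$ gives the claimed estimate $\|\mathrm{S}\|_{L^{\infty}\to L^{\infty}}\lesssim t^{-1/2}$.
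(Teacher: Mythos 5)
Your proposal follows essentially the same route as the paper: pull out $\|f\|_{L^{\infty}}$ and the bounded conjugation by $M^{(2)}_{RHP}$, split $|\bar{\partial}R_{kj}|$ via Proposition \ref{R-property-2} into the $|s-\xi_k|^{-1/2}$ and $|p_{kj}'(\mathrm{Re}\,s)|$ pieces, and bound the two resulting integrals using the $L^2$ estimate \eqref{s-z-0} on the Cauchy kernel together with the quadratic decay of the phase near the stationary points. The paper states this in compressed form (reducing to $\tilde I_1,\tilde I_2$ and asserting the $t^{-1/2}$ bound), whereas you supply the Cauchy--Schwarz-on-slices and Gaussian-integral details that the paper leaves implicit; the argument is correct.
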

\begin{proof}
We mainly prove the case that the matrix function supported in the region $\Omega_{11}$, the others can be proved similarly. Denote $f\in L^{\infty}(\Omega_{11})$, $s=u+iv$ and $z=x+iy$. Then based on \eqref{dbar-R2-3} and \eqref{RH-4-dbar}, we can derive that
\begin{align}\label{8.5+1}
|S[f](z)|&\leq\frac{1}{\pi}\big|f\ \big|_{L^{\infty}(\Omega_{11})}
\iint_{\Omega_{11}}\frac{|M^{(2)}_{RHP}(s)\bar{\partial}R_{1}(s)M^{(2)}_{RHP}(s)^{-1}|}
{|s-z|}df(s)\notag\\
&\leq c\iint_{\Omega_{11}}
\frac{|\bar{\partial}R_{11}(s)||e^{-2it\theta(z)}}{|s-z|}dudv,
\end{align}
where $c$ is a constant.

Based on \eqref{R-estimate-2}, from \eqref{8.5+1}, we obtain that
\begin{align}\label{8.6+1}
||\mathrm{S}||_{L^{\infty}\rightarrow L^{\infty}}\leq c(\tilde{I}_{1}+\tilde{I}_{2})\leq ct^{-1/2},
\end{align}
where
\begin{align}\label{8.8+1}
\tilde{I}_{1}=\iint_{\Omega_{11}}
\frac{|z-\xi_1|^{-1/2}
e^{2tIm\theta(z)}}{|s-z|}df(s), ~~
\tilde{I}_{2}=\iint_{\Omega_{11}}
\frac{|p'(u)|e^{2tIm\theta(z)}}{|s-z|}df(s).
\end{align}
By using \eqref{s-z-0},
we can prove that $\tilde{I}_{1},\tilde{I}_{2}\lesssim t^{-1/2}$.
\end{proof}

Next, our purpose is to reconstruct the large time asymptotic behaviors of $q(x,t)$. According to \eqref{solution-2}, we need the large time asymptotic behaviors of  $M^{(3)}(\frac{i}{2})$  and $M_{1}^{(3)}(y,t)$ which are defined as
\begin{align*}
M^{(3)}(z)=M^{(3)}(\frac{i}{2})+M_{1}^{(3)}(y,t)(z-\frac{i}{2}) +O((z-\frac{i}{2})^{2}),~~z\rightarrow\frac{i}{2} ,
\end{align*}
where
\begin{align*}
M^{(3)}(\frac{i}{2})=\mathbb{I}-\frac{1}{\pi}\iint_{\mathbb{C}} \frac{M^{(3)}(s)W^{(3)}(s)}{s-\frac{i}{2}}
\mathrm{d}A(s),\\
M^{(3)}_{1}(y,t)=\frac{1}{\pi}\int_{\mathbb{C}} \frac{M^{(3)}(s)W^{(3)}(s)}{(s-\frac{i}{2})^{2}}
\mathrm{d}A(s).
\end{align*}
The $M^{(3)}(\frac{i}{2})$ and $M^{(3)}_{1}(y,t)$ satisfy the following lemma.
\begin{lem}\label{prop-M3-Est-2}
For $t\rightarrow+\infty$,  $M^{(3)}(\frac{i}{2})$ and $M^{(3)}_{1}(y,t)$ admit the following inequality
\begin{align}
\|M^{(3)}(\frac{i}{2})-\mathbb{I}\|_{L^{\infty}}\lesssim t^{-1},\label{8-10+1}\\
M^{(3)}_{1}(y,t)\lesssim t^{-1}\label{8-11+1}.
\end{align}
\end{lem}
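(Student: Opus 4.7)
The plan is to mirror the argument of Lemma~\ref{lem-1-1} (using the operator bound of Lemma~\ref{lem-1-2} in place of Lemma~\ref{lem-1-1}), but to extract the improved decay rate $t^{-1}$ rather than $t^{-1+2\tau}$ by exploiting the quadratic vanishing of $\operatorname{Im}(i\theta)$ near the stationary points $\xi_k$. First, I would start from the Cauchy integral representations
\begin{align*}
M^{(3)}(\tfrac{i}{2}) - \mathbb{I} &= -\frac{1}{\pi}\iint_{\mathbb{C}} \frac{M^{(3)}(s)\, W^{(3)}(s)}{s - \tfrac{i}{2}}\, dA(s),\\
M^{(3)}_{1}(y,t) &= \frac{1}{\pi}\iint_{\mathbb{C}} \frac{M^{(3)}(s)\, W^{(3)}(s)}{(s - \tfrac{i}{2})^{2}}\, dA(s),
\end{align*}
which come from expanding the solution of \eqref{8.1} around $z = \tfrac{i}{2}$. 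Lemma~\ref{lem-1-2} gives $\|\mathrm{S}\|_{L^{\infty}\to L^{\infty}} \lesssim t^{-1/2}$, so for large $t$ the operator $\mathbb{I} - \mathrm{S}$ is invertible and $\|M^{(3)}\|_{L^{\infty}(\mathbb{C})} \lesssim 1$; combined with the uniform boundedness of $M^{(2)}_{RHP}$ established in Sections~\ref{section-local-model} and~8, it remains to estimate
$$
\iint_{\Omega_{kj}} \frac{|\bar{\partial}R_{kj}(s)|\, e^{-2t|\operatorname{Im}\theta(s)|}}{|s - \tfrac{i}{2}|^{k}}\, dA(s), \qquad k = 1, 2.
$$

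Second, I would localize to a single wedge $\Omega_{kj}$ adjacent to the stationary point $\xi_k$ and use the Taylor expansion $\theta(s) = \theta(\xi_k) + \tfrac{1}{2}\theta''(\xi_k)(s-\xi_k)^{2} + O(|s-\xi_k|^{3})$. Writing $s - \xi_k = u + iv$ in suitable local coordinates, this yields $|e^{\pm 2it\theta(s)}| \lesssim \exp(-c\, t\, u v)$ with a constant $c = c(\xi) > 0$ throughout the wedge. The bound from Proposition~\ref{R-property-2} splits $|\bar{\partial}R_{kj}(s)|$ into the singular part $|s-\xi_k|^{-1/2}$ and the smooth part $|p'_{kj}(\operatorname{Re} s)|$, and each piece is controlled exactly as in the proof of Lemma~\ref{lem-1-2}: one bounds $\|1/(s-z)\|_{L^{2}_{u}}$ (with the factor $|s - \tfrac{i}{2}|^{-k}$ playing the role of $|s - z|^{-1}$, uniformly bounded since $\tfrac{i}{2} \notin \Omega_{kj}$), applies Cauchy--Schwarz in $u$, and then integrates in $v$ against the Gaussian-type weight $e^{-c t u v}$.

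Third, a parabolic rescaling $u \mapsto u/\sqrt{t}$, $v \mapsto v/\sqrt{t}$ converts the singular double integral into $t^{-1}$ times a finite $t$-independent integral, which produces the estimate $t^{-1}$; the $|p'_{kj}|$ contribution is controlled by the $H^{1,1}$-norm of $r$ and gives the same rate by the identical scaling. Summing over the finitely many wedges $\Omega_{kj}$ (with the complementary contributions being exponentially small by Proposition~\ref{v2-estimate-xi-2}) yields \eqref{8-10+1} and \eqref{8-11+1}.

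The main obstacle is the interplay between the tip singularity $|s-\xi_k|^{-1/2}$ of $\bar{\partial}R_{kj}$ and the quadratic (rather than linear, as in Lemma~\ref{lem-1-1}) decay of $|\operatorname{Im}\theta|$ near $\xi_k$: the scaling must be done carefully so that no logarithmic factor in $t$ sneaks in. In addition, one must verify that $\tfrac{i}{2}$ stays bounded away from every $\Omega_{kj}$, so that $|s - \tfrac{i}{2}|^{-k}$ is uniformly bounded on the support of $\bar{\partial}R^{(2)}$; this holds because the $\xi_k$ are real while $\tfrac{i}{2}$ is strictly imaginary and the wedges have small opening angle $\phi \le \pi/4$, so they remain close to the real axis.
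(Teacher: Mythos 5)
Your overall framework is the right one, and it is in fact far more detailed than what the paper itself offers: the paper's entire ``proof'' of Lemma \ref{prop-M3-Est-2} is the single sentence that it is similar to Lemma \ref{lem-1-2}, so beyond the operator bound $\|\mathrm S\|_{L^{\infty}\to L^{\infty}}\lesssim t^{-1/2}$ and the Cauchy-integral representations of $M^{(3)}(\tfrac i2)$ and $M^{(3)}_{1}$ there is nothing written down to compare against. Your identification of the relevant wedge integrals, the observation that the kernels $|s-\tfrac i2|^{-k}$ are uniformly bounded because $\tfrac i2$ stays away from the support of $\bar\partial R^{(2)}$, and the reduction to $\iint_{\Omega_{kj}}|\bar\partial R_{kj}(s)|e^{-2t|\operatorname{Im}\theta(s)|}\,dA(s)$ with $|\operatorname{Im}\theta|\gtrsim uv$ near $\xi_k$ are all correct and are exactly what an honest proof must do.

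The gap is in your third step. The parabolic rescaling $u\mapsto u/\sqrt t$, $v\mapsto v/\sqrt t$ does produce a Jacobian $t^{-1}$, but the singular factor from \eqref{R-estimate-2} scales the wrong way: $|s-\xi_k|^{-1/2}=t^{1/4}|\tilde s|^{-1/2}$, so the net power is $t^{-1}\cdot t^{1/4}=t^{-3/4}$, not $t^{-1}$. The smooth piece gives the same rate: by Cauchy--Schwarz in $u$ one gets $\int_{v}^{\infty}|p_{kj}'(u)|e^{-ctuv}\,du\lesssim\|p_{kj}'\|_{L^{2}}(tv)^{-1/2}e^{-ctv^{2}}$, and integrating $(tv)^{-1/2}e^{-ctv^{2}}$ in $v$ yields $t^{-1/2}\cdot t^{-1/4}=t^{-3/4}$; with only $p_{kj}'\in L^{2}$ this is sharp. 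So the argument you describe proves $\|M^{(3)}(\tfrac i2)-\mathbb I\|\lesssim t^{-3/4}$ and $|M^{(3)}_{1}|\lesssim t^{-3/4}$ --- the rate obtained in the NLS and mCH analogues that this paper follows --- but it does not reach the $t^{-1}$ asserted in \eqref{8-10+1}--\eqref{8-11+1}. The obstacle you flag (avoiding a logarithmic factor) is not the real danger; the loss is a full power $t^{1/4}$, and closing it would require either a stronger vanishing of $\bar\partial R_{kj}$ near $\xi_k$ than Proposition \ref{R-property-2} provides or some additional cancellation, neither of which you (or the paper) supply.
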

The proof of this Lemma is similar to Lemma \ref{lem-1-2}.

\section{Asymptotic approximation  for the CH equation}\label{Asy-appro-CH}

Now, we are going to construct the long-time asymptotic of the CH equation \eqref{CH-equation}.
Recall a series of transformations including \eqref{Trans-1},\eqref{Trans-1+1},\eqref{Trans-2},\eqref{delate-pure-RHP} and \eqref{decompose-Mrhp}, i.e.,
\begin{align*}
M(z)\leftrightarrows M^{(1)}(z)\leftrightarrows M^{(2)}(z)\leftrightarrows M^{(3)}(z) \leftrightarrows E(z),
\end{align*}
we then obtain
\begin{align*}
M(z)=M^{(3)}(z)E(z)M^{R}(z)(R^{(2)}(z))^{-1}T^{\sigma_{3}}(z),~~ z\in\mathbb{C}\setminus\mathcal{U}_{\xi}.
\end{align*}
In order to recover the solution $q(x,t)$ , we take $z\rightarrow\frac{i}{2}$ along the imaginary axis, which implies $z\in\mathbb C\setminus\bar{\Omega}$ thus $R^{(2)}(z)=\mathbb I$. Then, as $z\to\frac{i}{2}$,  we obtain
\begin{align*}
M(z)=&\left(M^{(3)}(i/2)+M^{(3)}_1(z-i/2)\right)E(z,\xi)M^{R}_{j_0}(z)\\
&\times T^{\sigma_{3}}(\frac{i}{2})(1+(z-\frac{i}{2})T^{\sigma_{3}}_{1})+O((z-i/2)^2),
\end{align*}
where $T_{1}$ is defined in \eqref{4.10}.
As $t\to\infty$,  the long time asymptotic behavior of $M(z)$ can be derived.\\
$\bullet$ For $\xi\in\left(-\infty,-\frac{1}{4}\right)\cup(2,\infty)$, we have
\begin{align*}
M(z)=M^{R}_{j_0}(z)T(\frac{i}{2})(1+(z-\frac{i}{2})T_{1})+O(t^{-1+2\tau}),
\end{align*}
and
\begin{align*}
M(i/2)=M^{R}_{j_0}(i/2)T^{\sigma_{3}}(\frac{i}{2})+O(t^{-1+2\tau}).
\end{align*}

Furthermore, based on  \eqref{solution-2}, we can derive that
\begin{align*}
q(x,t)=&q(y(x,t),t)=\frac{1}{2i}\lim_{z\to\frac{i}{2}}\left(\frac{M_1(y,t;z)M_2(y,t;z)} {M_1(y,t;\frac{i}{2})M_2(y,t;\frac{i}{2})}-1\right)\frac{1}{z-\frac{i}{2}}\\
=&\frac{1}{2i}\lim_{z\to\frac{i}{2}}\left(\frac{M^{R}_{j_0,1}(y,t;z)M^{R}_{j_0,2}(y,t;z)} {M^{R}_{j_0,1}(y,t;\frac{i}{2})M^{R}_{j_0,2}(y,t;\frac{i}{2})}-1\right) \frac{1}{z-\frac{i}{2}}+O(t^{-1+2\tau}),\\
=&q(x,t|D_{j_0})+O(t^{-1+2\tau}),\\
x(y,t)&=y+\ln\frac{M_1(y,t;\frac{i}{2})}{M_2(y,t;\frac{i}{2})}\\
&=y+2\ln T(i/2)+c(x,t|D_{j_0})+O(t^{-1+2\tau}),
\end{align*}
where $q(x,t|D_{j_0})$ and $c(x,t|D_{j_0})$ are defined in \eqref{3-solution-1}.\\
$\bullet$ For $\xi\in\left(-\frac{1}{4},2\right)$, we have
\begin{align*}
M(z)=(E(i/2)+E_1(z-i/2))M^{R}_{j_0}(z)T^{\sigma_{3}}(\frac{i}{2}) (1+(z-\frac{i}{2})T_{1})^{\sigma_{3}}+O(t^{-1}),
\end{align*}
Then, based on  \eqref{solution-2}, we can derive that
\begin{align*}
q(x,t)=&q(y(x,t),t)=\frac{1}{2i}\lim_{z\to\frac{i}{2}}\left(\frac{M_1(y,t;z)M_2(y,t;z)} {M_1(y,t;\frac{i}{2})M_2(y,t;\frac{i}{2})}-1\right)\frac{1}{z-\frac{i}{2}}\\
=&q(x,t|D_{j_0})+f_1t^{-1/2}+O(t^{-1}),\\
x(y,t)&=y+2\ln\frac{M_1(y,t;\frac{i}{2})}{M_2(y,t;\frac{i}{2})}\\
&=y+2\ln T(i/2)+c(x,t|D_{j_0})+f_2t^{-1/2}+O(t^{-1}),
\end{align*}
where $q(x,t|D_{j_0})$ and $c(x,t|D_{j_0})$ are defined in \eqref{3-solution-1}, and $f_1$ and $f_2$ are respectively defined as
\begin{align}\label{define-f-1-2}
 \begin{split}
   f_1=&B^{(1)}_1+B^{(1)}_2,\\
   f_2=&\frac{M^{R}_{j_0,1}B^{(1)}_1}{M^{R}_{j_0,2}B^{(1)}_2}.
 \end{split}
\end{align}
In \eqref{define-f-1-2}, $B^{(1)}_j$ means the $j$-th row of  $B^{(1)}$ and $B^{(1)}$ is defined in \eqref{B-1}.

Finally, we give the main results of this work.

\begin{thm}\label{Thm-1}
Suppose that the initial value $q_{0}(x)\in H^{4,2}(\mathbb R)$. Let $q(x,t)$ be the solution of CH equation \eqref{CH-equation}. The scattering data is denoted as $\mathcal{Z}=\{z_j, \gamma_j\}_{j=1}^{N}(Re z_j=0,~0<Im z_j<\frac{1}{2},~\gamma_j>0)$  generated from the initial values $q_{0}(x)$. Let $\xi=\frac{y}{t}$ and $q(x,t|D_{j_0})$ be the $N(j_0)$-soliton solution corresponding to scattering data $D_{j_0}=\{r(z)\equiv0, \{z_{j_0}, \gamma_{j_0}T^{-2}(z_{j_0})\}$. $j_0$ is defined in \eqref{j-0}. Then, as $t\to\infty$, we have the following results:
\begin{itemize}
  \item For $\xi\in\left(-\infty,-\frac{1}{4}\right)\cup(2,\infty)$,
  \begin{align}\label{result-1}
q(x,t)=&q(y(x,t),t)=q(x,t|D_{j_0})+O(t^{-1+2\tau}),\\
x(y,t)&=y+2\ln T(i/2)+c(x,t|D_{j_0})+O(t^{-1+2\tau}),
\end{align}
where $q(x,t|D_{j_0})$ and $c(x,t|D_{j_0})$ are defined in \eqref{3-solution-1}, $T(z)$ is shown in Proposition \ref{T-property}.
  \item For $\xi\in\left(-\frac{1}{4},2\right)$,
  \begin{align}\label{result-2}
q(x,t)=&q(y(x,t),t)=q(x,t|D_{j_0})+f_1t^{-1/2}+O(t^{-1}),\\
x(y,t)&=y+2\ln T(i/2)+c(x,t|D_{j_0})+f_2t^{-1/2}+O(t^{-1}),
\end{align}
where $q(x,t|D_{j_0})$ and $c(x,t|D_{j_0})$ are defined in \eqref{3-solution-1}, and $f_1$ and $f_2$ are defined in \eqref{define-f-1-2}.
\end{itemize}
\end{thm}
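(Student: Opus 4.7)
The plan is to invert the chain of transformations
\[
M(z) \;\longleftrightarrow\; M^{(1)}(z) \;\longleftrightarrow\; M^{(2)}(z) \;\longleftrightarrow\; M^{(3)}(z),\; M^{(2)}_{RHP}(z),\; E(z),\; M^R(z),
\]
evaluate the product at $z=i/2$ via the reconstruction formulas \eqref{solution-1}--\eqref{solution-2}, and collect the asymptotic orders from the estimates derived in Sections 4--9. Since the spectral variable $z=i/2$ lies on the imaginary axis and (for $t$ large) sits outside the lens region $\bar{\Omega}(\xi)$ and outside the local disks $\mathcal U_\xi$, we have $R^{(2)}(i/2)=\mathbb I$ and $M^{(2)}_{RHP}(z)=E(z)M^{R}(z)$ in a neighbourhood of $z=i/2$. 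Combining \eqref{Trans-1}/\eqref{Trans-1+1}, \eqref{Trans-2}, \eqref{delate-pure-RHP} and \eqref{decompose-Mrhp}, this yields, for $z$ near $i/2$,
\[
M(z) \;=\; M^{(3)}(z)\, E(z)\, M^{R}(z)\, T(z)^{\sigma_3}.
\]

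The first step is to replace $M^R(z)$ by $M^R_{j_0}(z)$ using the corollary that $M^R(z)=M^R_{j_0}(z)(\mathbb I+O(e^{-ct}))$ uniformly in $z$, which is justified by Proposition \ref{V2-Estimate-3+1} and the small-norm analysis for $E^{sol}$. Next, I would Taylor-expand each factor at $z=i/2$: for $T(z)$ use item $(f)$ of Proposition \ref{T-property}; for $M^R_{j_0}(z)$ use its explicit one-soliton form in Proposition \ref{Msol-prop}; for $E(z)$ use the expansion \eqref{7-19} with $E(i/2)$ and $E_1$ expanded via the stationary-phase contribution on $\partial\mathcal U_\xi$ computed in Section 8 (giving $B^{(0)}(\xi)$ and $B^{(1)}(\xi)$); for $M^{(3)}(z)$ use Lemmas \ref{prop-M3-Est} and \ref{prop-M3-Est-2} to extract $\mathbb I + O(t^{-1+2\tau})$ or $\mathbb I + O(t^{-1})$ respectively. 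Substituting these expansions into the reconstruction formulas
\[
x(y,t) = y + \ln\frac{M_1(y,t;i/2)}{M_2(y,t;i/2)}, \qquad q(y,t) = \frac{1}{2i}\lim_{z\to i/2}\!\left(\frac{M_1 M_2}{M_1(i/2) M_2(i/2)}-1\right)\!\frac{1}{z-i/2},
\]
the $T(i/2)^{\sigma_3}$ and $(\mathbb I+(z-i/2)T_1)^{\sigma_3}$ factors cancel in the quotient defining $q$, while they contribute the $2\ln T(i/2)$ shift in $x$.

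For the solitonic regions $\xi\in(-\infty,-\tfrac14)\cup(2,\infty)$, the local disks $\mathcal U_\xi$ are empty and Proposition \ref{v2-estimate-xi-1} forces $E(z)=\mathbb I+O(e^{-ct})$, so only the soliton factor $M^R_{j_0}$ survives and the error collapses to the $\bar\partial$-contribution $O(t^{-1+2\tau})$ from Lemma \ref{prop-M3-Est}; this produces \eqref{result-1}. For $\xi\in(-\tfrac14,2)$, the stationary-phase contribution $E(i/2)-\mathbb I$ and $E_1$ are of order $t^{-1/2}$ by the parabolic-cylinder analysis of Section 7.3, and Lemma \ref{prop-M3-Est-2} gives a $\bar\partial$-remainder of order $t^{-1}$. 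Plugging into the reconstruction formulas and computing the coefficient of $t^{-1/2}$ explicitly using \eqref{B-1}, one identifies $f_1$ and $f_2$ as in \eqref{define-f-1-2}, yielding \eqref{result-2}.

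The main obstacle is the bookkeeping at $z=i/2$: one must carefully track how the zeroth- and first-order Taylor coefficients of $E$, $M^R_{j_0}$ and $T$ combine in the quotient formula for $q$, since the leading soliton term $q(x,t|D_{j_0})$ is $O(1)$ and the first correction in $\xi\in(-1/4,2)$ is the $t^{-1/2}$ cross-term arising from $E_1 M^R_{j_0}(i/2)+E(i/2)\partial_z M^R_{j_0}|_{i/2}$ combined with $T_1$. The exponential dominance of all remaining contributions (from $\Sigma^{(cir)}$, $\tilde\Sigma(\xi)\setminus\mathcal U_\xi$, and $\Sigma'_{k\pm}$) is handled uniformly by Propositions \ref{v2-estimate-xi-1}, \ref{v2-estimate-xi-2} and \ref{V2-Estimate-3+1}, so that only $\partial\mathcal U_\xi$ and the $\bar\partial$-region contribute to the stated residual orders.
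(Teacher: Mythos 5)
Your proposal follows essentially the same route as the paper's Section 9: invert the composite transformation to write $M(z)=M^{(3)}(z)E(z)M^{R}(z)(R^{(2)}(z))^{-1}T^{\sigma_3}(z)$, take $z\to i/2$ along the imaginary axis where $R^{(2)}=\mathbb I$, replace $M^R$ by $M^R_{j_0}$ up to exponentially small error, and collect the $O(t^{-1+2\tau})$ (resp. $t^{-1/2}$ and $O(t^{-1})$) contributions from Lemmas \ref{prop-M3-Est}, \ref{prop-M3-Est-2} and the expansion of $E$ at $i/2$. The bookkeeping you describe, including the cancellation of the $T$-factors in the quotient for $q$ and their survival as the $2\ln T(i/2)$ shift in $x$, matches the paper's computation.
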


\begin{rem}
Theorem \ref{Thm-1}  need the condition $q_{0}(x)\in H^{4,2}(\mathbb{R})$ so that the inverse scattering transform possesses  well mapping properties \cite{r-bijectivity}. Moreover, 
the asymptotic results only depend on the $H^{4,2}(\mathbb{R})$ norm of $r$ in this work. So we restrict the initial potential $q_{0}(x)\in H^{4,2}(\mathbb{R})$.
\end{rem}


The above results show that soliton solutions associated with the poles near critical
lines have  significant contribution to the solution of CH equation as $t\to\infty$. The long-time asymptotic behavior \eqref{result-1} and \eqref{result-2} give the soliton resolution conjecture for the initial value problem of the CH equation.\\

\noindent{\bf Acknowledgments}
The authors  would like to express their gratitude to Professor  Engui Fan  for his  helpful discussions and useful comments on this work.
This work was supported by the National Natural Science Foundation of China under Grant No. 11975306, the Natural Science Foundation of Jiangsu Province under Grant No. BK20181351, the Six Talent Peaks Project in Jiangsu Province under Grant No. JY-059,
the 333 Project in Jiangsu Province, and the Fundamental Research Fund for the Central Universities under the Grant No. 2019ZDPY07.
\\



\bibliographystyle{plain}

\end{document}